\newtheorem{definition}{Definition}[section]
\newtheorem{theorem}{Theorem}[section]
\newtheorem{lemma}[theorem]{Lemma}
\newtheorem{proposition}[theorem]{Proposition}
\newtheorem{corollary}[theorem]{Corollary}
\newtheorem{remark}[theorem]{Remark}
\newtheorem{assumption}[theorem]{Assumption}
\newcommand{\bR}{\mathbb{R}}
\newcommand{\bP}{\mathbb{P}}
\newcommand{\bE}{\mathbb{E}}
\newcommand{\bF}{\mathbb{F}}
\newcommand{\bX}{\mathbb{X}}
\newcommand{\bN}{\mathbb{N}}
\newcommand{\bT}{\mathbb{T}}
\newcommand{\one}{\mathbbm{1}}
\newcommand{\X}{\mathbb{X}}
\newcommand{\cA}{\mathcal{A}}
\newcommand{\cB}{\mathcal{B}}
\newcommand{\cF}{\mathcal{F}}
\newcommand{\cP}{\mathcal{P}}
\newcommand{\cR}{\mathcal{R}}
\newcommand{\cQ}{\mathcal{Q}}
\newcommand{\cL}{\mathcal{L}}
\newcommand{\cH}{\mathcal{H}}
\title{Stackelberg stopping games}
\author[]{Jingjie Zhang}
\address{China School of Banking and Finance, University of International Business and Economics}
\email{jingjie.zhang@uibe.edu.cn}
\author[]{Zhou Zhou}
\address{School of Mathematics and Statistics, University of Sydney}
\email{zhou.zhou@sydney.edu.au}
\date{\today}
\keywords{Stackelberg stopping game, Dynkin game, time inconsistency, dynamic programming principle, precommitment strategy, equilibrium strategy}
\begin{document}

\begin{abstract}
We study a Stackelberg variant of the classical discrete-time Dynkin game, in which Player 1 (the leader) commits to a stopping strategy first and Player 2 (the follower) responds optimally. This leader–follower structure induces an optimal control problem for the leader and gives rise to intrinsic time-inconsistency.

We first clarify notions of precommitment and equilibrium strategies in the Stackelberg setting, and contrast them with the Nash equilibrium in the standard Dynkin game using a finite-horizon example. We then consider an infinite-horizon framework with a time-homogeneous Markov process on a general Polish state space. We characterize the leader's value function under randomized precommitment strategies and show that randomized exact equilibrium strategies may fail to exist via a counterexample. Motivated by this nonexistence phenomenon, we introduce an entropy-regularized Stackelberg stopping game. The regularization induces a continuous response rule and yields the existence of randomized regular equilibria. We further show that these regular equilibria induce $\varepsilon$-equilibria for the original Stackelberg stopping game when the regularization parameter is sufficiently small. In the finite-state setting, we also establish a limiting result as the regularization parameter converges to zero.
\end{abstract}

\maketitle

\section{Introduction}

In a classical Dynkin game, two players interact through their stopping strategies. The game ends when one of the players stops, and the payoffs depend on which player stops first. Since its introduction in \cite{Dynkin69}, the Dynkin game has been studied in various directions. Works on zero-sum Dynkin games include \cite{Dynkin69, Neveu-book-75} in discrete time, \cite{Bismut77, LM84, Morimoto84} in continuous time, \cite{Yasuda85, RSV01, TV02}  with randomized strategies, \cite{CK96} within non-Markovian settings, and \cite{BY17} under model uncertainty. The nonzero-sum version of the game has been studied in \cite{Morimoto86, Ohtsubo87, Nagai87} and more recently in \cite{HZ09, LS13, DFM18}, among others. It is worth noting that all the works mentioned above focus on Nash equilibrium in the sense that in the stopping games the two players choose their stopping strategies simultaneously. In many practical situations, however, strategic interactions are inherently asymmetric: one party may commit to a policy in advance, while the other responds optimally after observing this commitment, as in regulatory interventions, contract design, or oligopoly market with a dominant firm.  Motivated by this, we study a variant of the Dynkin game in which the two players determine their stopping strategies {\it sequentially}, giving rise to a Stackelberg (leader–follower) stopping game.
This framework provides a natural extension of standard Dynkin game problems, such as game options or convertible bonds, to settings with asymmetric timing.

Time inconsistency in Stackelberg games was first observed in \cite{MR475954}, where the author studies a dynamic game with one dominant player and multiple nondominant players. Under open-loop and closed-loop policies, the original plan may no longer be optimal as time progresses, leading to a temptation for the dominant player to deviate from the initial plan. As the author notes, ``only the feedback solution has the characteristic that the original plan will not be changed under replanning'' \cite{MR475954}. As we point out in Section~\ref{sec:concepts}, such feedback solution corresponds to our equilibrium strategy defined in Definition~\ref{def:consistent_equilibrium}. 
More recently, \cite{MR4781585} provides an alternative perspective on time inconsistency in Stackelberg games by reformulating the leader's problem as an optimal control problem for forward--backward stochastic differential equations.
The key feature is that the follower's optimal response, which depends on the entire strategy announced by the leader, enters the leader's problem endogenously. As a result, the leader's continuation optimization problem at the next step generally differs from the problem evaluated at the initial time. This leads to intrinsic time inconsistency even in the absence of non-exponential discounting or other exogenous sources of inconsistency. A detailed analysis of this phenomenon in a concrete Stackelberg game model is given in \cite{MR4557638}. In the present paper, the same mechanism appears in the Stackelberg stopping framework; see Proposition~\ref{prop:time_inconsistency} and Remark \ref{rmk:explain_inconsistency} for explicit illustrations.

In the literature of time inconsistency, the seminal work \cite{Strotz55} proposes three approaches to deal with time inconsistency. The first approach is to consider a precommitment strategy, a strategy that is optimal with respect to (w.r.t.)~the initial preference. 
The second approach is to use a naive strategy, under which the agent keeps solving the new problem based on her current preference and thus keeps updating her strategy. The third approach is to look for a time-consistent equilibrium strategy: given that all the future selves follow this strategy, the current self has no incentive to deviate. Recently there has been a lot of research on equilibrium strategies for time-inconsistent problems. See e.g., \cite{EL06,EP08, EMP12, BMZ14,MR4387700,HJZ12} and references therein for time-inconsistent control, and  \cite{CL18, CL20, HN18, HZ19,https://doi.org/10.1111/mafi.12293,MR4578541}, among others, for time-inconsistent stopping. Time-consistent equilibrium strategies for Stackelberg games have received relatively less attention. We refer to \cite{MR475954,MR4557638,MR4781585,MR3979301,MR4348773} and the included references for the related literature.

In this paper, we consider a Stackelberg version of the two-player nonzero-sum Dynkin game in discrete time. In the game, Player 1 (the leader, she) announces her (randomized) stopping strategy first, and then Player 2 (the follower, he) chooses his stopping strategy correspondingly. We do not impose any restriction on the payoff processes for both players and the game is not necessarily of war-of-attrition type. The underlying process is modeled by a time-homogeneous Markov process. We do not include any exogenous  source of time inconsistency (such as non-exponential discounting or non-linear functional of expectation) that commonly appears in a single-agent time-inconsistent problem. The absence of exogenous time inconsistency highlights that the time inconsistency in our framework arises intrinsically from the leader--follower structure of the stopping game.

We start with the finite-horizon setup for the purpose of motivation and concepts. We first consider pure stopping strategies and show in an deterministic example (Proposition \ref{prop:time_inconsistency}) that the leader's problem is time inconsistent. We then define and compare different notions of solutions, including leader's (pure) precommitment and time-consistent equilibrium strategies for the Stackelberg game, as well as the Nash equilibrium for the classical Dynkin game. We demonstrate via an example (Proposition \ref{prop:distinct_notions}) that these notions of solutions are distinct from each other, by showing that leader's initial strategies are different under different solution concepts. Furthermore, three classical solution concepts in dynamic Stackelberg games are contrasted with the precommitment and equilibrium strategies. Their connections and distinctions are emphasized in Remark~\ref{rmk:distinction}. Next, we consider randomized precommitment and time-consistent equilibrium strategies. We observe that randomized strategies may yield strictly larger payoffs for the leader than any pure strategy does (Remark \ref{r1}). We also present a deterministic two-period example (Proposition \ref{prop:precomit_random_nonexist}) indicating that an optimal precommitment strategy may not exist. The nonexistence of an optimal strategy is due to the discontinuity of the follower's best response w.r.t.~the leader's strategy. 

Motivated by the examples and results in the finite-horizon setup, we next consider an infinite-horizon time-homogeneous framework in which the payoff processes are exponentially discounted and the underlying Markov process takes values in a general Polish space.
We first investigate the leader's optimal value induced by randomized path-dependent (precommitment) strategies. The problem is formally defined in \eqref{eq:game_precomit} and the leader's problem is given by \eqref{eq:leader_problem_infT}. Since the leader's problem is time inconsistent, the dynamic programming principle (DPP) cannot be directly applied. Inspired by \cite{MR4557638,MR909450}, we view the follower's continuation utility as the state variable of the leader's value function. Combined with a careful analysis of the admissible range of the follower's utility, this allows us to recover a DPP for the leader's value function (Theorem~\ref{thm:v_bellman}).

We then study randomized equilibrium strategies for the Stackelberg stopping game where the players adopt randomized Markov stopping policies. The problem is formulated in \eqref{eq:game_exact_equilibrium}, for which we introduce two versions of equilibrium, the feedback equilibrium (Definition \ref{def:feedback_equilibrium_general}) and exact equilibrium (Definition \ref{def:infinite_equilibrium}). Their connection and distinction are discussed in Remark \ref{rmk:feedback_vs_exact}. Roughly speaking, the exact equilibrium is a more restricted version of feedback equilibrium that requires that the follower always chooses the earliest optimal stopping time. We construct an example (Proposition~\ref{prop:ex_inf_horz}) showing that exact equilibria may fail to exist even in the presence of randomized strategies. The nonexistence is caused by the discontinuity of the follower's best-response map with respect to the leader's strategy, since the follower's optimal response is binary and non-randomized. In the finite-state setting, we establish the existence of feedback equilibria  (Proposition \ref{prop:feedback}), but it remains an open question for the general Polish space.

Motivated by this difficulty, we introduce an entropy-regularized Stackelberg stopping game in which the follower's payoff is perturbed by an entropy term (see problem \eqref{eq:game_regular_equilibrium}). The regularization induces a continuous randomized response rule of logit type and yields a tractable regularized equilibrium framework. Entropy regularization and logit-type responses are widely used in optimization, learning, and quantal-response models, where they can be interpreted in terms of imperfect optimization, exploration, or information-processing effects (see, for example \cite{MCKELVEY19956} and \cite{JMLR:v21:19-144}). In the present paper, the regularization primarily serves to smooth the follower's best-response map and to obtain a well-posed equilibrium notion, regular equilibrium (Definition \ref{def:regular_equilibrium}), in the general Polish-space setting. The notions of exact equilibrium, feedback equilibrium, regular equilibrium, and $\varepsilon$-equilibrium correspond respectively to equilibrium concepts for the original Stackelberg game, its relaxed feedback formulation, the entropy-regularized game, and approximate equilibria for the original game.

Under the entropy regularization, we establish the existence of regular equilibrium strategies (Theorem~\ref{thm}). We further prove that, when the regularization parameter is sufficiently small, the resulting regularized equilibria induce $\varepsilon$-equilibria (Definition \ref{def:epsilon_equilibrium}) for the original Stackelberg stopping game (Proposition \ref{prop:approximation}). In the special case where the Markov process has a finite state space, we show that as the entropy parameter converges to zero, regular equilibria admit limit points corresponding to feedback equilibria for the original game (Proposition~\ref{prop:feedback}). However, the compactness and continuity arguments used in the finite-state setting do not extend to general Polish spaces, and the existence of (exact) feedback equilibria in the general Polish-space framework remains open (see Remark \ref{rmk:uncountable}). An application in the game option is provided in Section \ref{sec:application}.

Our paper makes a very novel and conceptual contribution to the research of stopping games. Previous works on Dynkin games focus on the existence and construction of Nash equilibrium under which players make decisions simultaneously. To the best of our knowledge, this is the first work that investigates the Stackelberg version of Dynkin games. Our paper introduces a general framework for Stackelberg stopping games in discrete time. We show that the leader--follower structure generates intrinsic time inconsistency and leads to new phenomena that do not arise in classical Dynkin games with simultaneous actions. In particular, we characterize the leader's value function under precommitment strategies, identify nonexistence phenomena for exact randomized equilibria, establish existence results for entropy-regularized equilibria in general Polish spaces, and derive approximation connections between regularized equilibria and the original stopping game.

Our paper also contributes to the literature of time-inconsistent problems. Most of the works on equilibrium strategies for time-inconsistent problems consider single-agent cases, and results for multi-agent settings are relatively limited. The investigation of equilibria for time-inconsistent games is conceptually richer, since players not only play against other agents but also their future selves, and thus the problem can be viewed as a two-layer game. Let us mention the works \cite{MR4397932,MR4783065} on time-inconsistent stopping games. \cite{MR4397932} studies a Dynkin game and \cite{MR4783065} analyzes a mean-field stopping game. In \cite{MR4397932,MR4783065} players take action simultaneously, the time inconsistency originates from non-exponential discounting, and time-consistent equilibria are investigated. In contrast, in our framework the time inconsistency comes from the leader-follower game structure, and both precommitment and equilibrium strategies are considered. The model and methods involved in our paper are also very different from those in \cite{MR4397932,MR4783065}.

The rest of the paper is organized as follows. Section 2 provides motivation and introduces the main concepts via a finite-horizon setting. We illustrate the time inconsistency of the Stackelberg stopping game with a deterministic example and introduce the notions of precommitment and equilibrium strategies, comparing them with the Nash equilibrium in the classical Dynkin game (Section~\ref{sec:finte_pure}). Further comparison with classical solution notions is discussed in Section~\ref{sec:concepts}. We then extend the discussion to randomized strategies (Section~\ref{sec:finite_random}). Section 3 focuses on randomized strategies in the infinite-horizon setup. In Section~\ref{sec:infinity_precommit}, we characterize the leader's value induced by randomized precommitment (path-dependent) strategies and show that the optimal value may not be attained due to the discontinuity of the follower’s response. 
Section~\ref{sec:infinity_equilibrium} discusses equilibrium strategies for the Stackelberg stopping game with randomized Markov policies. The definitions of feedback equilibrium, exact equilibrium and $\varepsilon$-equilibrium are given in this section,  and a counterexample is presented to show that exact randomized equilibria may fail to exist.   
Section~\ref{sec:regular} studies the entropy-regularized Stackelberg stopping game. We define regular equilibrium, establish its existence in the general Polish-space setting, and prove that regular equilibria induce $\varepsilon$-equilibria for the original Stackelberg stopping game. The finite-state case and an application to game options are discussed in Section~\ref{sec:special_case}. All proofs for Section~\ref{sec:infinite} are collected in Section~\ref{sec:proof}.

\section{The Finite-Horizon Stackelberg Stopping Game} \label{sec:finite}

\subsection{Pure Strategies and Motivation}\label{sec:finte_pure}
Denote $\bN:=\{1,2,\dotso\}$, $\bN_0:=\{0,1,2,\dotso\}$ and $[[s,t]]:=\{s,s+1,\dotso,t\}$ for $s,t\in\bN_0$ with $s\leq t$. Take $N,T\in\bN$. Let $X = (X_t)_{t\in \bN_0}$ be a time-homogeneous discrete-time Markov chain with the state space $\bX := \{1,2,\cdots,N\}$. The transition matrix of $X$ is denoted by $\pi = (\pi_{xy})_{x,y \in \bX}$. For $t \in [[0,T]]$ and $x\in \bX$, let $X^{t,x} = (X^{t,x}_s)_{s\in [[t,T]]}$ be the Markov chain starting from time $t$ with the initial state $X_t = x$.
Let $\mathbb{P}_{t,x}$ and $\mathbb{E}_{t,x}$ be the probability measure and expectation conditioned on $X_t = x$. Let $\bF^{X} := \{\cF^{X}_t\}_{t\in [[0,T]]}$ be the filtration generated by $X$. For each $t \in [[0,T]]$, we denote by $\bT_t$ the set of all $\bF^{X}$-stopping times $\tau$ with $t\leq\tau\leq T$. For $t = 0$, we simply write $\bT_0$ as $\bT$. Denote the path space of the Markov chain $X$ over time $[[t, T]]$ by 
\begin{equation*}
\Omega_{t} := \left\{\omega = (\omega_t, \cdots, \omega_T) \mid \omega_s \in \bX, \forall s = t, \cdots, T\right\}.
\end{equation*} 
For $t = 0$, we write $\Omega_0$ as $\Omega$ for simplicity. Let $f_i,g_i,h_i:[[0,T]]\times\X\mapsto\mathbb{R}$ be payoff functions for $i=1,2$.

Consider a discrete-time, finite-horizon, two-player stopping game in which both players observe the Markov chain $X$ and choose stopping times to maximize their expected payoffs. The stopping game has a finite horizon $T$. Let $\tau$ and $\rho$ be the stopping times chosen by Player 1 (leader, she) and Player 2 (follower, he), respectively. For each $t\in [[0,T]]$ and $x \in \bX$, given a stopping time $\tau \in \bT_t$ selected by Player 1, Player 2 responds with a stopping time $\rho \in \bT_t$ that maximizes his expected payoff:
\begin{equation*}
	J_2(t,x;\tau,\rho) : = \mathbb{E}_{t,x}[F_2(\tau,\rho)],
\end{equation*}
where the payoff function $F_2$ is given by
\begin{equation*}
	F_2(\tau, \rho) = f_2(\rho,X_{\rho})\mathbbm{1}_{\{\rho<\tau\}} + g_2(\tau,X_{\tau})\mathbbm{1}_{\{\tau<\rho\}} +h_2(\rho,X_{\rho})\mathbbm{1}_{\{\rho = \tau\}}.
\end{equation*}
For simplicity of the analysis, we assume that Player 2 always chooses the \emph{earliest} stopping time that achieves the maximum expected payoff. We denote this best response by $\rho^*=\rho^*(\tau)$, which is uniquely given by
\begin{equation*}
\rho^*(\tau) = \rho^*_{t,x}(\tau) := \inf\left\{s \ge t \mid F_2(\tau, s) = \sup_{\rho \in \bT_s} \bE\left[F_2(\tau, \rho)|\mathcal{F}_s^X\right]\right\}.
\end{equation*}
Given Player 2's optimal response $\rho^*$, Player 1 chooses $\tau$ to maximize her own expected payoff:
\begin{equation*}
	J_1(t,x;\tau,\rho^*) : = \mathbb{E}_{t,x}[F_1(\tau,\rho^*)],
\end{equation*}
with payoff function
\begin{equation*}
	F_1(\tau, \rho) = f_1(\tau, X_{\tau})\mathbbm{1}_{\{\tau<\rho\}} + g_1(\rho,  X_{\rho})\mathbbm{1}_{\{\rho<\tau\}} +h_1(\tau, X_{\tau})\mathbbm{1}_{\{\rho = \tau\}}.
\end{equation*}
The Stackelberg stopping game reduces to an optimization problem for the leader:
\begin{equation} \label{eq:leader_problem}
\sup_{\tau \in \bT_t} V_{t,x}(\tau),\quad\text{with}\quad V_{t,x}(\tau) :=  J_1(t,x; \tau, \rho^*(\tau)).
\end{equation}

\begin{remark}
If the two players make their decisions simultaneously, the game is then a classical nonzero-sum Dynkin game, a framework that has been extensively studied. In contrast, we consider a Stackelberg game setting, where Player 1 (the leader) commits to a strategy first, and Player 2 (the follower) responds after observing this strategy. To the best of our knowledge, this is the first study to examine such a Stackelberg (leader-follower) structure in the context of two-player stopping games.
\end{remark}

We define the optimal solution in Problem~\eqref{eq:leader_problem} as follows.

\begin{definition} \label{def:precom_finite_pure}
For each fixed $t\in [[0,T]]$ and $x\in \bX$, a stopping policy $\tau_{t,x}^* \in \bT_t$ is called a pure precommitment stopping strategy (w.r.t.~the initial time $t$) in Problem~\eqref{eq:leader_problem} if $V_{t,x}(\tau_{t,x}^*) = \sup_{\tau \in \bT_t} V_{t,x}(\tau)$.
\end{definition}

The term \emph{precommitment strategy} originates in the literature on time inconsistency. It refers to a strategy that is optimal when chosen at the initial time, but may cease to be optimal from the perspective of future selves. As we see in the following proposition, Problem ~\eqref{eq:leader_problem} exhibits time inconsistency. For each fixed $t\in [[0,T]]$ and $x\in \bX$, the stopping policy $\tau^*_{t,x}$ is optimal in a static sense. That is, the decision is made at time $t$ without taking into account the change of preferences in the future. Therefore, we refer to $\tau^*_{t,x}$ as the precommitment strategy.

\begin{proposition} \label{prop:time_inconsistency}
 Problem~\eqref{eq:leader_problem} may be time inconsistent. That is, $\tau^*_{t,x}(\omega) = \tau^*_{s, X_s^{t,x}}(\omega)$ for a.e. $\omega \in \{\tau^*_{t,x} \ge s\}$ may \emph{not} hold for some $x\in \bX$ and $s,t \in [[0,T]]$ with $s > t$.
\end{proposition}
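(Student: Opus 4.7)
The plan is to produce an explicit deterministic counterexample. I would take $N=1$ (so $X_t\equiv 1$ for all $t$) and horizon $T=2$, which makes every $\bF^X$-stopping time a deterministic constant in $\{0,1,2\}$. Both players' strategy spaces then contain only three elements each, and the payoff functions $f_i,g_i,h_i$ collapse to six real-valued sequences indexed by $\{0,1,2\}$. The problem reduces to a finite enumeration: for each candidate $\tau\in\{0,1,2\}$, compute the follower's best response $\rho^*(\tau)$ under the earliest-maximizer tie-breaking rule from the paper, record the induced leader payoff, and compare the three values; repeat this for the subgame starting at $s=1$ with strategy set $\{1,2\}$.

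The mechanism I have in mind is the classical Stackelberg commitment-vs-credibility trade-off. I would calibrate the follower's payoffs so that $f_2(0)$ is large enough that, whenever the leader commits to $\tau\geq 1$ at time $0$, the follower strictly prefers to preempt at $\rho=0$ and the game ends with leader payoff $g_1(0)$; meanwhile I would set $h_2(1)$ very large so that the follower's response to $\tau=1$ is $\rho=1$ instead (yielding leader payoff $h_1(1)$), and I would arrange $f_2(1)>h_2(2)$ so that in the time-$1$ subgame (where $\rho=0$ is no longer admissible) the follower's response to $\tau=2$ switches to $\rho=1$, yielding leader payoff $g_1(1)$. By choosing $g_1(0)$ strictly larger than $h_1(1)$ and $h_1(0)$, I force $\tau^*_{0,1}=2$; by choosing $h_1(1)$ strictly larger than $g_1(1)$, I force $\tau^*_{1,1}=1$.

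Since all stopping times in this model are constants, the event $\{\tau^*_{0,1}\geq 1\}$ has full probability, and comparing $\tau^*_{0,1}(\omega)=2$ with $\tau^*_{1,X_1^{0,1}(\omega)}(\omega)=\tau^*_{1,1}=1$ on this event directly exhibits the failure of time consistency asserted in the proposition. The only real work is the bookkeeping to verify that a single coherent list of twelve numerical payoff values simultaneously realizes all the required strict inequalities; this is a finite linear-inequality check with ample slack (e.g. $f_2(0)=100$, $f_2(1)=50$, $h_2(2)=40$, $h_2(1)=200$, the other $f_2,g_2,h_2$ small, together with $g_1(0)=100$, $h_1(1)=10$, $g_1(1)=0$ and the remaining $F_1$-entries small), so no conceptual obstacle arises.
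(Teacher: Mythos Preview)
Your proposal is correct and follows the same overall strategy as the paper: a deterministic counterexample with $N=1$ and $T=2$, reducing everything to a finite enumeration over $\tau\in\{0,1,2\}$ and $\{1,2\}$. Your numerical choices do produce $\tau^*_{0}=2\neq 1=\tau^*_{1}$, which is exactly what the proposition requires.

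The specific mechanism differs from the paper's in one respect worth noting. In the paper's example the follower's response to the precommitment $\tau^*_0=2$ is $\rho^*_0(2)=2$, so the game genuinely reaches $t=1$ and the leader faces a live temptation to deviate---this is the form of time inconsistency that motivates the subsequent equilibrium analysis. In your example the follower preempts at $\rho^*_0(2)=0$, so the game terminates at $t=0$ and the inconsistency exists only at the level of the leader's announced plan; the leader never actually arrives at $t=1$. Both versions prove the proposition as literally stated (which conditions only on $\{\tau^*_{t,x}\ge s\}$, not on the follower's stopping time), but the paper's variant better illustrates the phenomenon driving the rest of the paper.

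One minor wording issue: your prose first asserts that the follower preempts at $\rho=0$ for \emph{all} $\tau\ge 1$, then immediately overrides this for $\tau=1$ via the large $h_2(1)$. The two sentences taken together are inconsistent, but your actual numbers ($f_2(0)=100<h_2(1)=200$) implement the second reading, and the argument goes through with $\rho^*_0(1)=1$, $\rho^*_0(2)=0$.
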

\begin{proof}
We demonstrate this by constructing a deterministic example with finite horizon $T = 2$ and showing that $\tau^*_0 \ne \tau^*_1$ with $\tau^*_0 > 0$. In this deterministic example, the state space of the Markov chain $X$ degenerates to a singleton and we thus omit the subscript $x$ in the notation. The payoff functions $F_1$ and $F_2$ are determined by a pair of stopping time $\tau, \rho \in \{0,1,2\}$ and two sets of functions, $f_i(t), g_i(t)$ and $h_i(t)$ with $t\in \{0,1,2\}$, for $i \in \{1,2\}$. Specifically, both players' payoffs can be summarized by the following payoff matrices.

\[ F_1 = 
\begin{blockarray}{cccc}
 \rho = 0 & \rho = 1 & \rho = 2& \\
\begin{block}{(ccc)c}
 h_1(0) & f_1(0) & f_1(0)& \tau = 0 & \\
 g_1(0) & h_1(1) & f_1(1)& \tau = 1 &\\
 g_1(0) & g_1(1) & h_1(2)& \tau = 2 &\\
\end{block}
\end{blockarray}
\quad \quad \quad \quad \quad \quad F_2 = 
\begin{blockarray}{cccc}
 \rho = 0 & \rho = 1 & \rho = 2& \\
\begin{block}{(ccc)c}
 h_2(0) & g_2(0) & g_2(0)& \tau = 0 & \\
 f_2(0) & h_2(1) & g_2(1)& \tau = 1 &\\
 f_2(0) & f_2(1) & h_2(2)& \tau = 2 &\\
\end{block}
\end{blockarray}
 \]
 Suppose the following conditions hold (see Remark \ref{r4} for such an example):
 \begin{enumerate}
	\item[(i).] $g_2(0)>h_2(0)$, $h_2(2) > f_2(0) > h_2(1) > g_2(1)$ and $h_2(2) > f_2(1)$;
	\item[(ii).]  $h_1(1) > h_1(2) > \max\{f_1(0),g_1(0)\}$.
\end{enumerate}

At $t = 0$, the optimal response function $\rho^*_0(\tau)$ and the leader's value fuction $V_0(\tau)$ for each $\tau \in \{0,1,2\}$ are given as follows.
 	\begin{equation*}
 	\begin{split}
 	&g_2(0) > h_2(0) \Rightarrow \rho^*_0(0) = 1 \Rightarrow V_0(0) = f_1(0);\\ 
 	&f_2(0) > h_2(1) > g_2(1) \Rightarrow \rho^*_0(1) = 0 \Rightarrow  V_0(1) = g_1(0);\\ 
 	&h_2(2) > \max\{f_2(1) , f_2(0)\} \Rightarrow \rho^*_0(2) = 2 \Rightarrow  V_0(2) = h_1(2).
 	\end{split}
 	\end{equation*}

 Since $h_1(2) > \max\{f_1(0),g_1(0)\}$, by Definition~\ref{def:precom_finite_pure}, the precommitment strategy at $t = 0$ is $\tau^*_0 = 2$. Since $\tau^*_0 > 0$ and the follower's response $\rho^*_0(2) = 2 >0$, the game will proceeds to the next period $t = 1$.

 At $t = 1$, the leader and the follower repeat the above reasoning process and come to a different conclusion.
 	\begin{equation*}
 	\begin{split}
 	&h_2(1) > g_2(1) \Rightarrow \rho^*_1(1) = 1 \Rightarrow  V_1(1) = h_1(1);\\ 
 	&h_2(2) > f_2(1) \Rightarrow \rho^*_1(2) = 2 \Rightarrow  V_1(2) = h_1(2).
 	\end{split}
 	\end{equation*}  

 	Since $h_1(1) > h_1(2)$, the precommitment strategy at $t = 1$ is $\tau^*_1 = 1$, which is inconsistent with $\tau^*_0 = 2$ at $t = 0$. 

 	In this example, the leader's precommitment strategy at time $t = 0$ is to continue until the terminal time. However, upon reaching $t = 1$, the leader has an incentive to deviate from the initial plan, as stopping immediately yields a higher payoff.
\end{proof}

In the deterministic example above, time inconsistency arises inherently from the Stackelberg structure of the game. In contrast to the precommitment strategy, an \emph{equilibrium} strategy accounts for the changing preferences of future selves. By interpreting the optimal stopping problem as a game among  multiple temporal selves---each representing the decision maker at a different point in time. An equilibrium strategy is one in which the current self has no incentive to deviate, assuming all future selves adhere to it. We assume that each temporal self at $t \in [[0,T]]$ adopts a pure Markov stopping policy that determines when to stop, defined as follows.
\begin{definition} \label{def:pure_markov_policy}
A pure Makov stopping policy is defined by a function $p: [[0,T]] \times \bX \to \{0,1\}$, and for each $t \in [[0,T]]$ and $x \in \bX$, the induced stopping time $\tau^p_{t,x}$ is given by 
\begin{equation*}
\tau^p_{t,x} := \inf \{s \ge t \mid p(s, X^{t,x}_s) = 1\}.
\end{equation*}
We also require $p(T, \cdot) = 1$ so that the game always terminates at time $T$. The set of pure Markov stopping policies is given by $\{0,1\}^{[[0,T]] \times \bX}$.
\end{definition}

We formalize the notion of no-regret strategy as a subgame perfect Nash equilibrium in the leader's intrapersonal game across time as follows. 

\begin{definition} \label{def:consistent_equilibrium} A stopping policy $p^* \in \{0,1\}^{[[0,T]] \times \bX}$ is called a (time-consistent) pure equilibrium strategy if for each $t \in [[0, T-1]]$ and $x \in \bX$,
\begin{equation*}
V_{t,x}(\tau_{t,x}^{p^*}) = \sup_{p\in \{0,1\}^{[[0,T]] \times \bX}} V_{t,x}(\tau_{t,x}^{p \oplus_t p^*}),
\end{equation*}
where $p \oplus_t p^* \in \{0,1\}^{[[0,T]] \times \bX}$ is the policy that follows $p$ before time $t$ and $p^*$ after time $t$. Specifically, it is given by
\begin{equation*}
p \oplus_t p^*: (s,x) \mapsto \begin{cases}
p(s,x) & \text{ if } s \le t;\\ 
p^*(s,x) & \text{ if } s > t.
\end{cases}
\end{equation*}
\end{definition}

\begin{remark}
    The condition $V_{t,x}(\tau_{t,x}^{p^*}) = \sup_{p\in \{0,1\}^{[[0,T]] \times \bX}} V_{t,x}(\tau_{t,x}^{p \oplus_t p^*})$ precisely captures the equilibrium idea: at any time $t$,  the leader has no incentive to deviate from the policy $p^*$, provided that she reverts to $p^*$ from time $t+1$ onward. In continuous time, however, the appropriate notion of equilibrium strategy is considerably more subtle; see, for example, \cite{https://doi.org/10.1111/mafi.12293}. There, equilibrium conditions are typically characterized through local (first-order) variational arguments rather than discrete one-step deviations. For this reason, the techniques developed in the present discrete-time framework do not extend directly to the continuous-time setting, which we leave for future research.
\end{remark}

In the finite-horizon setting, a pure equilibrium strategy can be constructed via backward induction. By definition, an equilibrium strategy is Markovian and, in general, differs from a precommitment strategy. In what follows, we compare these two notions with the concept of a Nash equilibrium in the classical Dynkin game, which has been extensively studied. For completeness, we first provide the definition of a Nash equilibrium in the Dynkin version of our game.

\begin{definition} For each $t \in [[0,T]]$ and $x \in \bX$, a pair of stopping times $(\tau^*,\rho^*) = (\tau_{t,x}^*,\rho_{t,x}^*)$ is called a Nash equilibrium (w.r.t.~the initial time $t$) in the following Dynkin game,
\begin{align*}
& \text{Player 1: } \sup_{\tau \in \bT_t} J_1(t,x; \tau, \rho), \\ 
& \text{Player 2: } \sup_{\rho \in \bT_t} J_2(t,x; \tau, \rho),
\end{align*}
if 
\begin{equation*}
J_1(t,x; \tau^*, \rho^*) = \sup_{\tau \in \bT_t} J_1(t,x; \tau, \rho^*)  \quad \text{ and } \quad  J_2(t,x; \tau^*, \rho^*) =  \sup_{\rho \in \bT_t} J_2(t,x; \tau^*, \rho).
\end{equation*}
\end{definition}

\begin{proposition} \label{prop:distinct_notions}
The precommitment strategy, equilibrium strategy, and Nash equilibrium represent distinct solution concepts.
\end{proposition}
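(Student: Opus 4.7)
The plan is to construct a single deterministic finite-horizon example (say $T=2$, with $\bX$ degenerating to a singleton, as in the proof of Proposition~\ref{prop:time_inconsistency}) in which the leader's initial action takes three different values depending on the solution concept: one value under the precommitment strategy, a second under the time-consistent equilibrium strategy, and a third under the Nash equilibrium of the associated Dynkin game. Since all three solutions prescribe an action $\tau\in\{0,1,2\}$ at time $0$, showing three distinct initial actions suffices to prove the notions differ.

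Concretely, I would start from the payoff layout already set up in the proof of Proposition~\ref{prop:time_inconsistency}, which encodes each player's payoff through $3\times 3$ matrices in the variables $f_i(t),g_i(t),h_i(t)$. The strategy is then to impose inequalities on these parameters that force (a) the leader's precommitment optimum at $t=0$ to select one specific row (say $\tau^*_0=2$), (b) the backward-induction equilibrium policy to prescribe a different action at $t=0$ (say $\tau^{p^*}_0=1$, by first computing $p^*(1,\cdot)$ and then $p^*(0,\cdot)$ using the leader's value with the already-committed future self), and (c) the Nash equilibrium of the Dynkin game to yield yet a third initial action (say $\tau^*_0=0$, supported by Player~2's simultaneous best-response $\rho^*_0$). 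For each target action in each concept, the constraints translate into explicit linear inequalities among the $f_i,g_i,h_i$, and the main task is to check that the three groups of constraints are mutually consistent.

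For the Nash part I would specifically look for parameters where, at $t=0$, stopping immediately is simultaneously a best response for both players: Player~2 prefers $\rho=0$ if $h_2(0)>g_2(0)$ (when $\tau=0$), and Player~1 prefers $\tau=0$ if $h_1(0)\ge\max\{g_1(0),g_1(1)\}$ (against $\rho=0$). Combined with the conditions $(i)$ and $(ii)$ of Proposition~\ref{prop:time_inconsistency} --- which already give precommitment $\tau^*_0=2$ and equilibrium $\tau^{p^*}_0=1$ --- the extra inequalities for the Nash equilibrium involve $h_1(0), h_2(0)$ and a few ordering conditions, which I would separate cleanly and show are satisfiable (e.g.\ by writing down one explicit numeric assignment at the end).

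The main obstacle is the book-keeping: I must make sure the Stackelberg computation and the Dynkin computation use the same payoff data yet end up at genuinely different best responses. In particular, conditions $(i)$ of Proposition~\ref{prop:time_inconsistency} make $\rho^*_0(0)=1$ in the Stackelberg setting, so to make $(\tau,\rho)=(0,0)$ a Nash equilibrium I need to invoke the simultaneous-move tie-breaking at $\tau=\rho=0$ (the payoffs there are $h_1(0),h_2(0)$, independent of what Player~2 would do unilaterally to the leader's $\tau=0$). This asymmetry between ``responds after observing $\tau$'' and ``plays simultaneously'' is precisely the conceptual wedge I would exploit, and once the three sets of inequalities are aligned, the conclusion follows immediately from Definitions of precommitment, equilibrium, and Nash equilibrium.
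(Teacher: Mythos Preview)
Your overall plan---a deterministic $T=2$ example in which the three notions prescribe three distinct initial actions---is exactly the paper's approach. But your specific allocation (precommitment $\to 2$, equilibrium $\to 1$, Nash $\to 0$) contains an internal inconsistency.

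To make $(\tau,\rho)=(0,0)$ a Nash equilibrium you need $\rho=0$ to be Player~2's best response to $\tau=0$, i.e.\ $h_2(0)\ge g_2(0)$. But condition~(i) of Proposition~\ref{prop:time_inconsistency}, which you say you retain, explicitly imposes $g_2(0)>h_2(0)$. The ``simultaneous-move tie-breaking'' you invoke does not help: in a Nash equilibrium each player best-responds to the other's \emph{fixed} strategy, so against $\tau=0$ Player~2 compares $h_2(0)$ (if $\rho=0$) with $g_2(0)$ (if $\rho\ge 1$), exactly as in the Stackelberg follower's computation. There is no wedge between sequential and simultaneous play for this particular best-response calculation; the wedge exploited in the paper lies elsewhere.

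A second, smaller gap: conditions (i) and (ii) alone do not determine the equilibrium action at $t=0$. Once $p^*(1)=1$ is fixed, $p^*(0)$ depends on the sign of $f_1(0)-g_1(0)$, which (i)--(ii) leave open. The paper in fact adds $f_1(0)>g_1(0)$ (condition~(iii)) to force $\tau_0^{p^*}=0$, and then conditions (iv)--(v) to make $(\tau^*,\rho^*)=(1,0)$ a Nash equilibrium, arriving at the allocation precommitment $\to 2$, equilibrium $\to 0$, Nash $\to 1$. Your permutation could in principle be made to work, but you would have to drop $g_2(0)>h_2(0)$ from condition~(i), which changes $V_0(0)$ from $f_1(0)$ to $h_1(0)$ and forces you to redo the precommitment analysis as well.
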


\begin{proof}
Again we prove this proposition by a deterministic example. Consider the example in the proof of Proposition~\ref{prop:time_inconsistency}. In addition to Condition (i) and (ii), we assume that 
\begin{enumerate}
	\item[(iii).] $f_1(0)> g_1(0)$;
	\item[(iv).] $g_1(0)>h_1(0)$;
	\item[(v).] $h_1(1)>g_1(1)$ and $f_1(1) > h_1(2)$.
\end{enumerate}
(See Remark \ref{r4} for such an example.) Let us first construct an equilibrium strategy $p^* \in \{0,1\}^{[[0,T]] \times \bX}$ via backward induction. The set of pure Markov stopping policies degenerates to $\{0,1\}^{[[0,T]]}$ in the deterministic scenario. The associated stopping time $\tau^p_t = \inf\{s \ge t \mid p(s) = 1\}$ is the first time that $p$ equals one.

At $t = 1$, $p^*(1)$ satisfies 
\begin{equation*}
V_1(\tau_1^{p^*}) = \max\{V_1(1), V_1(2)\}.
\end{equation*}
By the same reasoning in precommitment strategy for this one-period game, $V_1(1) = h_1(1)>h_1(2) = V_1(2)$, and thus $p^*(1) = 1$ and $\tau_1^{p^*} = 1$. The follower's optimal response is $\rho^*_{1}(1) = 1$. 

At $t = 0$, $p^*(0)$ satisfies
\begin{equation*}
V_0(\tau_0^{p^*}) = \max\{V_0(0), V_0(\tau_1^{p^*})\}.
\end{equation*}
Since $V_0(0) = f_1(0)$, $V_0(\tau_1^{p^*}) = V_0(1) = g_1(0)$ and $f_1(0)> g_1(0)$ by Condition (iii), we have $V_0(\tau_0^{p^*}) = f_1(0)$, $p^*(0) = 1$ and $\tau_0^{p^*} = 0$.

Therefore $p^*(t) \equiv 1$ for all $t \in \{0,1,2\}$ is the equilibrium strategy, i.e., the equilibrium strategy is to stop immediately at each time period. In particular, w.r.t.~the initial time $0$, the leader will stop at time $0$ under the equilibrium strategy, but will continue until the end of the game under the precommitment strategy.

Next let us study the Nash equilibrium in this example. The best response of Player 2 to the strategy of Player 1 at $t = 0$ has been discussed in the proof of Proposition~\ref{prop:time_inconsistency}, and is given by 
\begin{equation*}
\rho^*_0(0) = 1, \rho^*_0(1) = 0, \rho^*_0(2) = 2.
\end{equation*}
For each $\rho \in \{0,1,2\}$, the optimal response of player 1 is the following.
\begin{equation*}
 	\begin{split}
 	&g_1(0) > h_1(0) \Rightarrow \tau^*_0(0) \in \{1,2\};\\ 
 	&h_1(1) > \max\{g_1(1), f_1(0) \} \Rightarrow \tau_0^*(1) = 1;\\ 
 	&f_1(1) > h_1(2) > f_1(0) \Rightarrow \tau_0^*(2) = 1.
 	\end{split}
 \end{equation*}
 By definition, $(\tau^*,\rho^*)$ is a Nash equilibrium if $\tau^* \in \rho_0^*(\tau^*)$ and $\rho^* \in \tau_0^*(\rho^*)$. Then we have that $(\tau^* = 1, \rho^* = 0)$ is a Nash equilibrium (w.r.t.~the initial time $0$), which is distinct from the pre-commitment strategy ($\tau^* = 2, \rho^* = 2$) and the equilibrium strategy ($\tau^* = 0, \rho^* = 1$).
\end{proof}

\begin{remark}
Notice that if condition (iv) does not hold, then $\tau_0^*(0) = 0$ and a Nash equilibrium in the nonzero-sum Dynkin game does not exists, whereas the precommitment and equilibrium strategies remain unchanged. This again highlights the difference between these solution concepts.
\end{remark}

\begin{remark}\label{r4}
One can easily find	an example that satisfies all conditions (i-vi). For example, 
 	 \begin{equation}\label{eg1}
 	 \begin{split}
 	 &f_1(0) = 3, f_1(1) = 5, f_1(2) = 4, f_2(0) = 3, f_2(1) = 3, f_2(2) = 4,\\
 	 &g_1(0) = 2, g_1(1) = 4, g_1(2) = 4, g_2(0) = 3, g_2(1) = 1, g_2(2) = 4,\\
 	 &h_1(0) = 1, h_1(1) = 5, h_1(2) = 4,h_2(0) = 2, h_2(1) = 2, h_2(2) = 4.
 	 \end{split}
 	 \end{equation}
\end{remark}

\subsection{Comparison with notions in classical game theory}\label{sec:concepts}

In classical dynamic Stackelberg games, three solution concepts are commonly considered: open-loop equilibrium, closed-loop equilibrium, and feedback equilibrium; see \cite{MR475954, simaan1973additional}. In this subsection, we introduce stopping-time counterparts of these concepts and contrast them with the two solution notions discussed earlier, namely precommitment and equilibrium.

\medskip

\noindent
\textbf{Open-loop equilibrium.}
We first describe the admissible policy sets of the two players. Define
\[
\cP := \left\{P: \omega \mapsto (P_0(\omega_0), P_1(\omega_0,\omega_1), \ldots, P_T(\omega)) \in \{0,1\}^{T+1} \ \middle|\ 
P_t \le P_{t+1}\ \forall t,\ P_T = 1 \right\}.
\]
There is a one-to-one correspondence between $\cP$ and the set $\bT$ of stopping times. Indeed, for any $\tau \in \bT$, the process $(\mathbf{1}_{\{\tau \le t\}})_{t\in [[0,T]]}$ belongs to $\cP$, and for any $P \in \cP$, the stopping time
\[
\tau^P := \inf\{t \ge 0 : P_t = 1\}
\]
belongs to $\bT$.

At time $0$, the leader’s open-loop policy set is $\cP$. The follower’s policy depends on the leader’s choice and is therefore described by a mapping $ P \mapsto Q(P)$. Denote
\[
\cQ := \{ Q : \cP \to \cP \}.
\]

\begin{definition}
A pair $(P^*,Q^*) \in \cP \times \cQ$ is called an open-loop Stackelberg equilibrium at time $0$ if for any initial state $x \in \bX$,
\[
\begin{aligned}
&J_1(0,x;\tau^{P^*},\tau^{Q^*(P^*)}) \ge J_1(0,x;\tau^{P},\tau^{Q^*(P)}), \quad \forall P \in \cP;\\
&J_2(0,x;\tau^{P},\tau^{Q^*(P)}) \ge J_2(0,x;\tau^{P},\tau^{Q(P)}), \quad \forall P \in \cP,\ \forall Q \in \cQ.
\end{aligned}
\]
\end{definition}

If $\tau_0^*$ is a precommitment strategy at time $0$, define
\[
P_t^* = \one_{\{\tau_0^* \le t\}}, 
\qquad 
Q_t^*(P) = \one_{\{\rho^*(\tau^P) \le t\}}, \quad \forall t \in [[0,T]]
\]
where $\rho^*(\tau^P)$ denotes the follower’s optimal response to $\tau^P$. Then $(P^*,Q^*)$ forms an open-loop Stackelberg equilibrium. On the other hand, an open-loop Stackelberg equilibrium does not in general impose a specific tie-breaking rule (such as choosing the earliest optimal stopping time), and can therefore be viewed as a broader concept than precommitment strategy.

\medskip

\noindent
\textbf{Closed-loop equilibrium.}
The set of pure Markov stopping policies corresponds to closed-loop policies for the leader:
\[
\cP^{cl} := \{0,1\}^{[[0,T]] \times \bX}:=\{p: [[0,T]] \times \bX \ni (t,x) \mapsto p(t,x) \in \{0,1\}\}.
\]
The follower’s closed-loop policy set is
\[
\cQ^{cl} := \{ q : \cP^{cl} \to \cP^{cl} \}.
\]

\begin{definition}
A pair $(p^*,q^*) \in \cP^{cl} \times \cQ^{cl}$ is called a closed-loop Stackelberg equilibrium if for any $x \in \bX$,
\[
\begin{aligned}
&J_1(0,x;\tau^{p^*},\tau^{q^*(p^*)}) \ge J_1(0,x;\tau^{p},\tau^{q^*(p)}), \quad \forall p \in \cP^{cl};\\
&J_2(0,x;\tau^{p},\tau^{q^*(p)}) \ge J_2(0,x;\tau^{p},\tau^{q(p)}), \quad \forall p \in \cP^{cl},\ \forall q \in \cQ^{cl}.
\end{aligned}
\]
\end{definition}

In deterministic settings as in the previous subsection example, open-loop and closed-loop equilibria coincide. However, in general stochastic environments, we have
$\cP^{cl} \times \cQ^{cl} \subset \cP \times \cQ$.

\medskip

\noindent
\textbf{Feedback equilibrium.}
Although the equilibrium strategy $p^*$ in Definition~\ref{def:consistent_equilibrium} is Markovian, the pair $(p^*,q^*)$ does not form a closed-loop equilibrium in the above sense. Instead, it corresponds to a \emph{feedback Stackelberg equilibrium}, where the follower’s action at time $t$ depends on both the current state and the leader’s current action.

\begin{definition}
    Denote \[
    \cQ^{fb}:= \{q : [[0,T]] \times \bX \times \{0,1\} \ni (t,x, p) \to q(t,x,p) \in \{0,1\}  \mid q(T,\cdot, \cdot)  = 1\}.
    \]
    A pair $(p^*,q^*) \in \cP^{cl} \times \cQ^{fd}$ is a feedback Stackelberg equilibrium for the Stackelberg stopping game at time $0$ if for any $t \in [[0,T-1]]$ and any $x \in \bX$,\[
    \begin{split}
        q^*(t,x,0) \in \arg\max_{q \in \{0,1\}} & q f_2(x) +  (1-q) \bE_{t,x}[F^*_2(t+1,X_1)]\\
        q^*(t,x,1) \in \arg\max_{q \in \{0,1\}} & q  h_2(x) +  (1-q) g_2(x)\\
        p^*(t,x) \in \arg\max_{p \in \{0,1\}} & p \left[ q^*(t,x,p) h_1(x) + (1- q^*(t,x,p))f_1(x)
        \right] \\
        &+ (1-p) \left[ q^*(t,x,p) g_1(x) + (1- q^*(t,x,p))\bE_{t,x}[F^*_1(t+1,X_1)] \right]
        \end{split}
    \]

    where \[
    \begin{split}
         F^*_2(t,x) = &  p^*_{t,x} \max\{h_2(x), g_2(x)\} + (1-p^*(t,x)) \max\{f_2(x), \bE_{t,x}[ F^*_2(t+1,X_1)] \} ,\quad \forall t \in [[0,T-1]];\\       
         F^*_1(t,x) = & \max_{p \in \{0,1\}} p \left[ q^*(t,x,p) h_1(x) + (1- q^*(t,x,p))f_1(x)
        \right] \\
        &\quad \quad \quad+ (1-p) \left[ q^*(t,x,p) g_1(x) + (1- q^*(t,x,p))\bE_{t,x}[F^*_1(t+1,X_1)] \right] ,\quad \forall t \in [[0,T-1]];\\
    F^*_i(T,x) =& h_i(x), \quad  i \in \{1,2\}.
    \end{split}
    \]
\end{definition}

\medskip

In the finite-horizon case, the feedback Stackelberg equilibrium is obtained via backward induction and coincides with the equilibrium notion introduced in Definition~\ref{def:consistent_equilibrium}.

\begin{remark} \label{rmk:distinction}
The notion of equilibrium in Definition~\ref{def:consistent_equilibrium} should not be interpreted as a subgame perfect Nash equilibrium between the leader and the follower. Instead, it corresponds to a subgame perfect equilibrium in an intrapersonal dynamic game among the leader’s successive selves, and thus captures the requirement of time consistency for the leader’s strategy. In classical terminology, this notion is most closely related to a feedback Stackelberg equilibrium. It differs from both open-loop and closed-loop Stackelberg equilibria and does not, in general, coincide with subgame perfect Nash equilibrium in two-player dynamic games.

It is worth emphasizing that this distinction is substantive, not merely terminological. In classical game theory, every subgame perfect Nash equilibrium is also a Nash equilibrium. In contrast, in the Stackelberg stopping framework considered here, a precommitment strategy can be viewed as a Nash equilibrium of the underlying two-player game, since neither player has an incentive to deviate given the other’s strategy. However, an equilibrium in the sense of Definition~\ref{def:consistent_equilibrium} need not be a Nash equilibrium of the two-player game: the leader may have an incentive at time $0$ to deviate to the precommitment strategy. Consequently, Proposition~\ref{prop:distinct_notions} is not a restatement of the classical result that a Nash equilibrium may fail to be subgame perfect; rather, it reflects the fundamentally different role played by time consistency in Stackelberg stopping problems.
\end{remark}

\subsection{Randomized Strategies}\label{sec:finite_random}

In this subsection, we study relaxed/randomized strategies within the Stackelberg stopping framework. We formalize these notions in the finite-horizon setting and illustrate them using the deterministic example introduced earlier. A more detailed theoretical analysis is developed in the infinite-horizon setup in Section 3.

Let $Y = (Y_t)_{t \in \bN_0}$ be a sequence of independent uniformly distributed random variables on interval $[0,1]$, which are also independent of the Markov chain $X$. For each sequence  $p = (p_0,p_1,p_2,\cdots) \in [0,1]^{\infty}$, define
\begin{equation*}
\tau^{p,Y} := \inf \{t \ge 0 \mid Y_t \le p_t\}. 
\end{equation*}
Then $\tau^{p,Y}$ is a stopping time w.r.t.~the filtration generated by $Y$. Each $p_t \in [0,1]$ represents the probability that the process will stop at $t$, given that it has not stopped before time $t$.

Now we consider randomized stopping policies that map the path $\omega = (\omega_0, \cdots, \omega_T)\in \Omega$ to a sequence $p = (p_0, \cdots, p_T) \in [0,1]^{T+1}$.

\begin{definition} \label{def:finite_random_policy}
A randomized stopping policy on time interval $[[0,T]]$ is an adapted process 
\begin{equation*}
P = P(\omega) := \left(P_0(\omega_0), \cdots, P_{T-1}(\omega_0, \cdots, \omega_{T-1}), P_T(\omega)\right), \quad \forall \omega \in \Omega,
\end{equation*}
with $P_T \equiv  1$.
Here the random variable $P_t$ represents the probability that the process will stop at time $t$, given that it has not stopped at time $t-1$. This conditional probability $P_t$ depends on the trajectory $\omega_0, \cdots, \omega_t$ of the Markov chain $X$. 

The induced stopping time of policy $P$ is given by 
\begin{equation*}
\tau^{P}:= \tau^{P, Y} := \inf \{t \ge 0 \mid Y_t \le P_t\},
\end{equation*}
which is a stopping time w.r.t.~the filtration generated by $X$ and $Y$. We denote the set of all randomized stopping policies on time interval $[[0,T]]$ by $\cR_{T}$.
\end{definition}

\begin{definition} \label{def:finite_random_markov_policy}
If each $P_t$ depends only on the current state $\omega_t \in \bX$ rather than the full path $\omega_0, \cdots, \omega_t$, then the stopping policy $P \in \cR_T$ degenerates to a function $p: [[0,T]] \times \bX \to [0,1]$, which is a natural generalization of the pure Markov stopping policy defined in Definition~\ref{def:pure_markov_policy}. Thus we call it a randomized Markov stopping policy. The set of all randomized Markov stopping policies on time interval $[[0,T]]$ is given by $[0,1]^{(T+1)\times N}$.

\end{definition}

\begin{remark}\label{rmk:shift_operator}
Each $P \in \cR_T$ can be described by a $(T+1)$-period multinomial tree with $N$ branches per time step. It can be decomposed into $N$ subtrees as follows. \begin{equation*}
	P = \left\{P_0(x) \oplus \theta_{x}\circ P\right\}_{x\in \bX},
\end{equation*}
where $P_0(x) \in [0,1]$ is the probability that the leader will stop at the current step given that $X_0 = x$, and $\theta_{x}\circ P = \left\{P_1(x,\cdot), P_2(x, \cdot, \cdot), \cdots, P_T\right\} \in \cR_{1:T}$ is the randomized path-dependent stopping strategy for the next step, given that the leader does not stop at $X_0 = x$. We use notation $\cR_{t:T}$ to denote the set of all randomized stopping policies on time interval $[[t,T]]$.

Here, $\theta_x$ denotes the one-step shift operator applied to a policy $P$, given that the current state is $x$. For simplicity, we denote the $t$-step shift operator by 
\begin{equation*}
\theta_{\omega_0, \omega_1, \cdots, \omega_{t-1}} := \theta_{\omega_{t-1}}\circ \cdots \circ \theta_{\omega_0},
\end{equation*}
which corresponds to restricting the policy to the subinterval $[[t,T]]$, conditional on the observed history $\omega_0, \omega_1, \cdots, \omega_{t-1}$. When it is unnecessary to emphasize the specific trajectory, we refer to the shifted policy on the time interval $[[t,T]]$, namely $\theta_{\omega_0, \cdots, \omega_{t-1}} \circ P$, simply as $P^{t}$ for notational convenience. 
\end{remark}

We now allow both players to adopt randomized stopping policies. To model this, let $Y = (Y_t)_{t \in \bN_0}$ and $Y' = (Y'_t)_{t \in \bN_0}$ be sequences of independent random variables, each uniformly distributed on $[0,1]$, and assume that both $Y$ and $Y'$ are independent of the underlying Markov chain $X$ and of each other. We denote the stopping policies of the leader and the follower by $P \in \cR_T$ and $Q \in \cR_T$, respectively. Then stopping times induced by these policies are given by 
\begin{equation*}
\tau^P = \tau^{P, Y}, \quad \tau^Q = \tau^{Q,Y'}.
\end{equation*} 
Intuitively, the sequences  $Y$ and $Y'$ serve as internal randomization devices that allow each player to make probabilistic stopping decisions in a way that can depend on both the history of the process X and their own private source of randomness. 

In the Stackelberg stopping game, the leader announces her policy $P \in \cR_T$ first, then the follower will respond after observing whether the leader stops at time $0$. That is the follower's response depends on both $P$ and the realization of random variable $Y_0$. If the leader stops at $t = 0$, then the follower solves the following problem. 
\begin{equation*}
W_S(0,x) := \sup_{q \in [0,1] } q h_2(0, x) + (1-q) g_2(0,x),
\end{equation*}
and the optimal response is given by 
\begin{equation*}
Q_S(0,x) = \one_{\{h_2(0,x) \ge g_2(0,x)\}}.
\end{equation*}
Here the subscript `S' stands for \emph{stop}. In what follows, we also use subscript `C' to indicate \emph{continue}.

If the leader does not stop at $t = 0$, then the follower solves the following problem.
\begin{equation*}
W_C(0,x; \theta_x \circ P) := \sup_{Q \in \cR_T } \bE_{t,x}[F_2(\tau^P, \tau^Q) \mid \tau^P > 0].
\end{equation*}

Note that the utility $W_C$ is a function of $\theta_x \circ P$, since the stopping time $\tau^P$ does not depend on $P_0(\cdot)$ given that $\tau^P > 0$. Moreover, for this fixed stopping policy $\theta \circ P \in \cR_{1:T}$, the utility of Player 2 satisfies the following Bellman's equation.

\begin{equation*}
W_C(0,x; \theta_x \circ P) = \sup_{q \in [0,1]} q f_2(0,x) + (1-q) \bE_{0,x}[W(1,X_1;\theta_x \circ P)], 
\end{equation*}
where $W(t,x;\tilde P)$ denotes the total utility of the follower at time $t$, given $X_t=x$ and the leader's strategy $\tilde P\in\mathcal{R}_{t:T}$. The optimal response is given by 
\begin{equation*}
Q_C(0,x; \theta_x \circ P) =  \one_{\{f_2(0,x) \ge \bE_{0,x}[W(1,X_1; \theta_x \circ P)]\}}.
\end{equation*}
The follower's value at time $0$ given $X_0=x$ is given by
\begin{equation*}
W(0,x; P) = P_0(x)W_S(0,x) +(1-P_0(x))W_C(0,x,\theta_x \circ P).
\end{equation*}

Repeating the above process, we have that for each $t \in [[0,T-1]]$, $y \in \bX$ and $P\in\mathcal{R}_T$, \[
\begin{split}
&W(t,y; P^{t}) = P_t(y)W_S(t,y) +(1-P_t(y))W_C(t,y,P^{t+1});\\
&W_S(t,y) = \max\{h_2(t,y), g_2(t,y)\}; \quad \quad   W_C(t,y; P^{t+1}) = \max\{f_2(t,y), \bE_{t,y}[W(t+1,X_{t+1}; P^{t+1})] \};\\
&Q_S(t,y) =  \one_{\{h_2(t,y) \ge g_2(t,y)\}};  \quad\quad  Q_C(t,y; P^{t+1})=  \one_{\{f_2(t,y) \ge \bE_{t,y}[W(t+1,X_{t+1}; P^{t+1})]\}}.
\end{split}\] 

The above reasoning shows that the follower's optimal response consists of two parts: $Q_S$ and $Q_C$. The stop-scenario response $Q_S$ depends only on time $t$ and current state $X_t$, while the continue-scenario response $Q_C$ depends on the leader's remaining policy $P^{t+1}$ in addition to  time $t$ and current state $X_t$. For simplicity, we write $Q^* = Q^*(P) = (Q_S, Q_C(P))$ as the total optimal response of the follower. Then the stopping time induced by $Q^*$ is given by
\begin{equation*}
\tau^{Q^*} := \inf\left\{t \ge 0 \mid Y'_t \le Q_S(t,X_t) \one_{\{\tau^P \le t\}} + Q_C(t,X_t; P^{t+1})\one_{\{\tau^P > t\}}\right\},
\end{equation*}

For leader's stopping policy $P$ defined on time interval $[[t,T]]$, the optimal response $Q^*$ by the follower can be defined in the same manner as before. To avoid cumbersome notation, we continue to use the symbol $Q^*$ to denote this response. With a slight abuse of notation, we define the value function
\begin{equation*}
V_{t,x}(P) := \bE_{t,x}[F_1(\tau^P, \tau^{Q^*(P)})], \quad  t\in [[0,T]], \,  x \in \bX,  \, P\in \cR_{t:T}.
\end{equation*}

As in the pure strategy case, the Stackelberg game can be formulated as an optimal control problem from the perspective of the leader:
\begin{equation} \label{eq:optimal_control_problm}
\sup_{P \in \cR_T} V_{0,x}(P).
\end{equation}

\begin{definition}
A randomized stopping policy $P^* \in \cR_T$ is called a randomized precommitment strategy if 
\begin{equation*}
V_{0,x}(P^*) = \sup_{P \in \cR_T} V_{0,x}(P),  \quad \forall x \in \bX.
\end{equation*}
\end{definition}

 Since the set of randomized stopping policies strictly contains the set of pure stopping policies, it is natural to expect that a randomized policy may yield a strictly higher utility for the leader than any pure policy does. On the other hand, as we will see in the proof of Proposition~\ref{prop:precomit_random_nonexist}, a randomized precommitment strategy may not exist. Intuitively, this nonexistence arises from the discontinuity of the follower’s best response mapping $Q^*(P)$ w.r.t.~the leader’s policy $P$. 

\begin{proposition} \label{prop:precomit_random_nonexist}
A randomized precommitment strategy for Problem~\eqref{eq:optimal_control_problm} may not exist. 
\end{proposition}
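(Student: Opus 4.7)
The plan is to build a small deterministic counterexample exhibiting a sharp discontinuity in the follower's best response $Q^*(P)$, so that $V_{0,x}(P)$ is discontinuous and its supremum lies on the ``bad'' side of the jump. I would work in the two-period, singleton-state setting $T = 2$ with $|\X| = 1$, so that a randomized policy reduces to a pair $(p_0, p_1) \in [0,1]^2$ (with $P_2 \equiv 1$ by convention). The key object to target is $Q_C(0; P^1) = \one\{f_2(0) \ge W(1; P^1)\}$, which is the indicator of whether an affine function of $p_1$ crosses the threshold $f_2(0)$. Tuning the payoffs so that $W_C(1) = f_2(0) < W_S(1)$ would pin the flipping threshold at $p_1^* = 0$: the indicator equals $1$ exactly at $p_1 = 0$ and equals $0$ for every $p_1 > 0$, a jump the leader cannot avoid.

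Concretely, I would first compute the follower's utilities by backward induction (singleton state, so all expectations disappear): $W(2) = h_2(2)$, $W_C(1) = \max\{f_2(1), h_2(2)\}$, $W_S(1) = \max\{h_2(1), g_2(1)\}$, and $W(1; P^1) = p_1 W_S(1) + (1-p_1) W_C(1)$. A clean choice such as $h_2(2) = f_2(0) = 1$, $g_2(1) > 1$, and $h_2(1) = f_2(1) = g_2(2) = 0$ realises $W_C(1) = 1 = f_2(0) < W_S(1) = g_2(1)$; the earliest-maximizer tie-break then forces $Q_C(0;P^1) = 1$ precisely at $p_1 = 0$ and $Q_C(0;P^1) = 0$ for every $p_1 > 0$. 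Setting $h_2(0) < g_2(0)$ yields $Q_S(0) = 0$, and the same $h_2, g_2, f_2$ choices give $Q_S(1) = 0$ and $Q_C(1; P^2) = 0$; this keeps the downstream follower behaviour \emph{constant} across the relevant range of $p_1$, which is crucial for a clean leader payoff formula.

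Next I would assign the leader's payoffs so that (i) the ``follower continues at $0$'' regime yields a payoff strictly larger than $g_1(0)$ (the payoff when the follower stops first at $0$), and (ii) within that regime the leader strictly prefers $p_1$ small. Fixing $p_0 = 0$ and using the responses above, a direct trace of the four possible stop orderings gives $V_0(P) = (1-p_1)\, h_1(2) + p_1\, f_1(1)$ for $p_1 \in (0,1]$ and $V_0(P) = g_1(0)$ at $p_1 = 0$. The choice $h_1(2) = 2$, $f_1(1) = 0$, $g_1(0) = 1$, $h_1(0) = f_1(0) = 0$ then yields $V_0(P) = 2 - 2p_1$ on $(0,1]$ and $V_0(P) = 1$ at $p_1 = 0$, so $\sup_{p_1 \in [0,1]} V_0(P) = 2$ is approached as $p_1 \downarrow 0$ but not attained.

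Finally I would rule out every remaining candidate maximizer: any policy with $p_0 > 0$ places mass $p_0$ on outcomes worth $h_1(0) = 0$ or $f_1(0) = 0$ and at most $(1-p_0)\cdot 2$ on the rest, hence gives at most $2(1-p_0) < 2$; the $p_0 = 0$ slice has already been exhausted. Therefore no $P \in \cR_T$ attains the supremum, proving non-existence. The main obstacle is the simultaneous satisfaction of the several inequalities controlling the follower's behaviour: the threshold must land at the boundary $p_1^\ast = 0$ (so that the leader's wishful $p_1 = 0^+$ limit is blocked by a single-point jump), the follower's subsequent responses $Q_S(1), Q_C(1)$ must remain pinned at $0$ throughout the relevant range so that $V_0(P)$ has the claimed affine form, and the alternative ``stop at $0$'' deviation must be dominated. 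The explicit payoff table above is engineered to do all three at once, and the conclusion then follows by inspection.
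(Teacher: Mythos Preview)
Your proposal is correct and follows essentially the same approach as the paper: a deterministic two-period ($T=2$, $|\bX|=1$) counterexample in which the follower's best response $Q_C(0;P^1)=\one_{\{f_2(0)\ge W(1;P^1)\}}$ jumps discontinuously in $p_1$, placing the leader's supremum on the wrong side of the jump. The only cosmetic difference is where the jump sits: the paper reuses its earlier coefficients (those of Remark~\ref{r4}), obtaining $V_C(0;P_1)=4+P_1$ for $P_1<\tfrac12$ and $V_C(0;P_1)=2$ for $P_1\ge\tfrac12$, so the supremum $4.5$ is approached as $P_1\uparrow\tfrac12$; your coefficients are tuned so that $W_C(1)=f_2(0)$ exactly, pushing the threshold to the boundary $p_1=0$ and giving $V_C(0;p_1)=2-2p_1$ on $(0,1]$ with a drop to $g_1(0)=1$ at $p_1=0$, so the supremum $2$ is approached as $p_1\downarrow 0$. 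Both constructions hinge on the same earliest-maximizer tie-break at the threshold, and your explicit check of the $p_0>0$ deviation is a nice completeness touch that the paper leaves implicit.
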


\begin{proof}
We establish this by presenting a counterexample. Specifically, we revisit the simplified deterministic example from Section~\ref{sec:finte_pure} with the coefficients given in \eqref{eg1}. The set of randomized stopping policies now degenerates to $[0,1]^2$ and the leader aims to find $(P_0, P_1) \in [0,1]^2$ that maximizes her utility. Denote the uility of the leader (resp. follower) when the leader has stopped and has not stopped at time $t$ by $V_S(t)$ and $V_C(t)$ (resp. $W_S(t)$ and $W_C(t)$), respectively. Denote the expected utility of the leader (resp. follower) at time $t$ by $V(t)$ (resp. $W(t)$). We have 
\begin{equation*}
V(t) = P_t V_S(t) +(1-P_t) V_C(t) \text{ and  } W(t) = P_t W_S(t) +(1-P_t) W_C(t).
\end{equation*}

If the leader stops at $t = 0$, the follower chooses between $h_2(0)$ and $g_2(0)$. Since $h_2(0)<g_2(0)$, the follower will continue and the utility of the leader is $V_S(0) = f_1(0) = 3$.

If the leader does not stop at $t = 0$, the follower chooses between $f_2(0)$ and the expected payoff at $t = 1$, which is $W(1) = P_1 W_S(1) +(1-P_1) W_C(1)$. Then the leader's utility is 
\begin{equation*}
V_C(0) = g_1(0) \one_{\{f_2(0) \ge W(1)\}} + V(1) \one_{\{f_2(0) < W(1)\}}.
\end{equation*}

When both the leader and the follower proceed to $t = 1$, since $h_2(1) > g_2(1)$, the follower will stop if the leader stops at $t = 1$. Thus $W_S(1) = h_2(1)$ and $V_S(1) = h_1(1)$. If the leader continues at $t = 1$, the follower chooses between $f_2(1)$ and $h_2(2)$, which implies $W_C(1) = h_2(2)$ and $V_C(1) = h_1(2)$.

Therefore,
\begin{equation}\label{eq:v_not_continuous}
\begin{split}
V_C(0) = V_C(0; P_1)& = g_1(0) \one_{\{f_2(0) \ge W(1)\}} + (P_1 h_1(1) +(1-P_1)h_1(2)) \one_{\{f_2(0) < W(1)\}}\\ 
& = 2 \cdot \one_{\{3 \ge 2P_1+4(1-P_1) \}} + (5P_1 +4(1-P_1)) \one_{\{3 < 2P_1+4(1-P_1)\}}.
\end{split}
\end{equation}
Notice that $V_C(0; P_1)$ has a jump at $P_1 = 0.5$. We have $V_C(0, 0.5) = 2$, while $\lim_{P_1 \to 0.5^-} V_C(0, P_1) = 4.5$, which is higher than the payoff when using pure precommitment strategy, $F_1(2,2) = 4$.
\end{proof}

\begin{remark}\label{r1}
The above example shows that a randomized strategy may yield strictly higher utility than the pure precommitment strategy does, although the randomized precommitment strategy may not be attainable.
\end{remark}

In contrast to the randomized precommitment strategy, the existence of a randomized equilibrium strategy---a generalization of the pure equilibrium strategy, as defined in the following paragraph---is guaranteed in the finite-horizon setting.

\begin{definition}
A randomized Markov stopping policy $p \in [0,1]^{(T+1)\times N}$ is called a randomized equilibrium strategy if for each $ t \in [[0, T-1]]$,
\begin{equation*}
V_{t,x}(p^t) \ge  V_{t,x}(p'_t\oplus p^{t+1}), \quad \forall x \in \bX, \, \forall p'_t = \{p'_t(y)\}_{y \in \bX} \in [0,1]^{N}
\end{equation*}
where 
\begin{equation*}
p^t := (p(t, \cdot), \cdots, p(T,\cdot))
\end{equation*}
is the restriction of policy $p$ on time interval $[t,T]$, and 
\begin{equation*}
p_t' \oplus p^{t+1} := (p'_t(\cdot), p(t+1, \cdot), \cdots, p(T,\cdot))
\end{equation*}
is a one-step deviation of policy $p^t$ on time interval $[t,T]$.
\end{definition}

\begin{remark}
In the finite-horizon case, the randomized equilibrium strategy can be derived via backward induction. At each time $t$ with $X_t = x$, the leader solves a linear optimization problem of the form $\sup_{p_t(x) \in [0,1]} p_t(x) V_S(t,x) + (1-p_t(x))V_C(t,x; p^{t+1})$. As a result, the randomized equilibrium strategy degenerates to a pure equilibrium strategy.

This is no longer true in the infinite-horizon setting; see Section~\ref{sec:infinity_equilibrium} for details.
\end{remark}

\section{The Infinite-Horizon Stackelberg Stopping Game} \label{sec:infinite}

Motivated by the results and examples in the finite-horizon setting, we undertake a comprehensive analysis of both the precommitment and equilibrium strategies with randomization to address time inconsistency in the Stackelberg stopping game under an infinite-horizon framework.  Moreover, we generalize the Markov chain by extending its state space from a finite set to an uncountable one. There are three main reasons for focusing on the infinite-horizon case.

First, the time-homogeneous structure of the Markov chain and the use of exponential discounting simplify the notation and allow for a clearer investigation of the time-inconsistent nature of the Stackelberg game. 

Second, the analysis of precommitment strategies in the infinite-horizon setting closely parallels that of the finite-horizon case, making it a natural extension.

Third, the study of equilibrium strategies in the infinite-horizon case leads to novel and nontrivial challenges, particularly due to the need for fixed-point arguments. In contrast, equilibrium strategies in the finite-horizon setting can be constructed via backward induction, rendering the analysis relatively straightforward.

Let us first state the infinite-horizon set-up. 
Consider a Polish space $\bX$ and denote by $\cB(\bX)$ the family of Borel sets in $\bX$. Let $X = (X_t)_{t\in \bN_0}$ be a time-homogeneous Markov chain taking values in $\bX$. The transition kernel of $X$ is denoted by $\Pi$, i.e., 
\begin{equation*}
    \bP(X_{t+1} \in A | X_t = x) = \int_A \Pi(x, dy), \quad \forall A \in \cB(\bX), x \in \bX, t \in \bN_0.
\end{equation*}
Denote by $\mathbb{P}_x$ and $\mathbb{E}_x$ the measure under which $X_0 = x \in \mathbb{X}$ a.s.~and the associated expectation, respectively. The canonical space of the Markov chain on infinite horizon is denoted by
\begin{equation*}
\Omega := \{\omega = (\omega_0, \omega_1, \cdots) \mid \omega_t \in \bX, \forall t \in \bN_0\},
\end{equation*}
and the filtration generated by $\{X_t\}_{t\in \bN_0}$ is denoted by $\cF^{X}$.

In the Stackelberg stopping game, the leader (Player 1) and the follower (Player 2) choose their stopping strategies to maximize their expected payoff, which depends on the stopping times of both players. Denote the stopping time of the leader and the follower by $\tau$ and $\rho$, respectively. Denote 
\begin{equation} \label{eq:F1F2}
\begin{split}
&F_1(\tau, \rho) := f_1(X_{\tau})\one_{\{\tau<\rho\}} + g_1(X_{\rho})\one_{\{\rho<\tau\}} + h_1(X_\tau)\one_{\{\rho=\tau\}},\\ 
&F_2(\tau, \rho) := f_2(X_{\rho})\one_{\{\tau>\rho\}} + g_2(X_{\tau})\one_{\{\rho>\tau\}} + h_2(X_\rho)\one_{\{\rho=\tau\}},
\end{split}
\end{equation}
where $f_i, g_i, h_i, i \in \{1,2\}$ are Borel measurable real-valued functions on $\bX$. 

For the remainder of the paper, we impose the following two assumptions.

\begin{assumption}\label{asp:reference_prob}
There exists a common reference probability $\mu$ on the measurable space $(\bX, \cB(\bX))$ such that for any $x \in \bX$, the transition kernel $\Pi(x, \cdot)$ admits a density function $\pi(x,\cdot)$ w.r.t.~$\mu$. That is,
\[
\Pi(x, dy) = \pi(x,y)\mu(dy).
\]
By definition, the density function family $\{\pi(x, \cdot)\}_{x\in \bX}$ is uniformly bounded in the space $L^{1}(\bX,\mu)$, with $
 \|\pi(x, \cdot)\|_{L^{1}(\bX,\mu)}  = 1$ for each $x\in \bX$.
\end{assumption}

\begin{assumption} \label{asp:bounded_fgh}
The functions $f_i,g_i,h_i, i \in \{1,2\}$ are bounded, i.e.,
\[
K:=\sup_{x\in \bX} \sum_{i\in \{1,2\}}|f_i(x)| + |g_i(x)| + |h_i(x)| < \infty.
\]
\end{assumption}
As shown in the previous section, the leader-follower structure in the Stackelberg stopping game introduces time inconsistency into the leader’s optimization problem. In the infinite-horizon setting, despite time-homogeneity, it becomes necessary to consider path-dependent stopping policies. We revisit the concept of randomized stopping policy and randomized Markov stopping policy in Definition~\ref{def:finite_random_policy} and Definition~\ref{def:finite_random_markov_policy}, and formulate them in the infinite-horizon framework as follows.

\begin{definition}
A randomized (path-dependent) stopping policy is a stochastic process adapted to $\cF^{X}$,
\begin{equation*}
P = P(\omega) := (P_0(\omega_0), P_1(\omega_0, \omega_1), \cdots, P_t(\omega_0, \cdots, \omega_t), \cdots) \in [0,1]^{\bN_0}, \quad \forall \omega \in \Omega.
\end{equation*}
Here the random variable $P_t$ represents the probability that the process will stop at time $t$, given that it has not stopped by time $t-1$. This conditional probability $P_t$ depends on the trajectory $(\omega_0, \cdots, \omega_t)$ of the Markov chain $X$. Denote the set of all randomized stopping policy by $\cR$.
\end{definition}

We endow the set of randomized stopping policies $\cR$ with the topology induced by the distance function
\begin{equation}\label{eq:distance}
d(P,P'):= \sum_{t=0}^\infty\frac{1}{2^t} \int_{X^{t+1}}|P_t(y_0,y_1,\dotso,y_t)-P'_t(y_0,y_1,\dotso,y_t)| \mu(d y_0) \cdots \mu(d y_t),\quad P,P'\in\mathcal{R}.
\end{equation}
Under such topology, $\cR$ is a Polish (complete separable metric) space.

%

As in the finite-horizon case, we assume that $Y = (Y_t)_{t \in \bN_0}$ and $Y' = (Y'_t)_{t \in \bN_0}$ are the private sources of randomness for the leader and follower, respectively. They are two independent sequences of independent uniformly distributed random variables on $[0,1]$, which are also independent of $X$. The stopping time induced by the stopping policy $P \in \cR$ and the private randomness source $Y$ is given by 
\begin{equation} \label{eq:tau_bigP}
\tau^P = \tau^{P,Y} := \inf\{t\ge 0 \mid Y_t \le P_t(X_0, \cdots, X_t)\},
\end{equation}
which is a stopping time w.r.t.~the filtration generated by $X$ and $Y$.

Since the follower makes his move based on the information of both the leader's strategy and the leader's current state, stopping or continuing.  the follower's strategy $Q$ consists of two parts, $Q_S \in \cR$ and $Q_C \in \cR$, which represent the path-dependent randomized stopping policies when the leader stops and when the leader continues at the current step, respectively. Thus we denote the set of the follower's policies by $\cR^2$. Specifically, the stopping time corresponding to the follower's policy $Q = (Q_S, Q_C) \in \cR^2$ is given by 
\begin{equation} \label{eq:tau_bigQ}
\tau^{Q_S, Q_C} =  \tau^{Q,Y'} :=\inf\{t\ge 0 \mid Y'_t \le Q_{S,t}(X_0, \cdots, X_t) \one_{\{\tau^P \le t\}} + Q_{C,t}(X_0, \cdots, X_t) \one_{\{\tau^P > t\}} \}.
\end{equation}
Note that $\tau^Q$ is a stopping time with respect to the filtration generated by $X,Y'$ and the process $\left(\one_{\{\tau^P \le t\}}\right)_{t \in \bN_0}$.

\begin{definition} \label{def:infinite_markov_policy}
A randomized Markov (stationary) stopping policy is a Borel measurable function $p_\cdot: \bX \to [0,1]$. The value $p_x$ represents the probability that the process will stop at the current time $t$ with current state $X_t = x$, given that it has not stopped by time $t-1$, for all $ t\in \bN_0$ and $x \in \bX$. 
Denote the set of all randomized Markov stopping policies by $\cR_0$.
\end{definition}

Similar to , the induced stopping time of the leader's and the follower's Markov policy $p \in \cR_0$ and $(r,q) \in \cR_0^2$  are given by 
\begin{align}
&\tau^p = \tau^{p,Y} := \inf\{t \ge 0 \mid Y_t \le p_{X_t}\}; \label{eq:tau_small_p}\\ 
&\tau^{r,q} = \tau^{r,q,Y'} :=\inf\{t\ge 0 \mid Y'_t \le r_{X_t} \one_{\{\tau^p \le t\}} + q_{X_t} \one_{\{\tau^p > t\}} \}.\label{eq:tau_small_q}
\end{align}

We continue to use the notation of the one-step shift operator $\theta_x$ and the more general $t$-step shift operator $\theta_{\omega_0, \cdots,\omega_{t-1}}$ as introduced in Remark~\ref{rmk:shift_operator}.

\begin{remark} \label{rmk:shift_operator_inf}
We generalize the observation in Remark~\ref{rmk:shift_operator}. Each $P \in \cR$ can be represented as an infinite-horizon multinomial tree with $|\bX|$ branches per time step. Accordingly, $P$ admits the following decomposition:. \[
	P = \left\{P_0(x) \oplus \theta_{x}\circ P\right\}_{x\in \bX},
\]
where $P_0(\cdot)  \in \cR_0$ specifies the probability that the leader stops at the current step, and for each $x \in \bX$, $\theta_{x}\circ P = \left\{P_1(x,\cdot), P_2(x, \cdot, \cdot), \cdots\right\} \in \cR$ is the randomized stopping policy from the next step onward, conditional on the leader not stopping and $X_0 = x$. 

Note that for each $P \in\cR$, the mapping $ x \mapsto \theta_{x} \circ P$ defines a Borel measurable function from $\bX$ to $\cR$. Conversely, given any Borel measurable mapping \[\mathbf{P}: \bX   \ni  x \mapsto \mathbf{P}(x) = (\mathbf{P}_0(x)(\omega_0, \omega_1, \cdots),\mathbf{P}_1(x)(\omega_0, \omega_1, \cdots), \cdots) \in \cR,\] together with any $P_0 \in \cR_0$, we can construct a stopping policy in $\cR$ by pasting $P_0$ and $\mathbf{P}$ as $\left\{P_0(x) \oplus \mathbf{P}(x)\right\}_{x\in \bX} \in \cR.$
\end{remark}

For convenience, we introduce the notation
\begin{equation*}
    \cR^{\bX} := \{ \textbf{P}: \bX \to \cR  \text{ is Borel measurable.}\}
\end{equation*}
Throughout the remainder of the paper, we use boldface $\mathbf{P}$ to denote elements of $\cR^{\bX}$, $P$ to denote a stopping policy in $\cR$, $P_0$ (or $p$) to denote a Markov stopping policy in $\cR_0$, and $P_0 \oplus \mathbf{P}$ to denote the pasting of $P_0 \in \cR_0$ and $\mathbf{P} \in \cR^{\bX}$.

The proofs of the results in this section are deferred to Section \ref{sec:proof}.


\subsection{Precommitment Strategy}\label{sec:infinity_precommit}

We begin by formulating the Stackelberg stopping game in which both players adopt randomized stopping policies from the set $\cR$. Although the formulation of the Stackelberg stopping game in the infinite-horizon setting closely parallels that of the finite-horizon case, we find it useful to restate the model here to emphasize the time-homogeneous structure and to introduce notation that will be used in the subsequent analysis. With a slight abuse of notation, the objective functions $J_1$ and $J_2$ in Section \ref{sec:infinity_precommit} and Section \ref{sec:infinity_equilibrium} are functions of stopping policies $(P,(Q_S, Q_C))$ and $(p, (r,q))$, respectively, while $J_1$ and $J_2$ are functions of stopping times $(\tau, \rho)$ in Section \ref{sec:finite}.

The infinite-horizon Stackelberg stopping game is characterized by the following objective function pair $(J_1, J_2)$:
\begin{equation} \label{eq:game_precomit}
    \begin{split}   
          \text{Follower: } & \sup_{(Q_S, Q_C) \in \cR ^2} J_2(x, P, (Q_S, Q_C));\\
        \text{Leader: }&  \quad\,\,\, \sup_{P \in \cR } \quad J_1(x, P, (Q_S^*, Q_C^*)),
    \end{split}
\end{equation}
with \begin{align*}
  &J_i(x, P, (Q_S, Q_C)) := \bE_x\left[ \delta_i^{\tau^{P} \wedge \tau^{(Q_S, Q_C)}} F_i(\tau^{P}, \tau^{(Q_S, Q_C)})\right], \quad i \in \{1,2\}, x\in \bX,  ;\\
  & (Q_S^*, Q_C^*) = \arg\max_{(Q_S, Q_C) \in \cR^2} J_2(x, P, (Q_S, Q_C)),
\end{align*}
where $\delta_1, \delta_2 \in (0,1)$ are the discount factors of the leader and the follower, respectively, and $\tau^P$, $\tau^{Q_S,Q_C}$, and $F_i$ are defined in \eqref{eq:tau_bigP}, \eqref{eq:tau_bigQ}, and \eqref{eq:F1F2}, respectively.
    
If we further assume that the follower always chooses the earliest optimal stopping time, then the follower's optimal response $Q^* = (Q^*_S, Q_C^*)$ is a function of the leader's strategy $P$. Denote this response function by $Q^*(P)$, the above Stackelberg stopping game \eqref{eq:game_precomit} can be reduced  to the following optimization problem for the leader,
\begin{equation}\label{eq:leader_problem_infT}
    \sup_{P \in \cR} V(x,P), \text{ with } V(x, P) := J_1(x, P, Q^*(P)).
\end{equation}

As discussed in Section \ref{sec:finite}, the problem \eqref{eq:leader_problem_infT} is time inconsistent. The solution to this problem is defined as precommitment strategy (see Definition \ref{def:infinite_precommit}) and characterized in Proposition \ref{thm:v_bellman}.
\subsubsection{The follower's problem}

Denote by $W: = W(x,P)$ the follower's maximal expected payoff function before observing the leader's state (i.e., stopping or continuing) at the current time, which depends on current state $X_0 = x$ and the leader's policy $P \in \cR$. Then we have 
\begin{equation*}
W(x,P) = P_0(x) W_S+ (1-P_0(x)) W_C,
\end{equation*}
where $W_S$ and $W_C$ are the follower's maximal expected payoff functions when the leader stops and when the leader continues at the current step, respectively. Specifically, 
\begin{align*}
&W_S := \sup_{Q \in \cR^2} \bE_x\left[\delta_2^{\tau^P\wedge\tau^{Q}}F_2(\tau^P, \tau^{Q}) \mid \tau^P = 0\right]; 
\quad \quad W_C := \sup_{Q \in \cR^2} \bE_x\left[\delta_2^{\tau^P\wedge\tau^{Q}}F_2(\tau^P, \tau^{Q}) \mid \tau^P > 0\right].
\end{align*}

Again we assume that the follower always chooses the earliest stopping time that attains the maximal expected payoffs $W_S$ and $W_C$. By the definition of $F_2$, we have that $W_S$ is a function of current state $x$:
\begin{equation} \label{eq:W_S}
W_S = W_S(x) :=  \sup_{Q_{S,0}(x) \in [0,1]}  Q_{S,0}(x)h_2(x) + (1- Q_{S,0}(x))g_2(x) =  \max \{h_2(x), g_2(x)\},
\end{equation}
of which the optimal response is given by $Q^*_S(x) = \one_{\{h_2(x) \ge g_2(x)\}}.$
Consequently, the follower only needs to solve\[
W_C := \sup_{Q_C \in \cR} \bE_x\left[\delta_2^{\tau^P\wedge\tau^{Q^*_S, Q_C}}F_2(\tau^P, \tau^{Q^*_S, Q_C}) \mid \tau^P > 0\right].
\]
Given $\tau^P > 0$, the quantity $P_0$ will not affect the follower's utility, and $W_C$ depends on the leader's strategy starting from the next step $t = 1$, namely $\theta_x \circ P$, in the following way:
\begin{equation*}
W_C = W_C(x, \theta_{x}\circ P) := \sup_{Q_C\in \cR}Q_{C,0}(x) f_2(x) + (1-Q_{C,0}(x) )\delta_2\bE_x\left[ J_2 \left(X_1, {\theta_x \circ P},({Q^*_S, \theta_x \circ Q_C})\right)\right].
\end{equation*}
For the remainder of this subsection, we omit the superscript $Q^*_S$ and denote $Q_C$ by $Q$ whenever no confusion arises. With a slight abuse of notation, denote\begin{equation}\label{eq:J2new}
    J_2(x,P,Q): = J_2 \left(x, P,{Q^*_S,  Q_C}\right), \quad \forall x \in \bX,\,\, \forall P, Q \in \cR.
\end{equation}

Thanks to the time-homogeneity of the Markov process $X$ and the exponential discounting, we derive in Lemma~\ref{lemma:WV_iteration} recursive equations satisfied by $W_C$, which will be used in the subsequent analysis. Before that, let us establish a continuity property in Lemma~\ref{lemma:continous_in_Q} under the following assumption.

\begin{assumption}\label{asp:bounded_pi}
    The density function $\pi$ is bounded, i.e., \[
   \|\pi\|_{\infty}:= \sup_{x, y \in \bX} |\pi(x,y)| < \infty.
    \]
\end{assumption}

\begin{lemma}\label{lemma:continous_in_Q} Suppose Assumption~\ref{asp:bounded_pi} holds. Recall the topology on $\cR$ specified by \eqref {eq:distance}. For each $x \in \bX$ and  $P \in \cR$, the follower's expected payoff $Q \mapsto J_2(x, P, Q)$ in \eqref{eq:J2new} is continuous under this topology. As a result, the map $(x,P)\mapsto W(x,P)=\sup_{Q\in\cR} J_2(x,P,Q)$ is Borel measurable.
\end{lemma}

\begin{lemma} \label{lemma:WV_iteration}
The follower's expected payoff function $W_C$ satisfies the following Bellman equation for each $P \in \cR$:
\begin{equation} \label{w_bellman}
W_C(x, P) = \max \{f_2(x), \delta_2 \bE_x[W(X_1, P)] \}, \quad \forall x\in \bX,
\end{equation}
where the expectation $ \bE_x[W(X_1, P)] $ admits the following integral representation:
\begin{equation}\label{eq111}
\bE_x[W(X_1, P)] = \int_{\bX} [P_0(y) W_S(y) + (1-P_0(y))W_C(y, \theta_y \circ P)] \Pi(x, dy).
\end{equation}
\end{lemma}

Since each Markov policy $p\in \cR_0$ can be regarded as a degenerate path-dependent policy $P \in \cR$ with $\theta_x \circ P = P = p$ for any $x \in \bX$,  the following corollary follows directly from Lemma~\ref{lemma:WV_iteration}.

\begin{corollary} \label{cor:markov_iteration}
Consider the expected payoff functions when the leader adopts only randomized Markov stopping policies $p\in \cR_0$. We have 
\begin{equation}\label{eq:w_markov_bellman}
W_C(x, p) =  \max\left\{ f_2(x), \delta_2\int_{\bX}  [p_y W_S(y) + (1-p_y)W_C(y, p)] \Pi(x,dy)  \right\}.
\end{equation}  

\end{corollary}


Before studying the leader's optimization problem, we prove some properties of $W_C$ as a function of $P$. 
For each $x \in \bX$, define the set of all  feasible continuation values $W_C(x,P)$ by
\begin{equation*}
D_x := \{W_C(x,P) \mid P \in \cR\}.
\end{equation*}

Lemma~\ref{lemma:w_continuous} shows that for each $x \in \bX$, the function $P \mapsto W_C(x,P)$ is continuous. This results is used in the proof of Lemma~\ref{lemma:D_x}, which shows that for each $x \in \bX$, the set $D_x$ is a closed interval, and its endpoints can be characterized using the DPP.

\begin{lemma} \label{lemma:w_continuous}
Let Assumption~\ref{asp:bounded_pi} hold. For any $x\in\bX$, the follower's payoff function $P\mapsto W_C(x, P)$ is continuous under the topology defined by \eqref{eq:distance}.
\end{lemma}

\begin{lemma} \label{lemma:D_x}
For each $x \in \bX$, denote
$$\underline{w}_x := \inf_{P\in \cR} W_C(x, P)\quad\text{and}\quad\overline{w}_x := \sup_{P\in \cR} W_C(x, P).$$
Then the functions $x \mapsto\underline{w}_x$ and $x \mapsto\overline{w}_x$ are Borel measurable and are
the unique solutions to the following Bellman's equations.
\begin{align} 
\label{e2}\underline{w}_x &= \max \left\{f_2(x), \delta_2 \int_{\bX} \left(W_S(y)\wedge \underline{w}_y \right)\Pi(x,dy)  \right\}, \quad \forall x \in \bX,\\ 
\label{e3}\overline{w}_x &= \max  \left\{f_2(x), \delta_2\int_{\bX}\left(W_S(y)\vee \overline{w}_y \right) \Pi(x,dy)  \right\}, \quad \forall x \in \bX.
\end{align}
Moreover,
\[D_x = [\underline{w}_x, \overline{w}_x].\]
\end{lemma}

Define
$$D:=\{(x,w)\in\X\times\mathbb{R}:\ x\in\X,w\in D_x\}.$$
From the above lemma we know that $D$ is Borel measurable and $D_x$ is closed for each $x\in\X$. Then by the Kuratowski-Ryll-Nardzewski measurable selection theorem \cite[Theorem 4.1]{doi:10.1137/0315056}, there exists a Borel measurable function $w(\cdot):\X\mapsto\mathbb{R}$ such that $w(x)\in D_x$ for any $x\in\X$. Denote the set of such Borel measurable selectors by $D^{\bX}$, i.e.,
\[
    D^{\bX}: = \{w: (\bX, \cB(\bX)) \to (\bR, \cB(\bR)) \mid w(x) \in D_x, \,\, \forall x \in \bX\}.
    \]

\subsubsection{The leader's problem}
From the preceding section,  the follower adopts the following optimal response functions\[
Q^*_S(x) = \one_{\{h_2(x)\ge g_2(x)\}}, \quad Q^*_C(x,P) =  \one_{\{f_2(x)=W_C(x,P)\}},\,\, \forall x\in \bX, P \in \cR. 
\]
Given the resulting optimal response function $Q^*(P) = (Q^*_S, Q^*_C(P))$, we define the leader's utility level at stopping policy $P \in \cR$ and initial state $x$ by $V(x,P)$. This utility admits the decomposition
\begin{equation} \label{eq:v_def}
V(x,P) := J_1\left(x, \tau^P, \tau^{Q^*(P)}\right) = P_0(x) V_S(x) + (1-P_0(x)) V_C(x, \theta_x \circ P),
\end{equation}
where 
\begin{equation*}
V_S(x)  = \bE_x\left[F_1(\tau^P, \tau^{Q^*}) \mid \tau^P = 0 \right]
\end{equation*}
denotes the utility conditional on the leader stopping at the current step, and 
\begin{equation*}
V_C(x, \theta_x \circ P) = \bE_x \left[\delta_1^{\tau^P\wedge\tau^{Q^*}}F_1(\tau^P, \tau^{Q^*}) \mid \tau^P > 0 \right]
\end{equation*}
denotes the utility conditional on the leader continuing at the current step, and it depends on $P$ only through $\theta_x \circ P$.
%
%
%
%
%

It is straightforward that\[
V_S(x) = h_1(x) \one_{\{h_2(x) \ge g_2(x)\}} + f_1(x) \one_{\{h_2(x) < g_2(x)\}}.
\]
By taking conditional expectation, we obtain the following recursive equation satisfied by the leader's continuation utility $V_C$:  for each $P \in \cR, x \in \bX$,
 \begin{equation*}
 \begin{split}
      V_C(x,P) 
      =&g_1(x) \one_{\{W_C(x,P) = f_2(x)\}} + \delta_1\bE_x[V(X_1,P)] \one_{\{W_C(x,P) > f_2(x)\}}.
 \end{split}
 \end{equation*}

A similar argument to that used in the proof of Lemma~\ref{lemma:continous_in_Q} yields the following representation for the utility function $V(x,P)$:
    \begin{align*}
        V(x,P) 
         =& \sum_{t = 0}^{\infty}  \delta_1^t \bE_x\left[\prod_{k =0}^{t-1} \left( (1-P_k)(1- Q^*_k \right)\left[V_S(X_t) P_t + g_1(X_t) (1-P_t) Q^*_t\right]\right],
    \end{align*}
where $Q^*_k = Q^*_C(x_k, \theta_{x_0, \cdots, x_{k-1}}\circ P) = \one_{\{f_2(x_k) = W_C(x_k, \theta_{x_0, \cdots, x_{k-1}}\circ P)\}}, \forall k \in \bN_0$ is Borel measurable. 

It is easy to see that when $P \mapsto Q^*(P)$ is continuous, the map $P \mapsto V(x,P)$ is continuous, and hence the map $(x,P) \mapsto V(x,P)$ is Borel measurable. By a monotone class argument, the same mapping is Borel measurable if $P \mapsto Q^*(P)$ is Borel measurable.  Consequently, $(x,P) \mapsto V_C(x,P)$ is also Borel measurable, and 
the expectation $\bE_x[V(X_1,P)]$ admits the following integral representation:
 \begin{equation*}
 \bE_x[V(X_1,P)] = \int_{\bX}[P_0(y) V_S(y) + (1-P_0(y)) V_C(y, \theta_y \circ P)] \Pi(x, dy).
 \end{equation*}
Similarly, for Markov stopping policy $p \in \cR_0$, the continuation value $V(x,p)$ satisfies recursive equation

\begin{equation}\label{eq:v_markov_bellman}
   V_C(x,p) = g_1(x) \one_{\{W_C(x,p) = f_2(x)\}} + \delta_1 \left(\int_{\bX}[p_y V_S(y) + (1-p_y)V_C(y, p)]\Pi(x,dy) \right) \one_{\{W_C(x,p) > f_2(x)\}}.
\end{equation}

\begin{definition} \label{def:infinite_precommit}
We call a randomized stopping policy $P^* \in \cR$ a precommitment strategy if it is optimal in the sense that 
\begin{equation} \label{eq:infinite_precommit}
	V(x, P^*) = \sup_{P\in \cR} V(x, P), \quad \forall x \in \bX,
\end{equation}
where $V(x, P)$ is defined by \eqref{eq:v_def}.

\end{definition}

To study the precommitment strategy, we draw inspiration from \cite{MR4557638,MR909450}. Thanks to equation~\eqref{eq:v_def}, the leader's optimal control problem~\eqref{eq:infinite_precommit} can be simplified as
\begin{equation*}
\sup_{P \in \cR} V(x,P) = \max\left\{V_S(x),  \sup_{P\in \cR}V_C(x, P)\right\}.
\end{equation*}
In the remainder of this section, we focus on characterizing $\sup_{P\in \cR}V_C(x, P)$. The key idea is to view the leader’s continuation value $V_C$ as a function of the follower’s continuation value $W_C$. The entire infinite sequence $P$ is effectively summarized by a scalar value $W_C(x,P)$ at each state $x$, capturing the follower’s continuation value. This perspective allows us to formulate a Bellman equation on the space of all feasible values of  $W_C(x, P)$, leading to a tractable characterization of the leader’s utility function  $v(w)$ as given in Proposition~\ref{thm:v_bellman}.

\begin{remark}\label{rmk:explain_inconsistency}
For any $P \in \cR$, the continuation value $V_C(\cdot,P)$ satisfies the iteration equation
\[
V_C(x,P)=\one_{\{W_C(x,P)=f_2(x)\}} g_1(x)+\one_{\{W_C(x,P)>f_2(x)\}}\delta_1\int_{\bX}\bigl[P_0(y)V_S(y)+(1-P_0(y))V_C(y,\theta_y\circ P)\bigr]\Pi(x,dy).
\]
In a classical time-consistent problem, one would expect to derive a DPP directly from such an iteration equation. For instance, defining
\[
V_C^*(x):=\sup_{P\in\cR}V_C(x,P),
\]
one would formally hope for a relation of the form
\begin{align*}
V_C^*(x)=& \sup_{\substack{P_0\in\cR_0, \\ \theta_y \circ P \in \cR,  \forall y \in \bX}}\one_{\{W_C(x,P)=f_2(x)\}} g_1(x)\\
&+\one_{\{W_C(x,P)>f_2(x)\}}\delta_1\int_{\bX}\bigl[P_0(y)V_S(y)+(1-P_0(y))V_C(y,\theta_y\circ P)\bigr]\Pi(x,dy)\\
=&\one_{\{W_C(x,P)=f_2(x)\}} g_1(x)+\one_{\{W_C(x,P)>f_2(x)\}}\sup_{P_0\in\cR_0} \delta_1 \int_{\bX}\bigl[P_0(y)V_S(y)+(1-P_0(y))V_C^*(y)\bigr]\Pi(x,dy).
\end{align*}
However, the continuation value $V_C^*$ involves the follower's optimal response, namely, the indicator terms
$\one_{\{W_C(x,P)=f_2(x)\}}$
and 
$\one_{\{W_C(x,P)>f_2(x)\}}$,
which depend on the entire path-dependent strategy $P$, rather than only on the current action $P_0$ and future optimal values $V^*_C$. Consequently, the optimization over $P$ cannot be decoupled into a current optimization over $P_0$ and a continuation optimization over the future strategy $\theta_y\circ P$. This prevents the optimization problem from being reduced to a standard Bellman recursion in terms of the current state alone, and illustrates the intrinsic time inconsistency in the Stackelberg stopping game.
\end{remark}


For each $x \in \bX, w \in D_x$, we define the leader's utility function $v(x,w)$ by 
\begin{equation*}
v(x,w) := \sup_{P\in \cL(x,w)} V_C(x,P),
\end{equation*}
where the set $\cL(x,w)$ is given by
\begin{equation*}
\cL(x,w) := \{P \in \cR \mid W_C(x, P) = w\}.
\end{equation*}
Since $P \to W_C(x, P)$ is continuous, for each feasible $w \in D_x$, the set $\cL(x,w) \subset \cR$ is non-empty and closed. 
%
%
Intuitively, $v(x,w)$ represents the maximum expected utility that the leader can achieve at state $x$, under the constraint that the follower’s continuation value is exactly $w$. It is easy to check that the optimization problem $\sup_{P \in \cR} V_C(x,P)$ can be reduced to 
$\sup_{w \in D_x} v(x,w)$,
as indicated by the following result.
\begin{lemma}
For each $x \in \bX$, we have 
\begin{equation*}
\sup_{w \in D_x} v(x,w) = \sup_{P \in \cR} V_C(x,P).
\end{equation*}
\end{lemma}

\begin{proof}
For each fixed $w \in D_x$, we have $\cL(x,w) \subset \cR$ and
\begin{equation*}
v(x,w) = \sup_{P \in \cL(x,w)} V_C(x,P) \le \sup_{P \in \cR} V_C(x,P).
\end{equation*}
Thus $\sup_{w \in D_x} v(x,w) \le \sup_{P \in \cR} V_C(x,P)$.

For each fixed $P \in \cR$, let $w = W_C(x,P)$. Then $w \in D_x, P \in \cL(x,w)$ and 
\begin{equation*}
V_C(x,P) \le v(x,w) \le \sup_{w \in D_x} v(x,w).
\end{equation*}
Thus $\sup_{P \in \cR} V_C(x,P) \le \sup_{w \in D_x} v(x,w)$.
\end{proof}

In the following theorem, we show that $v(x,w)$ can be characterized by a Bellman's equation~\eqref{eq:v_bellman}.

\begin{theorem} \label{thm:v_bellman}
Let Assumption~\ref{asp:bounded_pi} hold. Then $(x,w)\mapsto v(x,w):D\mapsto\mathbb{R}$ is bounded and upper-semianalytic, and is the unique solution to the Bellman equation
\begin{equation}\label{eq:v_bellman}
v(x,w) = g_1(x)\cdot \one_{\{w = f_2(x)\}}+
\sup_{({w}'_{\cdot},{p})\in \cA(x,w)} \delta_1 \int_{\bX} \left[p_y V_S(y) + (1-p_y) v(y,w'_y)\right] \Pi(x,dy)\cdot \one_{\{w > f_2(x)\}},
\end{equation}
where the admissible set $\mathcal{A}(x,w)$ is defined as
\begin{equation*}
\cA(x,w): = \left\{({w}'_{\cdot},{p}) \in D^{\bX} \times \cR_0 \mid w = \Theta_x({w}'_{\cdot},{p}) \right\},
\end{equation*}
with 
$
\Theta_x({w}'_{\cdot}, {p}):= \delta_2 \int_{\bX} \left[p_y W_S(y) + (1-p_y)w'_y \right] \Pi(x,dy).
$
\end{theorem}

%
%

\begin{remark}
Here $\Theta_x$ represents the update rule for the follower’s continuation value: it maps the stopping probabilities $p_{\cdot}$ and the continuation values ${w}'_{\cdot} \in D^{\bX}$ at the next state to the continuation value ${w}_x$ at the current state $x$. By Lemmas \ref{lemma:WV_iteration} and \ref{lemma:D_x}, $\mathcal{A}(x,w)\neq\emptyset$ for $w\in D_x\cap(f_2(x),\infty)$. 
Equation~\eqref{eq:v_bellman} characterizes the leader’s value function recursively, under the constraint that the follower’s continuation value remains fixed at level $w \in D_x$.
\end{remark}

\begin{remark}
$v(x,w)$ may not be continuous in $w$. A counterexample  is provided by the two-period toy model in Section~\ref{sec:finite_random}, in which by equation \eqref{eq:v_not_continuous} and $w = 4-2p_1$, we have 
\begin{equation*}
v(w) = V_C(0, p_1(w)) = 2\one_{\{w=3\}} + (6-w/2)\one_{\{w>3\}},
\end{equation*}
exhibiting a discontinuity at  $w = 3$. As a consequence, the supremum of $v(x,w)$ over $w\in D_x$, and hence the supremum of $V_C(x, P)$ over $P \in \mathcal{R}$, may not be attained.
\end{remark}

\subsection{Exact Equilibrium Strategy}\label{sec:infinity_equilibrium}
	
In addition to the precommitment strategy in the preceding subsection, a common approach to addressing time inconsistency is to consider an equilibrium strategy, from which the player has no incentive to deviate, assuming all their future selves adopt the same strategy. In our infinite horizon setting, we assume that all future selves adopt a randomized Markov stopping policy $p \in \cR_0$ as defined in Definition~\ref{def:infinite_markov_policy}. 
When restricted to the Markov stopping policies, the infinite-horizon Stackelberg stopping game is characterized by the following objective function pair $(J_1, J_2)$:
\begin{equation} \label{eq:game_exact_equilibrium}
    \begin{split}   
          \text{Follower: } & \sup_{(r, q) \in \cR_0 ^2} J_2(x, p, (r, q));\\
        \text{Leader: }& \,\,\,\sup_{p \in \cR_0 } \quad J_1(x, p, (r^*, q^*)),
    \end{split}
\end{equation}
with \begin{align*}
  &J_i(x, p, (r, q)) := \bE_x\left[ \delta_i^{\tau^{p} \wedge \tau^{(r, q)}} F_i(\tau^{p}, \tau^{(r, q)})\right], \quad i \in \{1,2\}, x\in \bX,\\
  & (r^*, q^*) = \arg\max_{(r, q) \in \cR^2} J_2(x, p, (r, q)),\footnotemark
\end{align*}
\footnotetext{When the argmax is not a singleton, $(r^*,q^*)$ is understood as a Borel measurable selector, such as the earliest optimal stopping strategy.}
where $\delta_1, \delta_2 \in (0,1)$ are the discount factors of the leader and the follower, respectively, and $\tau^p$, $\tau^{r,q}$, and $F_i$ are defined in \eqref{eq:tau_small_p}, \eqref{eq:tau_small_q}, and \eqref{eq:F1F2}, respectively.


\begin{remark}
In the infinite-horizon setting, we restrict attention to Markov equilibrium strategies. This is standard in the literature on time-inconsistent control, where (time-consistent) equilibrium strategies are typically characterized in feedback form as functions of the current state when the model is time-homogeneous and the horizon is infinite; see e.g., \cite{https://doi.org/10.1111/mafi.12293,CL18,CL20,HZ20,MR4397932}. Such a restriction is also economically meaningful. Indeed, because the model is time-homogeneous, at each time the leader faces the same decision problem, provided the game has not yet been stopped. Therefore,
it is natural for the leader to adopt a time-invariant (Markov) strategy. 
\end{remark}

As discussed in subsection \ref{sec:concepts}, the notion of equilibrium strategy in the Stackelberg stopping game corresponds to a feedback Stackelberg equilibrium where the follower's strategy is formulated as a function of the leader's strategy. We revisit these concepts in the infinite horizon setting and discuss their connection and distinction as follows.

\begin{definition}[\textbf{Feedback Equilibrium}]\label{def:feedback_equilibrium_general} 
The follower's Markov strategy is a pair $(r, \textbf{q})${\footnote{For each $p \in \cR_0,x\in \X$, the quantity
$\textbf{q}_x(p)$ represents the conditional probability of stopping given that the current state is $x$, the leader’s
strategy is p, and the leader will continue at current step; and the quantity $r_x$ represents the conditional probability of stopping given that the current state is $x$, and the leader stops at the current step. (In this case, the follower's decision no longer depends on the leader's strategy.)}} $\in \cR_0 \times  \cQ$, where\[
    \cQ :=\{\textbf{q}: \cR_0 \to \cR_0\}.
    \]
 A Markov strategy pair  $(p,(r,\mathbf{q}) )\in \cR_0 \times (\cR_0 \times\cQ)$ is called a randomized feedback equilibrium if the following holds:
    \begin{align*}
        &J_2\left(x,{p'},(r,\mathbf{q}(p'))\right) \ge J_2\left(x, {p'}, (r',\mathbf{q}'(p'))\right), \quad \forall x \in \bX, \, p', r' \in \cR_0, \, \mathbf{q}' \in \cQ;\\
        &J_1\left(x,p,(r,\mathbf{q}(p))\right) \ge J_1\left(x, {p' \oplus_1 p}, (r,\mathbf{q}(p' \oplus_1 p))\right), \quad \forall x \in \bX, \,  p' \in \cR_0,
    \end{align*}
    where ${p}' \oplus_1 {p}$ denotes a deviation from the Markov strategy ${p} \in \cR_0$ representing the strategy of using ${p}'$ at the first step and switching back to ${p}$ for the remaining time. 
\end{definition}


Here the first condition guarantees that $(r, \textbf{q})$ is a best response function for the follower without
assuming that the follower always chooses the earliest optimal stopping time, and the second condition means that the leader has no incentive to deviate from strategy $p$ at the current step, given that her future selves all stick to strategy $p$.
By \eqref{eq:v_def}, the follower’s response $\textbf{q}$ essentially depends only on the leader’s future strategy since the
follower can observe that the leader will continue at the current state. We thus conclude that \[
J_1(x, {p' \oplus_1 p}, (r,\textbf{q}(p' \oplus_1 p))) = J_1(x, p' \oplus_1 p, (r,\textbf{q}(p))), \quad p'\in \cR_0, (r,\textbf{q}) \in \cR_0 \times \cQ.
\]
 Therefore, the above definition can be reduced to the following
equivalent characterization.
\begin{proposition} \label{prop:feedback_char}
Let a pair of randomized Markov stopping policies $(p, (r,q)) \in \cR_0 \times (\cR_0 \times\cR_0))$ satisfy the following:
    
    \begin{align}
        &J_2\left(x,{p},(r,q)\right) \ge J_2\left(x, {p}, (r',q')\right), \quad \forall x \in \bX, \, q' , r'\in \cR_0; \label{eq:j2condition} \\
        &J_1\left(x,p,(r,q)\right) \ge J_1\left(x, {p' \oplus_1 p}, (r,q)\right), \quad \forall x \in \bX, \,  p' \in \cR_0. \label{eq:j1condition}
    \end{align}
    Let $(r,\mathbf{q} )\in \cR_0 \times \cQ$ be defined as
    \begin{equation*}
    r_x  \begin{cases}
        =1, & \text{ if }  g_2(x) < h_2(x),\\
        =0, & \text{ if }  g_2(x) > h_2(x),\\
        \in [0,1], & \text{ if }  g_2(x) = h_2(x),
    \end{cases} ; \quad \mathbf{q}(p')  = \begin{cases}
   q, & \text{ if }  p' = p;\\
   \arg\max_{q'  \in [0,1]^N} J_2\left(\cdot, {p'}, (r,q')\right), & \text{ if }  p' \ne p.\\
    \end{cases}
    \end{equation*}
    Then $(p,(r, \mathbf{q})) \in \cR_0 \times (\cR_0 \times\cQ)$ is a randomized feedback equilibrium.
    \end{proposition}
    
Note that a feedback equilibrium does not impose any tie-breaking rule for the follower, which implies that the follower optimal response strategy $(r, \textbf{q})$ may not be unique. However, in the preceding setting, we assume that the follower always chooses the \emph{earliest} stopping time that
attains the maximal expected payoffs. This assumption is reasonable, as choosing the earliest optimal
stopping time avoids additional risk and simplifies the definition of the follower’s best response function as follows.
\begin{definition}[\textbf{Exact Equilibrium}] \label{def:infinite_equilibrium}
    For each $p \in \cR_0$, denote the (earliest) optimal response function of the follower by $(r^*, q^*(p))$. Specifically,
\begin{equation*}
     r^*_x = \one_{\{h_2(x)\ge g_2(x)\}}, \quad q^*_x(p) := \one_{\{f_2(x) \ge W_C(x,p)\}}, 
\end{equation*}
where $W_C(\cdot,p)$ is characterized by the Bellman equation \eqref{eq:w_markov_bellman}.
Denote the leader's value function by \[
V(x,p) = J_1(x, {p},(r^*, q^*(p)))
\]

A randomized Markov stopping policy ${p} \in \cR_0$ is a randomized exact equilibrium strategy{\footnote{In this context, the term `exact' is introduced to distinguish this solution from the `regular' randomized equilibrium strategy obtained via entropy regularization (see Definition \ref{def:regular_equilibrium}). When necessary, we also refer to the feedback equilibrium as the `exact’ feedback equilibrium.}} if 
\begin{equation} \label{eq:exact_def}
V(x, {p}) \ge V(x,  {p'} \oplus_1 {p}), \quad \forall x \in \X, p' \in \cR_0.
\end{equation}
\end{definition}

\begin{remark}\label{rmk:feedback_vs_exact}
By definition, $(r^*, q^*(p))$ satisfies \eqref{eq:j2condition} for all $p \in \cR_0$, and the condition \eqref{eq:exact_def} implies \eqref{eq:j1condition}. Therefore, any exact equilibrium in Definition \ref{def:infinite_equilibrium} also corresponds to a feedback equilibrium in Definition \ref{def:feedback_equilibrium_general}, thanks to Proposition \ref{prop:feedback_char}. However, the converse does not hold. 
\end{remark}

In contrast to the finite-horizon case, we cannot rely on backward induction to construct an equilibrium strategy in the infinite-horizon setting. The following proposition shows that the existence of a randomized exact equilibrium strategy is not guaranteed.

\begin{proposition} \label{prop:ex_inf_horz}
The randomized exact equilibrium strategy defined in Definition~\ref{def:infinite_equilibrium} may fail to exist.
\end{proposition}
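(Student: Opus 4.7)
The plan is to construct an explicit war-of-attrition counterexample on a small state space and verify by case analysis that the associated best-response correspondence has no fixed point. The starting point is a linear-in-$p'$ characterization of equilibria: since $V(x,p'\oplus_1 p)=p'(x)V_S(x)+(1-p'(x))V_C(x,p)$, a policy $p\in[0,1]^N$ is an equilibrium iff at every $x\in\bX$, $p(x)=1$ when $V_S(x)>V_C(x,p)$, $p(x)=0$ when $V_S(x)<V_C(x,p)$, and any value in $[0,1]$ when $V_S(x)=V_C(x,p)$. Non-existence therefore reduces to showing that no single $p$ simultaneously meets these conditions at all states.

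I would select a small state space (likely $N=2$ with an irreducible $\Pi$), discount factors $\beta,\delta\in(0,1)$, and payoffs satisfying the war-of-attrition ordering $g_i>h_i>f_i$ (so in particular $V_S(x)=f_1(x)$ by \eqref{eq:V_S}), calibrated so that at some distinguished state $x_0$ three properties hold simultaneously. First, whenever $p(x_0)$ is small enough that the follower continues at $x_0$ (i.e.\ $W_C(x_0,p)>f_2(x_0)$), the resulting value satisfies $V_C(x_0,p)=\beta\,\bE_{x_0}[V(X_1,p)]<V_S(x_0)$, forcing the equilibrium condition to demand $p(x_0)=1$ and thereby ruling out $p(x_0)=0$. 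Second, whenever $p(x_0)$ is large enough that the follower stops at $x_0$ (i.e.\ $W_C(x_0,p)=f_2(x_0)$), the value flips to $V_C(x_0,p)=g_1(x_0)>V_S(x_0)$, forcing the equilibrium condition to demand $p(x_0)=0$ and thereby ruling out $p(x_0)=1$. Third, the follower's indifference threshold $p^{\ast}(x_0)\in(0,1)$, determined by $\delta\,\bE_{x_0}[W(X_1,p)]=f_2(x_0)$, is isolated, and the two one-sided limits $\beta\,\bE_{x_0}[V(X_1,p)]$ and $g_1(x_0)$ of $V_C(x_0,\cdot)$ across $p^{\ast}(x_0)$ are strictly separated from $V_S(x_0)$. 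This last property prevents any mixed value $p(x_0)\in(0,1)$ from satisfying $V_S(x_0)=V_C(x_0,p)$, completing the non-existence argument. The underlying discontinuity mechanism is precisely the infinite-horizon analogue of the finite-horizon counterexample in Proposition~\ref{prop:precomit_random_nonexist}, cf.\ equation \eqref{eq:v_not_continuous}.

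Verification reduces to direct Bellman computations via Corollary~\ref{cor:markov_iteration}: solve \eqref{eq:w_markov_bellman} for $W_C(\cdot,p)$, locate $p^{\ast}(x_0)$, and compute $V_C(x_0,p)$ from \eqref{eq:v_markov_bellman} on both sides of $p^{\ast}(x_0)$ at the chosen parameters. The principal obstacle is the \emph{joint} calibration of the example: because $V_C(x_0,\cdot)$ depends on $p$ through the values $V(y,\cdot)$ and $V_C(y,\cdot)$ at the other states, the strict separations required above must remain stable as the other coordinates of $p$ vary over $[0,1]$. My strategy is to rely on the Lipschitz/contraction properties exploited in the proofs of Lemma~\ref{lemma:D_x} and Theorem~\ref{prop:v_bellman} to control how $W$ and $V$ move with $p$ away from the threshold, and to choose $\beta$ small, the payoffs sufficiently extreme (in particular making $g_1(x_0)$ much larger than $V_S(x_0)=f_1(x_0)$ and $\beta\,\bE_{x_0}[V(X_1,p)]$ uniformly smaller than $V_S(x_0)$), together with a transition matrix that pushes the follower's threshold at $x_0$ into the interior $(0,1)$. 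A finite parameter search respecting the war-of-attrition order then produces the required example, analogous in spirit to the coefficient choice \eqref{eg1} used in the finite-horizon analysis.
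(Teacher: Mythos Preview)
Your high-level mechanism is exactly the one the paper exploits: the linear-in-$p'$ characterization together with the discontinuity of $V_C(x_0,\cdot)$ across the follower's indifference surface. However, the route you sketch diverges from the paper's in two substantive ways, and one of them is a genuine gap.

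First, the paper does \emph{not} work with $N=2$ and a numerical parameter search. It takes $\bX=\{a,b,c\}$ (with $\pi_{ac}=0$ and all other entries positive) and introduces a single scaling parameter $K>0$ into the payoffs, e.g.\ $f_1(b)=K$, $g_1(a)=K^2$, $f_2(c)=K^2$. The argument is then purely qualitative: one first records that any equilibrium must satisfy $V(x)\ge f_1(x)$ at every state, so in particular $V(b)\ge K$. For $K$ large this immediately forces $p_a=p_c=0$, because $p_x>0$ would give $V(x)=f_1(x)\ge \beta\sum_y\pi_{xy}V(y)$, which is incompatible with $V(b)\ge K$. One is then left with three subcases $p_b\in\{0,1\}$ or $p_b\in(0,1)$, each of which is dispatched by a short contradiction for $K$ large. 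The point is that the large-$K$ device \emph{is} the solution to the ``joint calibration'' problem you flag: it makes the required strict inequalities hold uniformly over the remaining coordinates of $p$ without any Lipschitz bookkeeping or search.

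Second, and this is the gap: your plan never produces an example, and your heuristic for why $N=2$ should suffice is not convincing. You phrase the analysis as ``when $p(x_0)$ is small/large the follower continues/stops at $x_0$,'' but the follower's decision at $x_0$ depends on the \emph{entire} vector $p$ through $\delta\,\bE_{x_0}[W(X_1,p)]$, so the discontinuity set is a hypersurface in $[0,1]^N$, not a threshold in a single coordinate. With only two states the equilibrium constraints at the second state feed back into $V_C(x_0,\cdot)$ in a way that can close the window you are trying to keep open; making $\beta$ small to suppress $\beta\,\bE_{x_0}[V(X_1,p)]$ simultaneously shrinks the continuation values at the other state and may restore a fixed point there. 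The paper's third state $c$, with $f_2(c)=K^2$, acts as an anchor (the follower always stops at $c$, pinning $V(c)=g_1(c)=2$) that decouples the system enough for the contradictions to go through. Absent an explicit two-state instance with verified inequalities, your proposal is a plausible strategy rather than a proof; the safe and cleaner route is the three-state, large-$K$ construction.
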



Since an exact equilibrium strategy may not exist, and the existence of feedback equilibria is difficulty to establish due to the multiplicity of the follower's best responses w.r.t. the leader's strategy (see Remark \ref{rmk:uncountable}), we turn to the notion of an $\varepsilon$-equilibrium.

\begin{definition}[\textbf{$\varepsilon$-Feedback Equilibrium}]\label{def:epsilon_feedback_equilibrium}
A Markov strategy pair  $(p,(r,\mathbf{q}) )\in \cR_0 \times (\cR_0 \times\cQ)$ is called a randomized $\varepsilon$-feedback equilibrium for problem \eqref{eq:game_exact_equilibrium}, if the following holds:
    \begin{align*}
        &J_2\left(x,{p'},(r,\mathbf{q}(p'))\right) \ge J_2\left(x, {p'}, (r',\mathbf{q}'(p'))\right) - \varepsilon, \quad \forall x \in \bX, \, p', r' \in \cR_0, \, \mathbf{q}' \in \cQ;\\
        &J_1\left(x,p, (r,\mathbf{q}(p))\right) \ge J_1\left(x, {p' \oplus_1 p}, (r,\mathbf{q}(p' \oplus_1 p))\right)-\varepsilon, \quad \forall x \in \bX, \,  p' \in \cR_0.
    \end{align*}
\end{definition}

Similar to the characterization of Definition \ref{def:feedback_equilibrium_general}
in Proposition \ref{prop:feedback_char}, we obtain an $\varepsilon$-feedback equilibrium for free if we have an $\varepsilon$-equilibrium defined as follows.
\begin{definition}[\textbf{$\varepsilon$-Equilibrium}]\label{def:epsilon_equilibrium}
A pair of randomized Markov stopping policies $(p,(r,q)) \in \cR_0 \times (\cR_0 \times \cR_0)$ is called an $\varepsilon$-equilibrium for problem \eqref{eq:game_exact_equilibrium} if  
    \begin{align}
        &J_2\left(x,{p},(r,q)\right) \ge J_2\left(x, {p}, (r',q')\right) -\varepsilon, \quad \forall x \in \bX, \, (r',q') \in \cR_0^2; \label{eq:j2condition_epsilon} \\
        &J_1\left(x,p,(r,q)\right) \ge J_1\left(x, {p' \oplus_1 p}, (r,q)\right) - \varepsilon, \quad \forall x \in \bX, \,  p'\in \cR_0. \label{eq:j1condition_epsilon}
    \end{align}
   
\end{definition}

Comparing conditions \eqref{eq:j2condition_epsilon} and \eqref{eq:j1condition_epsilon} with conditions \eqref{eq:j2condition} and \eqref{eq:j1condition} (which are satisfied by an exact equilibrium, see Remark \ref{rmk:feedback_vs_exact}), we see that the $\varepsilon$-equilibrium from Definition \ref{def:epsilon_equilibrium} can be interpreted as an approximation of the exact (feedback) equilibrium strategy in the original (unregularized) game. In the next subsection, the existence of $\varepsilon$-equilibrium is established via considering an entropy-regularized Stackelberg stopping game, in which an entropy term is added to the follower’s optimization problem.
This regularization yields a continuous best-response function, allowing us to establish the existence of a randomized equilibrium in the entropy-regularized game. 

\subsection{Regular Equilibrium Strategy} \label{sec:regular}

An entropy-regularized Stackelberg game with parameter $\lambda >0$ is characterized by the objective function pair $(J_1,J_2^{\lambda})$:

\begin{equation} \label{eq:game_regular_equilibrium}
    \begin{split}   
          \text{Follower: } & \sup_{(r, q) \in \cR_0 ^2} J^{\lambda}_2(x, p, (r, q));\\
        \text{Leader: }& \,\,\,\sup_{p \in \cR_0 } \quad J_1(x, p, (r^{*,\lambda}, q^{*,\lambda})),
    \end{split}
\end{equation}
with \begin{align*}
  &J^{\lambda}_2(x, p, (r, q)) :=
  \bE_x\left[ \delta_2^{\tau^{p} \wedge \tau^{(r, q)}} F_2(\tau^{p}, \tau^{(r, q)})\right] + \lambda \bE_x\left[
 \sum_{t = 0}^{\tau^{{p}} \wedge \tau^{{q},{r}}} \delta_2^t \cH(q_{X_t} \one_{\{\tau^{{p}} >t \}} + r_{X_t} \one_{\{\tau^{{p}} = t \}} )\right],\\
  &J_1(x, p, (r, q)) := \bE_x\left[ \delta_1^{\tau^{p} \wedge \tau^{(r, q)}} F_1(\tau^{p}, \tau^{(r, q)})\right], \quad  x\in \bX, p\in \cR_0, (r,q) \in \cR_0^2;\\
  & (r^{*,\lambda}, q^{*,\lambda}) := \arg\max_{(r, q) \in \cR^2} J^{\lambda}_2(x, p, (r, q)),
\end{align*}
where $\delta_1, \delta_2 \in (0,1)$ are the discount factors of the leader and the follower, respectively, $\tau^p$, $\tau^{r,q}$, and $F_i$ are defined in \eqref{eq:tau_small_p}, \eqref{eq:tau_small_q}, and \eqref{eq:F1F2}, respectively, and $\cH$ is Shannon's entropy defined by
$$\cH(q)=  -q \log(q)-(1-q) \log(1-q),\quad q \in [0,1].$$

\begin{remark}

    The main motivation for introducing the entropy term is that it plays a crucial analytical role in establishing the existence of a regular equilibrium (see Definition \ref{def:regular_equilibrium}), which serves as a tractable approximation to the exact equilibrium, as shown in Proposition \ref{prop:approximation} and Remark \ref{reapp}.  As demonstrated in Section~\ref{sec:special_case}, this regularization becomes particularly important when the state space is uncountable; see Remark~\ref{rmk:uncountable}.

    In addition to the main motivation, the entropy term is potentially essential for the numerical stability of the iterative algorithm used to compute approximate equilibria. By smoothing the follower's optimization problem, the entropy term ensures that the follower's best response, and consequently the leader's utility, depend continuously on the leader's strategy $p$ (see Corollary \ref{cor:continuous_qstar} and Proposition \ref{prop:continuous_vc}). This regularity may significantly contribute to the stability of numerical computation.

    Beyond its technical role, the entropy term also admits natural behavioral and economic interpretations. For instance, the resulting regularized best-response map \eqref{eq:q_star} has the form of a logit (or quantal) response rule, widely used in economics to model decision makers subject to bounded rationality; see~\cite{MCKELVEY19956}. Alternatively, from a learning perspective, the entropy term can be viewed as encouraging exploration in the exploration–exploitation trade-off in reinforcement learning; see, e.g., \cite{JMLR:v21:19-144}.
\end{remark}

\subsubsection{The follower's problem} For a fixed constant $\lambda > 0$, denote the follower's value function by 
\begin{equation*}
W^{\lambda}(x,{p}): = \sup_{(r,q) \in \cR_0^2} J_2^{\lambda}(x, {p},(r,q) ),
\end{equation*}
which can be decomposed as
\begin{equation*}
W^{\lambda}(x, {p}) = p_x W^{\lambda}_S(x) + (1-p_x) W^{\lambda}_C(x,{p}),
\end{equation*}
with
\begin{equation} \label{eq:w_lambda_s}
W^{\lambda}_S(x) := \sup_{r_x \in [0,1]} r_x h_2(x) + (1-r_x) g_2(x) + \lambda \cH(r_x),
\end{equation}
and 
\begin{equation*}
W^{\lambda}_C(x,{p}) := \sup_{{q}, {r} \in \cR_0} q_x f_2(x)+(1-q_x)\delta_2\bE_x[J^{\lambda}_2(X_1, p, (r,q))] + \lambda \cH(q_x).
\end{equation*}

Thanks to the concavity of $\cH$, we obtain the following result by straightforward computation.
\begin{lemma} \label{lemma:r_star}
There exists a unique $r^{*, \lambda}_x \in [0,1]$ that attains the optimal value in \eqref{eq:w_lambda_s}. It is given by
\begin{equation} \label{eq:r_star}
r^{*, \lambda}_x = \frac{1}{1+ \exp \left(\frac{g_2(x) - h_2(x)}{\lambda}\right)}.
\end{equation}
The utility $W^{\lambda}_S$ is given by 
\begin{equation*}
W^{\lambda}_S(x) = h_2(x) + \lambda \log\left(1+\exp\left(\frac{g_2(x)-h_2(x)}{\lambda}\right)\right).
\end{equation*}
\end{lemma}


Recall that the set of randomized Markov (stationary) stopping policies is defined by
\begin{equation*}
    \cR_0 := \{p: (\bX, \cB(\bX)) \to ([0,1], \cB([0,1])) \}.
\end{equation*}

Under Assumption~\ref{asp:reference_prob}, we have that $\cR_0 \subset L^{\infty}(\bX, \mu)$, which is compact in weak-* topology $\sigma(L^{\infty}(\bX,\mu), L^1(\bX, \mu))$. We say a sequence of function $\{p^n\}_{n\in \bN} \in \cR_0$ converges to $p \in \cR_0$, i.e., \[
p^n \xrightarrow{w*} p, 
\]
if and only if for any test function $\phi \in L^1(\bX, \mu)$,\[
\int_{\bX} p^n(y)\phi(y)\mu(dy) \to \int_{\bX} p(y)\phi(y)\mu(dy).
\]

The following lemma will be used repeatedly in establishing continuity properties.
\begin{lemma} \label{lemma:convergence}
Let $\{p^n\}_{n \in \bN}$ be a sequence of functions in $\cR_0$  and  $p^n \xrightarrow{w*} p \in \cR_0$. Let $\{\phi^n\}_{n \in \bN}$ be a sequence of measurable functions $\phi^n : \bX \to \bR$ with $\sup_{n\in\bN} \sup_{x\in \bX} |\phi^n(x)| < \infty$ and $ \phi^n(x) \to \phi(x)$ for $\mu$-a.e.~$x\in \bX$. Then we have \[
\bE_x[p^n_{X_1} \phi^n(X_1)]   \to \bE_x[p_{X_1} \phi(X_1)], \quad \forall x \in \bX.
\]

\end{lemma}
\begin{proposition}\label{prop:continuous_wc}
    For each $x\in \bX$, the follower's continuation value $W^{\lambda}_C(x,p)$  is continuous w.r.t.~$p$ under the weak-* topology. That is,  $p^n \xrightarrow{w*} p$ implies $W^{\lambda}_C(x,p^n) \to W^{\lambda}_C(x,p)$. 
\end{proposition}

The next corollary follows directly from Proposition~\ref{prop:continuous_wc}.
\begin{corollary}\label{cor:continuous_qstar}
    The follower's response function $q^{*,\lambda}$ is given by
    \begin{equation}\label{eq:q_star}
q^{*,\lambda}_{x} :=q^{*,\lambda}_{x} (p):= \frac{1}{1+ \exp \left(\frac{\delta_2 \bE_x[p_{X_1}W^{\lambda}_S(X_1) + (1-p_{X_1})W^{\lambda}_C(X_1,p)]- f_2(x)}{\lambda}\right)}.
\end{equation} 
Moreover, $p^n \xrightarrow{w*} p$ implies $q^{*,\lambda}_x(p^n) \to q^{*,\lambda}_x(p)$ for each $x \in \bX$.
\end{corollary}

\subsubsection{The leader's problem}Given the follower's optimal response $(r^{*,\lambda}, q^{*,\lambda}(p))$ to the leader's strategy $p$, in equations \eqref{eq:r_star} and \eqref{eq:q_star}, the entropy-regularized game \eqref{eq:game_regular_equilibrium} now can be reduced to the leader's problem: 
\begin{equation} \label{eq:leader_problem_regular}
\sup_{p \in \cR_0}V^{\lambda}(x, p),  \text{ with } V^{\lambda}(x, p) := J_1(x, p, (r^{*,\lambda},q^{*,\lambda}(p)))
\quad \forall x \in \bX, p\in \cR_0.
\end{equation}

Following similar reasoning as in Corollary~\ref{cor:markov_iteration}, the value function $V^{\lambda}$ in \eqref{eq:leader_problem_regular} can be decomposed as
\begin{equation*}
V^{\lambda}(x, p) = p_x V^{\lambda}_S(x) + (1-p_x) V^{\lambda}_C(x, p),
\end{equation*}
where the stopping value $V^{\lambda}_S(x)$ is given explicitly by
\begin{equation}\label{eq:vs_lambda}
V^{\lambda}_S(x) := \bE_x[F_1(\tau^{p}, \tau^{r^{*,\lambda},q^{*,\lambda}}) \mid \tau^{p} = 0] = r^{*,\lambda}_x h_1(x) + (1-r^{*,\lambda}_x) f_1(x);
\end{equation}
and the continuation value $V^{\lambda}_C(x, p)$ satisfies iteration equation
\begin{equation*}
V^{\lambda}_C(x, p) := \bE_x[\delta_1^{\tau^{p} \wedge \tau^{r^{*,\lambda},q^{*,\lambda}}}F_1(\tau^{p}, \tau^{r^{*,\lambda},q^{*,\lambda}}) \mid \tau^{p} > 0] = q^{*,\lambda}_x(p) g_1(x) + (1-q^{*,\lambda}_x(p)) \delta_1 \bE_x[V^{\lambda}(X_1, p)].
\end{equation*}

When the leader deviates from the strategy $p$ to $p'\oplus_1 p$, the follower's response remains unchanged, as it depends only on the leader's current state and future strategy. The corresponding value function of the leader is therefore given by
\begin{equation}\label{eq:v_lambda_decompose}
V^{\lambda}(x,p'\oplus_1 p) = p'_x V^{\lambda}_S(x) + (1-p'_x)V^{\lambda}_C(x, p).
\end{equation}

Analogously to the exact equilibrium defined in Definition~\ref{def:infinite_equilibrium}, we introduce the notion of a regular equilibrium in the entropy-regularized Stackelberg stopping game. 
\begin{definition}[\textbf{Regular Equilibrium}] \label{def:regular_equilibrium} A randomized Markov stopping policy $p \in \cR_0$ is called a randomized regular equilibrium for the entropy-regularized Stackelberg stopping game \eqref{eq:game_regular_equilibrium} with parameter $\lambda > 0$ if, for all $x \in \bX$, 
\begin{equation*}
V^{\lambda}(x,p) \ge V^{\lambda}(x, p'\oplus_1 p), \quad \forall p' \in \cR_0.
\end{equation*}
Equivalently, the strategy pair $(p, (r,q))$ with $(r,q) := (r^{*,\lambda}, q^{*\lambda}(p))$ as defined in \eqref{eq:r_star} and \eqref{eq:q_star}, satisfies that for any $x \in \bX$,
\begin{equation*}
    \begin{split}
        &J_2^{\lambda}(x,p, (r, q)) = \sup_{(r',q')\in \cR_0^2} J_2^{\lambda}(x, p,(r',q')),\\
        &J_1(x, p, (r, q)) = \sup_{p'\in \cR_0} J_1(x, p' \oplus_1 p, (r, q)).
    \end{split}
\end{equation*}
\end{definition}

For any fixed $x$, the value $V^{\lambda}(x, p'\oplus_1 p)$ depends on $p'$ only through the single component $p'_x$; see \eqref{eq:v_lambda_decompose}. This observation leads to the following proposition, which provides a sufficient condition for the above inequality to hold for almost all $x \in \bX$.

\begin{proposition} \label{prop:equilibrium_characterization}
If $p$ is a fixed point of the set-valued map $\Psi: \cR_0 \to 2^{\cR_0}$ defined as follows,
\begin{equation*}
    \Psi(p) := \left\{\tilde p \in \cR_0: \int_{\bX}(1-\tilde p_x )\one_{\{V_S^{\lambda}(x) > V_C^{\lambda}(x,p\}} \mu(dx) = 0  \text{ and } \int_{\bX}\tilde p_x \one_{\{V_S^{\lambda}(x) < V_C^{\lambda}(x,p)\}} \mu(dx)= 0.  \right\},
\end{equation*}
then \[
\mu\left( \left\{x \in \bX :  V^{\lambda}(x,p) \ge V^{\lambda}(x, p' \oplus_1 p), \,\, \forall p' \in \cR_0\right\}\right) = 1.
\]


\end{proposition}

The following proposition is a key step to establish the existence of a regular equilibrium. 
\begin{proposition}\label{prop:continuous_vc}
    For each $x \in \bX$, the leader's continuation value $V^{\lambda}_C(x,p)$  is continuous w.r.t.~$p$ under the weak-* topology. That is, $p^n \xrightarrow{w*} p$ implies $V^{\lambda}_C(x,p^n) \to V^{\lambda}_C(x,p)$. 
\end{proposition}

\begin{remark} \label{rmk:mu_null}
From the proofs of Proposition~\ref{prop:continuous_wc}, Corollary~\ref{cor:continuous_qstar}, and Proposition~\ref{prop:continuous_vc}, the values of $W_C^{\lambda}(\cdot,p)$, $q^*(p)$ and $V_C^{\lambda}(\cdot,p)$ are unchanged if  $p$ is modified on any $\mu$-null set. Indeed, these quantities depend on $p$ only through integrals w.r.t.~$\mu$.
\end{remark}
\begin{theorem}\label{thm}
    For each $\lambda > 0$, there exists a regular randomized equilibrium $p \in \cR_0$ for the entropy-regularized Stackelberg game \eqref{eq:game_regular_equilibrium}.
\end{theorem}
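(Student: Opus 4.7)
The plan is to apply Kakutani's fixed point theorem to the set-valued map $\Psi$. The domain $[0,1]^N$ is a nonempty, compact, convex subset of $\bR^N$, so it suffices to verify that $\Psi$ has nonempty, convex values and a closed graph.

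First, I would note that $G^{\lambda}(x, p'_x, p) = p'_x V^{\lambda}_S(x) + (1-p'_x) V^{\lambda}_C(x, p)$ is affine, and hence continuous, in $p'_x$ for fixed $p$. Consequently, for every $p \in [0,1]^N$ and $x \in \bX$, the set
$$A_x(p) := \arg\max_{p'_x \in [0,1]} G^{\lambda}(x, p'_x, p)$$
is nonempty by compactness of $[0,1]$. Moreover, since $G^\lambda(x,\cdot,p)$ is affine, $A_x(p)$ is either $\{0\}$, $\{1\}$, or the whole interval $[0,1]$, depending on the sign of $V^{\lambda}_S(x) - V^{\lambda}_C(x, p)$. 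In all cases $A_x(p)$ is convex, so $\Psi(p) = \prod_{x \in \bX} A_x(p)$ is nonempty and convex.

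The key step is showing that $\Psi$ has a closed graph. Take sequences $p^n \to p$ and $\tilde{p}^n \to \tilde{p}$ in $[0,1]^N$ with $\tilde p^n \in \Psi(p^n)$; I want to conclude that $\tilde p \in \Psi(p)$. By the preceding lemma, $p \mapsto V^{\lambda}_C(x,p)$ is continuous on $[0,1]^N$, and $V^{\lambda}_S(x)$ does not depend on $p$. Therefore $G^{\lambda}(x, p'_x, p)$ is jointly continuous in $(p'_x, p)$. For each $x$ and each $q \in [0,1]$,
$$G^{\lambda}(x, \tilde p^n_x, p^n) \ge G^{\lambda}(x, q, p^n), \quad \forall n.$$
Passing to the limit using joint continuity yields $G^{\lambda}(x, \tilde p_x, p) \ge G^{\lambda}(x, q, p)$ for every $q \in [0,1]$, so $\tilde p_x \in A_x(p)$ for each $x$, giving $\tilde p \in \Psi(p)$. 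This is essentially Berge's maximum theorem applied to each coordinate.

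With nonempty convex values and a closed graph on the nonempty compact convex domain $[0,1]^N$, Kakutani's fixed point theorem yields the existence of $p^* \in [0,1]^N$ with $p^* \in \Psi(p^*)$, which by the proposition preceding the theorem is the desired regular randomized equilibrium. The main obstacle is the closed-graph property, and the only nontrivial ingredient needed there is the continuity of $p \mapsto V^{\lambda}_C(x,p)$, which in turn is a consequence of the entropy regularization smoothing the follower's best response $q^*_x(p)$ — a property that fails in the unregularized setting and is precisely what causes non-existence in Proposition \ref{prop:ex_inf_horz}.
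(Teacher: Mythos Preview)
Your proposal is correct and follows essentially the same approach as the paper: apply Kakutani's theorem to $\Psi$ on $[0,1]^N$, using the affine structure of $G^\lambda(x,\cdot,p)$ for nonempty convex values and the continuity of $p\mapsto V^\lambda_C(x,p)$ for the closed-graph property. Your closed-graph argument passes to the limit directly in the optimality inequality, whereas the paper argues by cases on the sign of $V^\lambda_S(x)-V^\lambda_C(x,p)$, but these are equivalent given the affine structure and amount to the same proof.
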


Thanks to Theorem \ref{thm}, although the existence of an exact equilibrium for the original problem \eqref{eq:game_exact_equilibrium} is not guaranteed, the existence of a regular equilibrium ensures that an approximation to the exact equilibrium (i.e., an $\varepsilon$-equilibrium) always exists.

\begin{proposition}\label{prop:approximation}
For any $\varepsilon>0$, there exists $\lambda_0:=\frac{(1-\delta_2)\epsilon}{\log 2}>0$ such that for any $\lambda\in(0,\lambda_0)$, the regular randomized equilibrium w.r.t. $\lambda$ is an $\varepsilon$-equilibrium in the sense of Definition \ref{def:epsilon_equilibrium} for the game \eqref{eq:game_exact_equilibrium}.
\end{proposition}

\begin{remark}\label{reapp}
Note that the existence of $\varepsilon$-equilibrium cannot be obtained by an argument involving discretizing the state space, because we do not assume the payoff functions are continuous in states.
\end{remark}

\subsection{Special Cases and Application Example}\label{sec:special_case}
\subsubsection{Convergence of regular equilibrium in the finite state space}

In this subsection, we consider the special case in which the state space $\bX$ is finite. Without loss of generality, let $\bX = \{1, \cdots ,N\}$ for some $N \in \bN$. 
When the state space is finite, as the entropy parameter $\lambda \to 0$, the family of regular randomized equilibria $\{p^{\lambda}\}_{\lambda>0}$ admits at least one limit point $p^*$. Moreover, any such limit point is an exact feedback equilibrium as defined in  Definition~\ref{def:feedback_equilibrium_general} for the unregularized problem \eqref{eq:game_exact_equilibrium}. To illustrate this, we explicitly construct an exact feedback equilibrium for the counterexample in the proof of Proposition~\ref{prop:ex_inf_horz}. In contrast, these observations do not extend to the case of an uncountable state space. This limitation highlights the necessity of introducing the entropy regularization term in the general setting considered in the previous section. 

Recall that the follower essentially  only need to solve for his optimal response conditional on the leader continuing at the current step. When the leader stops at the current step, the follower's optimal response is independent of the leader's strategy $p$ and can be determined directly. Accordingly, throughout the remainder of this subsection we define the follower's response in this case by
\begin{equation} \label{eq:r_star_new}
r^*_x =
\begin{cases}
    1, & \text{if } g_2(x) < h_2(x),\\[0.3em]
    \frac{1}{2}, & \text{if } g_2(x) = h_2(x),\\[0.3em]
    0, & \text{if } g_2(x) > h_2(x),
\end{cases}
\qquad \forall x \in \bX.
\end{equation}
In contrast to the earliest optimal stopping time \(\one_{\{g_2(x) \le h_2(x)\}}\), the policy \(r^*\) defined in \eqref{eq:r_star_new} is a natural choice when the earliest-stopping restriction is lifted, as it coincides with the limit of \(r^{*,\lambda}\) in \eqref{eq:r_star} as \(\lambda \to 0\), and also satisfies the requirement in Proposition \ref{prop:feedback_char}. Moreover, for any sequence \(\{\lambda_n\}_{n \in \bN}\) satisfying \(\lambda_n \to 0\), we have
\begin{equation} \label{eq:vs_limit}
\lim_{n \to \infty} V_S^{\lambda_n}(x)
= \lim_{n \to \infty}r^{*,\lambda_n}_x h_1(x) + \bigl(1 - r^{*,\lambda_n}_x\bigr) f_1(x)
= r^{*}_x h_1(x) + \bigl(1 - r^{*}_x\bigr) f_1(x)
= V_S(x).
\end{equation}
   
The following proposition shows that the family of regular randomized equilibria $\{(p^{\lambda}, (r^{*, \lambda},q^{*, \lambda})\}_{\lambda>0}$ admits at least one limit point $(p^*, (r^*,q^*))$ that satisfies \eqref{eq:j2condition} and \eqref{eq:j1condition}, and hence prove the existence of exact feedback equilibrium.
    
\begin{proposition}\label{prop:feedback}
Suppose the state space is finite. Let $\{\lambda_n\}_{n\in \bN}$ be any sequence with $\lambda_n \to 0$, and for each $n$ let $p^n$ be a regular randomized equilibrium of the entropy-regularized Stackelberg stopping game with parameter $\lambda_n$. Then there exists a limit point of the sequence
$\left\{ \left(p^n,\; (r^{*, \lambda_n},q^{*,\lambda_n}(p^n))\right)\right\}_{n}$
that constitutes a randomized feedback equilibrium as defined in Definition \ref{def:feedback_equilibrium_general}.
\end{proposition}

\begin{remark}
As an alternative to studying limit points of regular randomized equilibria, the existence of an exact randomized feedback equilibrium in the finite-state space case can be established directly via Kakutani’s fixed-point theorem. More precisely, a pair $(p^*, q^*) \in [0,1]^N \times [0,1]^N$ satisfies \eqref{eq:j2condition} and \eqref{eq:j1condition} if and only if it is a fixed point of the set-valued map $\hat\Psi : [0,1]^N \times [0,1]^N \to 2^{[0,1]^N \times [0,1]^N}$
defined by
\begin{equation*}
\begin{split}
\hat \Psi (p,q):=
\Bigl\{ (\tilde p,\tilde q) \in [0,1]^N \times [0,1]^N \,\Big|\, 
\tilde q \in \arg\max_{q' \in [0,1]^N} J_2(x, \tau^p, \tau^{q'\oplus_1 q,r^*}),
\tilde p \in \arg\max_{p' \in [0,1]^N} J_1(x, \tau^{p' \oplus_1 p}, \tau^{q,r^*}), 
\ \forall x \in \bX
\Bigr\}.
\end{split}
\end{equation*}
As $\bX$ is finite, the maximizers $\tilde q$ (respectively $\tilde p$) of $J_2$ (respectively $J_1$) are attained for any fixed $(p,q)$, and $V_C$ is continuous, Kakutani’s fixed-point theorem applies on the compact, convex set $[0,1]^N \times [0,1]^N$, yielding the existence of such a fixed point.
\end{remark}

\begin{remark}\label{rmk:uncountable}
    The convergence result and existence proof of a feedback equilibrium does not carry over to the general Polish state space. The main difficulty lies in the lack of compactness and continuity properties in infinite-dimensional spaces. In particular, even if $p^n \xrightarrow{w*} p$ and $q^n \xrightarrow{w*} q$, this does not imply convergence of $\bE_x[p^n_{X_1} q^n_{X_1}]$, since weak-* convergence preserves integrals against fixed functions but not products of weak-* convergent sequences. Consequently, the set-valued map $\hat \Psi$ may fail to have a closed graph, preventing a direct application of Kakutani’s fixed-point theorem.
\end{remark}

\begin{remark}
Recall that the example in Proposition~\ref{prop:ex_inf_horz} does not admit an exact equilibrium under Definition~\ref{def:infinite_equilibrium}. However, there always exists an exact feedback equilibrium according to Proposition~\ref{prop:feedback}. Specifically, one can verify that there exists a unique exact randomized feedback equilibrium for this example, and the equilibrium is given by $p= (0,p_b, 0), q = (q_a, 0, 1)$, where $p_b$ and $q_a$ are uniquely determined by 
\begin{align*}
&p_b = \frac{\delta_2(\pi_{ba}K + \pi_{bb}W(b) + \pi_{bc}K^2) - W(b)}{\delta_2(\pi_{ba}K + \pi_{bb}W(b) + \pi_{bc}K^2) - 2} \in (0,1), \text{ where }
 W(b) = \frac{K (1-\delta_2 \pi_{aa})}{\delta_2 \pi_{ab}} >2;\\
&q_a = \frac{V(a) - \delta_1 (\pi_{aa}V(a) + \pi_{ab}K)}{K^2 - \delta_1 (\pi_{aa}V(a) + \pi_{ab}K)} \in (0,1), \text{ where }
V(a) = \frac{K - \delta_1 \pi_{bb}K - 2\delta_1 \pi_{bc}}{\delta_1 \pi_{ba}} \le K^2.
\end{align*} 
\end{remark}

\subsubsection{Application to game option with a dominant player }\label{sec:application} Let us consider a Stackelberg variant of the game option example in \cite{HZ09}. In the classical setting of game options, not only does the holder have the right to exercise the option at a chosen time, but the issuer is also allowed to recall or cancel the contract. In the Stackelberg variant, we assume that the issuer is the dominant player and announces her strategy in advance.

Let $X_t$ denote the price of the underlying asset. Suppose that the issuer stops at time $\tau$ and the holder stops at time $\rho$. The payoff received by the holder from the issuer is given by \[
\Gamma(\tau, \rho) := L(X_{\rho}) \one_{\rho<\tau} + U(X_{\tau}) \one_{\rho>\tau} + M(X_{\tau}) \one_{\tau = \rho}.
\]   
Here, $L(X_{\rho})$ (resp. $U(X_{\tau})$, $M(X_{\tau})$) denotes the amount received by the holder when he stops before (resp. after, simultaneously with) the issuer.


As discussed in \cite{HZ09}, we introduce utility functions $\varphi_1$ and $\varphi_2$ for the issuer and the holder, respectively. The Stackelberg stopping game can then be formulated as problem \eqref{eq:game_exact_equilibrium}, with payoff functions given by
\begin{equation*}
    \begin{split}
        &f_1(x) = \varphi_1(-U(x)), \quad g_1(x) = \varphi_1(-L(x)), \quad h_1(x) = \varphi_1(-M(x));\\
        &f_2(x) = \varphi_2(U(x)), \quad g_2(x) = \varphi_2(L(x)), \quad h_2(x) = \varphi_2(M(x)).
    \end{split}
\end{equation*}

The following theorem shows that, under suitable conditions, this Stackelberg stopping game admits an exact equilibrium.

\begin{theorem} \label{thm:strict_condition}
    Denote by $A(p)$ the indifference set of the follower corresponding to the leader’s strategy $p$, namely,
\[
A(p): = \left\{ x\in \bX: f_2(x) = \delta_2 \bE_x\left[ p_{X_1} W_S(X_1)+ (1-p_{X_1}) W_C(X_1, p)\right]\right\},
\]
where $W_S$ and $W_C$ are defined in \eqref{eq:W_S} and \eqref{eq:w_markov_bellman}, respectively.

If $\mu(A(p)) = 0$ for all $p \in \cR_0$, then there exists an exact equilibrium.
\end{theorem}

The following example satisfies the condition in the theorem above.

\begin{proposition} \label{cor:example}
 Let $\X=\mathbb{R}$ and $\mu$ be the Lebesgue measure. Suppose that
\[
X_{t+1}=X_t e^{\beta + \sigma Z_{t+1}}, \qquad t \in \bN,
\]
where $\beta,\sigma$ are constants and $Z_1,Z_2,\dotso$ are i.i.d. standard normal random variables. Assume further that $f_2(x)=C_0$ for all $x\in \bX$ and $\max\{g_2(x), h_2(x)\} < C_0/\delta_2$.
    Then \[
    \mu(A(p)) = 0, \quad \forall p \in \cR_0.
    \]
    Consequently, there exists an exact equilibrium.
\end{proposition}

\section{Proof of Main Results} \label{sec:proof}

\subsection{Proofs in Section \ref{sec:infinity_precommit}}

\begin{proof}[Proof of Lemma \ref{lemma:continous_in_Q}]
For any $x \in \bX, P, Q \in \cR$, by definition,\begin{align*}
J_2(x, P, Q) =& \bE_x\left[\delta_2^{\tau^P\wedge\tau^{Q^*_S,Q}}F_2(\tau, \tau^{Q^*_S,Q^n})\right]\\
 = &\sum_{t = 0}^{\infty} \bE_x \left[\left(\delta_2^{\tau^{Q}}f_2(X_{\tau^{Q}}) \one_{\{\tau^Q < t\}} +\delta_2^t W_S(X_t) \one_{\{\tau^Q \ge t\}} \right) \one_{\{\tau^P = t\}} \right]
\end{align*}
Fix $x \in \bX, P \in \cR$, and take $Q^n, Q \in \cR$ with $Q^n \to Q$. Let $\epsilon >0$. Recall $K$ given in Assumption~\ref{asp:bounded_fgh}. Choose $M\in\bN$ such that  $\delta_2^MK\leq\epsilon$. We have that
\begin{align*}
&|J_2(x,P, Q^n)-J_2(x,P,Q)|\\
\le &\sum_{t = 0}^{\infty}\bE_x \one_{\{\tau^P = t\}} \left\{\left|\delta_2^{\tau^{Q^n}}f_2(X_{\tau^{Q^n}}) \one_{\{\tau^{Q^n} < t\}} - \delta_2^{\tau^{Q}}f_2(X_{\tau^{Q}}) \one_{\{\tau^Q < t\}} \right| + \left|\delta_2^t W_S(X_t) \one_{\{\tau^{Q^n} \ge t\}} - \delta_2^t W_S(X_t) \one_{\{\tau^{Q} \ge t\}} \right|\right\}  \\
\le & \sum_{t = 0}^{\infty} \sum_{s = 0}^{t-1} \bE_x \one_{\{\tau^P = t\}}\left|\delta_2^{s}f_2(X_{s})\left( \one_{\{\tau^{Q^n} = s\}} - \one_{\{\tau^Q = s\}}  \right)\right| + K \sum_{t = 0}^{M}\bE_x \delta_2^t  \left|\one_{\{\tau^{Q^n} \ge t\}} - \one_{\{\tau^{Q} \ge t\}} \right| + \epsilon\\
\le & K\sum_{s = 0}^{M}  \bE_x \one_{\{\tau^P \ge s+1\}}\delta_2^{s} \left| \one_{\{\tau^{Q^n} = s\}} - \one_{\{\tau^Q = s\}}  \right| + K \sum_{t = 0}^{M}\bE_x \delta_2^t  \left|\one_{\{\tau^{Q^n} \ge t\}} - \one_{\{\tau^{Q} \ge t\}} \right| + 2 \epsilon \\
\le & K\sum_{s = 0}^{M}  \bE_x \left[\left| \one_{\{\tau^{Q^n} = s\}} - \one_{\{\tau^Q = s\}}  \right| + \left|\one_{\{\tau^{Q^n} \ge s\}} - \one_{\{\tau^{Q} \ge s\}} \right|\right] + 2 \epsilon 
\end{align*}

By induction, one can show that if $Q^n \to Q$, then for any $k \in \bN$ and $i_1, \cdots, i_k \in \bN_0$, \[
\bE_x \left|\prod_{m = 1}^kQ_{i_m}^n - \prod_{m = 1}^k Q_{i_m} \right| \le \sum_{m = 1}^k \bE_x  \left|Q_{i_m}^n -  Q_{i_m} \right|,\quad \forall n \in \bN_0.
\] 
For each $ i_m \in \bN_0$,\[
\bE_x  \left|Q_{i_m}^n -  Q_{i_m} \right| \le \|\pi\|_{\infty} \int_{\bX^{i_m+1}}  \left|Q^n_{i_m}(y_0, \cdots, y_{i_m}) -  Q_{i_m}(y_0, \cdots, y_{i_m}) \right|\mu(d y_0) \cdots \mu(d y_{i_m}) \to 0, \quad n \to \infty.
\]
Then for each $s\le M$,\[
\bE_x\left|\one_{\{\tau^{Q^n}=s\}}-\one_{\{\tau^{Q}=s\}}\right| = \bE_x \left| Q^n_s\prod_{k = 0}^{s-1} \left(1-Q^n_k\right) -  Q_s\prod_{k = 0}^{s-1} \left(1-Q_k\right)\right| \to 0,\quad n\to\infty.
\]
Similarly,\[
\bE_x \left|\one_{\{\tau^{Q^n}> M\}}-\one_{\{\tau^{Q}> M\}}\right| \to 0,\quad n\to\infty.
\]
By the arbitrariness of $\epsilon$, the continuity result holds. Note that
\begin{equation}\label{eq112}
W(x,P)=\sup_{Q\in\cR} J_2(x,P,Q)=\sup_{n\in\mathbb{N}} J_2(x,P,Q^n),
\end{equation}
where $\{Q^n\}_{n\in\mathbb{N}}$ is a countable dense subset of $\cR$. Then $(x,P)\mapsto W(x,P)$ is Borel measurable. 

\end{proof}

\begin{proof}[Proof of Lemma \ref{lemma:WV_iteration}]
By definition, for a fixed $P \in \bR$ and $x \in \bX$,
\begin{align*}
W_C(x,P) = & \sup_{Q \in \cR} Q_{0,x} f_2(x) + (1-Q_{0,x}) \delta_2 \bE_x[J_2(X_1, P, \theta_x \circ Q)]\\
= & \sup_{Q_0 \in \cR_0} Q_{0,x} f_2(x) + (1-Q_{0,x}) \delta_2  \sup_{\mathbf{Q}\in \cR^{\bX}}\bE_x[J_2(X_1, P, \mathbf{Q}(X_1))]\\
= & \max\left\{ f_2(x),  \delta_2  \sup_{\mathbf{Q}\in \cR^{\bX}}\bE_x[J_2(X_1, P, \mathbf{Q}(X_1))] \right\}.
\end{align*}
%
Using a measurable selection argument (see e.g., \cite{doi:10.1137/0315056}), we can show that
$$\sup_{\mathbf{Q}\in \cR^{\bX}}\bE_x\left[J_2(X_1, P, \mathbf{Q}(X_1))\right]  = \bE_x\left[ \sup_{Q \in \cR}J_2(X_1, P, Q)\right] = \bE_x [W(X_1, P)].$$
Therefore, the follower's expected payoff function $W_C$ satisfies the Bellman equation in \eqref{w_bellman}.
 Obviously, $ \bE_x[W(X_1, P)] $ admits the integral representation in \eqref{eq111}.
\end{proof}

\begin{proof}[Proof of Lemma \ref{lemma:w_continuous}]
Fix $x\in\bX$ and take $P^n,P\in\mathcal{R}$ with $P^n\to P$. Let $\epsilon>0$. Recall $K$ given in Assumption~\ref{asp:bounded_fgh}. Choose $M\in\bN$ such that  $\delta_2^MK\leq\epsilon$. We have that
\begin{align*}
&|W_C(x,P^n)-W_C(x,P)|\\
&\leq\sup_{\rho\in\bN_0}\bE_x\left|\delta_2^{\tau^{P^n}\wedge\rho}F_2(\tau^{P^n},\rho)-\delta_2^{\tau^{P}\wedge\rho}F_2(\tau^{P},\rho)\right|\\
&=\sup_{\rho\in \bN_0}\bE_x\bigg|\delta_2^{\tau^{P^n}\wedge\rho}F_2(\tau^{P^n},\rho)\left(\one_{\{\tau^{P^n}\leq M\}}+\one_{\{\tau^{P^n}> M,\rho\leq M\}}+\one_{\{\tau^{P^n}> M,\rho> M\}}\right)\\
&\quad\quad-\delta_2^{\tau^{P}\wedge\rho}F_2(\tau^{P},\rho)\left(\one_{\{\tau^{P}\leq M\}}+\one_{\{\tau^{P}> M,\rho\leq M\}}+\one_{\{\tau^{P}> M,\rho> M\}}\right)\bigg|\\
&\leq\sup_{\rho\in\bN_0}\bE_x\left|\sum_{s\leq M}\delta_2^{s\wedge\rho}F_2(s,\rho)\left(\one_{\{\tau^{P^n}=s\}}-\one_{\{\tau^{P}=s\}}\right)+\delta_2^\rho f_2(\rho)\left(\one_{\{\tau^{P^n}> M\}}-\one_{\{\tau^{P}> M\}}\right)\one_{\{\rho\leq M\}}\right|+2\epsilon\\
&\leq K\sup_{\rho\in\bN_0}\bE_x\left[\sum_{s\leq M}\left|\one_{\{\tau^{P^n}=s\}}-\one_{\{\tau^{P}=s\}}\right|+\left|\one_{\{\tau^{P^n}> M\}}-\one_{\{\tau^{P}> M\}}\right|\right]+2\epsilon.
\end{align*}

By adapting the argument in the proof of Lemma~\ref{lemma:continous_in_Q}, for each $s\le M$,\[
\bE_x\left|\one_{\{\tau^{P^n}=s\}}-\one_{\{\tau^{P}=s\}}\right| \to 0,
\quad \text{ and } \quad \quad \bE_x \left|\one_{\{\tau^{P^n}> M\}}-\one_{\{\tau^{P}> M\}}\right| \to 0,\quad n\to\infty.
\]
By the arbitrariness of $\epsilon$, the result holds.
\end{proof}

\begin{proof}[Proof of Lemma \ref{lemma:D_x}]
The Borel measurability of $\overline w$ and $\underline w$ can be proved using an argument similar to that in \eqref{eq112}. By Lemma \ref{lemma:WV_iteration}, $W_C(x, P)$ satisfies
\begin{equation*}
W_C(x,P) = \max \left\{ f_2(x), \delta_2 \bE_x[P_0(X_1)W_S(X_1) + (1-P_0(X_1))W_C(X_1, \theta_{X_1} \circ P)] \right\}.
\end{equation*}
Taking the infimum over all $P \in \cR$ on both sides yields 
\begin{align*}
    \underline{w}_x =&  \inf_{P \in \cR} \max \left\{ f_2(x), \delta_2 \bE_x[P_0(X_1)W_S(X_1) + (1-P_0(X_1))W_C(X_1, \theta_{X_1} \circ P)] \right\}\\
    = & \max  \left\{ f_2(x), \delta_2  \inf_{P_0 \in \cR_0, \mathbf{P}\in \cR^{\bX}}\bE_x[P_0(X_1)W_S(X_1) + (1-P_0(X_1))W_C(X_1, \mathbf{P}(X_1)] \right\}.
\end{align*}
Using a measurable selection argument, we obtain
\begin{align*}
&\inf_{P_0 \in \cR_0, \mathbf{P}\in \cR^{\bX}}\bE_x[P_0(X_1)W_S(X_1) + (1-P_0(X_1))W_C(X_1, \mathbf{P}(X_1)] \\
=& \bE_x\left[ \inf_{P_0 \in \cR_0}P_0(X_1)W_S(X_1) + (1-P_0(X_1)) \inf_{\mathbf{P}\in \cR^{\bX} }W_C(X_1, \mathbf{P}(X_1)\right]\\
=& \bE_x\left[ \inf_{P_0 \in \cR_0}P_0(X_1)W_S(X_1) + (1-P_0(X_1))  \underline{w}_{X_1}\right]\\
=& \bE_x\left[ \min\left\{W_S(X_1),  \underline{w}_{X_1}\right\}\right].
\end{align*}
In the last equation, the infimum is attained at 
\begin{equation}\label{eq113}
P_0^*(x) = \underline p_x := \one_{\{W_S(x) \le \underline{w}_{x}\}}, \quad x \in \bX.
\end{equation}
Thus we obtain equation \eqref{e2}.  Similarly, $\overline{w}_x$ satisfies \eqref{e3}.

To show \eqref{e2} admits a unique solution, we consider the following operator $\underline{\Phi}: \bR^{\bX} \to \bR^{\bX}$, where $\bR^{\bX}$ denotes the space of all real-valued functions on $\bX$, and is endowed with the supremum norm $\| {w}\|= \sup_{x\in \bX} |w_x|$.
\begin{equation*}
\underline{\Phi}[{w}]_x :=  \max\left\{f_2(x), \delta_2 \int_{\bX} \left(W_S(y)\wedge w_y\right) \Pi(x,dy)\right\}, \quad \forall x \in \bX.
\end{equation*}
Let $\tilde{{w}}$ and $\hat{{w}}$ be two arbitrary functions in $\bR^{\bX}$. We have that

\begin{equation*}
\left|\underline{\Phi}[\hat{w}]_x  - \underline{\Phi}[\tilde{w}]_x\right| \le \delta_2\int_{\bX} |\hat{w}_y - \tilde{w}_y| \Pi(x, dy) \le \delta_2 \|\tilde{w}-\hat{w}\|.
\end{equation*}
Therefore, $\|\underline{\Phi}[\tilde{w}] - \underline{\Phi}[\hat{w}]\| \le \delta_2\|\tilde{w}-\hat{w}\|$. 
By the Banach fixed-point theorem, \eqref{e2} admits a unique solution. Similarly we can show that \eqref{e3} has a unique solution.

To show that $D_x = [\underline{w}_x, \overline{w}_x]$, we first show that the supremum and infimum in the definition of $\overline w_x$ and $\underline w_x$ are attained, and  then we show that for any $w\in[\underline w_x,\overline w_x]$, there exists $P\in\mathcal{R}$ such that $W_C(x,P)=w$. 

Indeed, the optimal values for $\underline{w}_x$ and $\overline{w}_x$ are attained by pure Markov strategies. To see this, consider the pure Markov strategy $\underline p$ defined in \eqref{eq113}.  The above argument shows that $\underline w$ is a solution to the following equation,\[
\underline w_x = \max \left\{ f_2(x), \delta_2 \bE_x\left[\underline p_{X_1}W_S(X_1) + (1-\underline p_{X_1})\underline w_{X_1}\right] \right\} , \quad \forall x \in \bX.
\]By Lemma \ref{lemma:WV_iteration}  , the utility function $W_C(\cdot, \underline p)$  satisfies the same equation.  A similar argument shows that this equation admits a unique solution. Therefore,
\[
W_C(x, \underline{p}) = \inf_{p \in \cR_0}W_C(x, p) = \underline{w}_x, \quad \forall x \in \bX.
\]
The same reasoning applies to $\overline w_\cdot$. 

Fix $w\in[\underline w_x,\overline w_x]$. Let $\overline P,\underline P\in\mathcal{R}$ be such that
$$W_C(x,\overline P)=\overline w_x\quad\text{and}\quad W_C(x,\underline P)=\underline w_x.$$
For $\eta\in[0,1]$, define $P^\eta\in\mathcal{R}$ by
$$P_t^\eta:=\eta\underline P_t+(1-\eta)\overline P_t,\quad\forall\,t\in\bN_0.$$
Then $\eta\mapsto P^\eta$ is continuous. By Lemma~\ref{lemma:w_continuous}, there exists $\eta^*\in[0,1]$ such that $W_C(x,P^{\eta^*})=w$.
\end{proof}

\begin{proof}[Proof of Theorem \ref{thm:v_bellman}]
The boundedness of $v$ follows immediately from Assumption~\ref{asp:bounded_fgh}. As the map $(x,P)\mapsto V(x,P)$ is Borel measurable and the set 
$$\text{Graph}(\cL):=\{(x,w,P)\in D\times\cR\,|\,W_C(x,P)=w\}$$
is Borel measurable, we know that $v$ is upper-semianalytic.

We first show that $v(x,w)$ satisfies \eqref{eq:v_bellman}. For each $P \in \cR$ such that $W_C(x,P) = f_2(x)$, we have $V_C(x, P) =  g_1(x)$. By definition, $v_x(f_2(x)) = g_1(x)$. For the rest of the proof, we assume $w\in D_x\cap(f_2(x),\infty)$. We now establish the two inequalities that together form the Bellman equation \eqref{eq:v_bellman}.

``$\le$". Fix $x$ and $w \in D_x \cap (f_2(x), \infty)$. For each $P \in \cL(x,w)$, let $\hat{w}_y = W_C(y, \theta_y \circ P)$ for $y \in \bX$. Then $(\hat{{w}}_{\cdot}, P_0(\cdot)) \in \cA(x,w)$ and  
\begin{equation*}
\begin{split}
V_C(x,P) &=\delta_1 \int_{\bX} [P_0(y) V_S(y) +(1-P_0(y)) V_C(y, \theta_y \circ P)] \Pi(x,dy) \\ 
& \le  \delta_1 \int_{\bX} [P_0(y) V_S(y) +(1-P_0(y)) v(y,\hat{w}_y)]  \Pi(x,dy)\\ 
& \le  \sup_{({w}'_{\cdot},{p})\in \cA(x,w)} \delta_1 \int_{\bX} [p_y V_S(y) + (1-p_y) v(y,w'_y)] \Pi(x,dy).
\end{split}
\end{equation*}
Since the above holds for all $P \in \cL(x,w)$, we have 
\begin{equation*}
v(x,w) = \sup_{P\in \cL(x,w)} V_C(x,P) \le  \sup_{({w}'_{\cdot}, {p})\in \cA(x,w)} \delta_1 \int_{\bX} [p_y V_S(y) + (1-p_y) v(y,w'_y)] \Pi(x,dy).
\end{equation*}

"$\ge$". For each $\epsilon > 0$, there exists $(\hat{{w}}_{\cdot}, \hat{p}) \in \cA(x,w)$ such that 
\begin{equation*}
\text{RHS of \eqref{eq:v_bellman} } \le  \delta_1 \int_{\bX} [\hat{p}(y) V_S(y) +(1-\hat{p}(y)) v(y,\hat{w}_y)] \Pi(x,dy) + \epsilon.
\end{equation*}
Since
$$\text{Graph}(\cL(\cdot, \hat w_\cdot))=\{(y,P):\ W_C(y,P)=\hat w_y\}\subset \X\times\cR$$
is Borel measurable,  by measurable selection (see e.g., \cite{doi:10.1137/0315056}) there exists Borel measurable $\mathbf{\hat{P}}: \X \to\cR$ such that $ \mathbf{\hat{P}}(y)\in \cL(y, \hat{w}_y)$ for any $y \in \bX$,  and 
\begin{equation*}
V_C(y, \mathbf{\hat{P}}(y)) \ge v(y,\hat{w}_y) - \epsilon,\quad \mu\text{-a.s.}\ y\in\X.
\end{equation*}
Let ${P} = \hat{p} \oplus \mathbf{\hat{P}} \in \cR$. Then ${P} \in \cL(x,w)$ since $w = \Theta_x(\hat{w}_{\cdot},\hat{p}) = W_C(x, {P})$.
Therefore, 
\begin{equation*}
\begin{split}
\text{RHS of \eqref{eq:v_bellman} } &\le  \delta_1 \int_{\bX} [\hat{p}_y V_S(y) +(1-\hat{p}_y) V_C(y, \mathbf{\hat{P}}(y))] \Pi(x,dy) + 2\epsilon \\ 
&=   V_C(x,{P}) + 2\epsilon \le v(x,w_x) + 2\epsilon.
\end{split}
\end{equation*}
Letting $\epsilon \to 0$, we obtain the desired result.

Next we show that \eqref{eq:v_bellman} admits a unique solution. Suppose that $\tilde{u}, \hat{u}: D \to \bR$ are two bounded and Borel measurable functions that satisfy equation \eqref{eq:v_bellman}. Then for each $x \in \bX$ and each $w \in D_x$, we have 
\begin{equation*}
\left|\hat{u}(x,w) - \tilde{u}(x,w)\right| \le \sup_{({w}'_{\cdot}, p)\in \cA(x,w)} \delta_1 \int_{\bX}(1-p_y)\left|\hat{u}(y,w'_y) - \tilde{u}(y,w'_y)\right| \Pi(x,dy)\le \delta_1 \|\hat{u} - \tilde{u}\|_{\infty}.
\end{equation*}
Therefore, \[
 \|\hat{u} - \tilde{u}\|_{\infty} = \sup_{x\in \bX}\sup_{w \in D_x} \left|\hat{u}(x,w) - \tilde{u}(x,w)\right| \le \delta_1 \|\hat{u} - \tilde{u}\|_{\infty}.
\]
Since $\delta_1<1$, we have $\|\hat{u} - \tilde{u}\|_{\infty} = 0$ and the solution to \eqref{eq:v_bellman} is unique.
\end{proof}

\subsection{Proof in Section \ref{sec:infinity_equilibrium}}

\begin{proof}[Proof of Proposition \ref{prop:ex_inf_horz}]
We prove the proposition by presenting a counterexample in which no randomized equilibrium strategy exists. This example, originally constructed in \cite{MR4397932}, is a variation of a war-of-attrition-type game.

Let $\bX = \{a,b,c\}$ and consider the transition matrix $\Pi = (\pi_{xy})_{x,y \in \bX}$, where all $\pi_{xy} > 0$ except $\pi_{ac} = 0$. Suppose the functions are given by the following equations: 
\[
\begin{array}{llllll}
f_1(a) = 1, &f_1(b) = K, &f_1(c) = 1, &f_2(a) = K, &f_2(b) = 1, &f_2(c) = K^2,\\
g_1(a) = K^2, &g_1(b) = K+1, &g_1(c) = 2, &g_2(a) = K+1, &g_2(b) = 2, &g_2(c) = K^2+1,
\end{array}
\]
where $K>0$ is a constant. We assume $h_i(x) = \frac{1}{2}\left(f_i(x) + g_i(x)\right)$ for $i \in \{1,2\}, x \in \bX$.

We will show that no relaxed equilibrium exists if $K$ is large enough.

Note that the functions satisfy the inequality $f_i(x) < h_i(x) < g_i(x)$ for all $x\in \bX, i \in \{1,2\}$. It implies that for every ${p} \in [0,1]^3$, we have
\begin{equation*}
W_S(x) = g_2(x), V_S(x) = f_1(x), \forall x \in \{a,b,c\}.
\end{equation*}

Recall that $V(x,{p}) = p_x V_S(x) + (1-p_x) V_C(x, {p})$ and $W(x,{p}) = p_x W_S(x) + (1-p_x) W_C(x, {p})$, where $W_C$ and $V_C$ satisfy equations~\eqref{eq:w_markov_bellman} and \eqref{eq:v_markov_bellman}, respectively.

We now derive a set of  necessary  conditions that any equilibrium strategy must satisfy, and subsequently show that no Markov strategy meets these conditions. To that end, suppose ${p}$ is an equilibrium strategy. In what follows, we simplify notation by writing $V(x)$ and $W(x)$ in place of $V(x, {p})$ and $W(x, {p})$, respectively. For each $x\in \{a,b,c\}$, we consider the three scenarios below:

\begin{itemize}
	\item[1.] $p_x = 0$. We have
	\begin{align*}
	W(x) &= W_C(x,{p}) = \max \left\{f_2(x), \delta_2 \sum_{y\in \bX}\pi_{xy}W(y) \right\},\\
	V(x) &= V_C(x,{p}) = g_1(x) \one_{\{W(x)= f_2(x)\}} + \delta_1 \sum_{y\in \bX}\pi_{xy}V(y) \one_{\{W(x)> f_2(x)\}}.
	\end{align*}
	Since $V(x, {p}'\oplus_1 {p}) = V_S(x) = f_1(x)$ when $p'_x = 1$, the equilibrium condition $V(x) \ge V(x, {p}'\oplus_1 {p})$ implies that
	\begin{equation*}
	\begin{cases}
	g_1(x) > f_1(x),   &\text{ if } W(x)= f_2(x),\\
	\delta_1 \sum_{y\in \bX}\pi_{xy}V(y) \ge f_1(x), &\text{ if } W(x)> f_2(x).
	\end{cases}
	\end{equation*}

	\item[2.] $p_x = 1$. We have 
	\begin{align*}
	W(x) = W_S(x) = g_2(x) \quad \text{ and } \quad V(x) =V_S(x) = f_1(x) .
	\end{align*}
	Note that $V(x,{p}'\oplus_1 {p}) = V_C(x,{p})$ when $p'_x = 0$. If $V(x) \ge V(x, {p}'\oplus_1 {p})$, we have 
	\begin{equation*}
	f_1(x) \ge g_1(x) \one_{\{W_C(x, {p})= f_2(x)\}} + \delta_1 \sum_{y\in \bX}\pi_{xy}V(y) \one_{\{W_C(x, {p})> f_2(x)\}}.
	\end{equation*}
	Since $f_1(x) < g_1(x)$, the above inequality implies that 
	\begin{equation*}
	W_C(x, {p}) > f_2(x) \text{ and }   V(x)=f_1(x) \ge \delta_1 \sum_{y\in \bX}\pi_{xy}V(y).
	\end{equation*}

	\item[3.] $p_x \in (0,1)$. We have 
	\begin{align*}
	W(x) = p_x W_S(x) + (1-p_x) W_C(x, {p})  \quad \text{ and } \quad  V(x) = p_x V_S(x) + (1-p_x) V_C(x, {p}).
	\end{align*}
	Since $V(x) \ge V(x, {p}'\oplus_1 {p})$ when $p'_x \in \{0,1\}$, we have 
	\begin{equation*}
	V_S(x) = V_C(x, {p}) \Rightarrow f_1(x) = g_1(x) \one_{\{W_C(x, {p})= f_2(x)\}} + \delta_1 \sum_{y\in \bX}\pi_{xy}V(y) \one_{\{W_C(x, {p})> f_2(x)\}}.
	\end{equation*}
	Again, since $f_1(x) < g_1(x)$, the above equality implies that 
	\begin{equation*}
	W_C(x, {p})> f_2(x) \text{ and }   V(x)=f_1(x) = \delta_1 \sum_{y\in \bX}\pi_{xy}V(y).
	\end{equation*}
\end{itemize}

In all three cases, $V(x) \ge f_1(x)$ for $x\in \{a,b,c\}$. In particular, $V(b)\ge K$. If $p_a \in (0,1]$, the necessary condition $V(a) = f_1(a) = 1$ will contradict $ 1 \ge \delta_1 (\pi_{aa} +  \pi_{ab} V(b)) \ge \delta_1 (\pi_{aa} +  \pi_{ab} K)$ when $K$ is large. Similarly, if $p_c \in (0,1]$, the condition $V(c) = f_1(c) = 1 \ge \delta_1 (\pi_{ca} V(a) +  \pi_{cb} V(b) + \pi_{cc} V(c))$ also leads to a contradiction. It remains to check the following three cases.

\emph{Case 1.} We have $p_a = 0, p_c = 0$, and $p_b = 1$. The follower's value function satisfies
\begin{align*}
W(a) &= \max  \left\{K, \delta_2 (\pi_{aa}W(a) + \pi_{ab} W(b)) \right\},\\
W(b) &= 2,\\
W(c) &= \max  \left\{K^2, \delta_2  (\pi_{ca}W(a) + \pi_{cb} W(b) + \pi_{cc} W(c))\right\},
\end{align*} 
which leads to $V(a) = g_1(a) = K^2, V(c) = g_1(c) = 2$. However, it contradicts $V(b) = f_1(b) = K \ge \delta_1 (\pi_{ba}V(a)+\pi_{bb}V(b)+\pi_{bc}V(c))$ when $K$ is large.

\emph{Case 2.} We have $p_a = 0, p_c = 0$, and $p_b = 0$. The follower's value function satisfies
\begin{align*}
W(a) &= \max  \left\{K, \delta_2 (\pi_{aa}W(a) + \pi_{ab} W(b)) \right\},\\
W(b) &= \max  \left\{2, \delta_2  (\pi_{ba}W(a) + \pi_{bb} W(b) + \pi_{bc} W(c)) \right\},\\
W(c) &= \max  \left\{K^2, \delta_2  (\pi_{ca}W(a) + \pi_{cb} W(b) + \pi_{cc} W(c))\right\},
\end{align*} 
which leads to 
\begin{align*}
V(a) &= \delta_1 (\pi_{aa}V(a)+\pi_{ab}V(b)),\\ 
V(b) &= \delta_1 (\pi_{ba}V(a)+\pi_{bb}V(b)+\pi_{bc}V(c)),\\ 
V(c) & = 2.
\end{align*} The solution for $V(b)$ in the above equation does not satisfy the necessary condition $V(b) \ge f_1(b) = K$ for large $K$.

\emph{Case 3.} We have $p_a = 0, p_c = 0$, and $p_b \in (0,1)$. The follower's value function satisfies
\begin{align*}
W(a) &= \max  \left\{K, \delta_2 (\pi_{aa}W(a) + \pi_{ab} W(b)) \right\},\\
W(b) &= 2 p_b + (1-p_b) \delta_2 (\pi_{ba}W(a)+\pi_{bb}W(b)+\pi_{bc}W(c)),\\
W(c) &= \max  \left\{K^2, \delta_2  (\pi_{ca}W(a) + \pi_{cb} W(b) + \pi_{cc} W(c))\right\}.
\end{align*} 
Since $K^2$ is large, $W(x) \le K^2$ for all $x \in \{a,b,c\}$. We have $W(c) =K^2$ and $V(c) = 2$. Then depending on whether or not the follower stops, we have either
\begin{align*}
V(a) &= g_1(a) = K^2,\\ 
V(b) &= K = \delta_1 (\pi_{ba}V(a)+\pi_{bb}V(b)+\pi_{bc}V(c)),\\ 
V(c) & = 2
\end{align*} 
or
\begin{align*}
V(a) &= \delta_1 (\pi_{aa}V(a)+\pi_{ab}V(b)),\\ 
V(b) &= K = \delta_1 (\pi_{ba}V(a)+\pi_{bb}V(b)+\pi_{bc}V(c)),\\ 
V(c) & = 2
\end{align*} 
holds. It is easy to see that when $K$ is large enough neither equation system holds.
\end{proof}

\subsection{Proofs in Section \ref{sec:regular}}

\begin{proof}[Proof of Lemma \ref{lemma:convergence}]For any $x\in \bX$,
    \begin{align*}
        \left|\bE_x[p^n_{X_1} \phi^n(X_1)]-\bE_x[p_{X_1} \phi(X_1)]\right| \le &\left|\bE_x[p^n_{X_1} \phi^n(X_1)]-\bE_x[p^n_{X_1} \phi(X_1)]\right| + \left|\bE_x[p^n_{X_1} \phi(X_1)]-\bE_x[p_{X_1} \phi(X_1)]\right|\\
        \le &\bE_x[p^n_{X_1} |\phi^n(X_1)-\phi(X_1)|] + \left|\bE_x[p^n_{X_1} \phi(X_1)]-\bE_x[p_{X_1} \phi(X_1)]\right|\\
        \le & \bE_x[|\phi^n(X_1)-\phi(X_1)|] + \left|\int_{\bX} (p^n_y -p_y)\phi(y)\pi(x,y)\mu(dy) \right|.
    \end{align*}
    Since $\phi^n(x) \to \phi(x)$ for a.e.~$x \in \bX$, by Dominant Convergence Theorem we have that the first term satisfies \[
    \lim_{n \to \infty} \bE_x[|\phi^n(X_1)-\phi(X_1)|]  = 0.
    \] 
    Since $\sup_{x\in \bX} |\phi(x)|<\infty$ and  $\pi(x, \cdot) \in L^1(\bX, \mu)$, the weak-* convergence of $\{p^n\}_{n \in \bN}$ implies that the second term also converges to $0$, i.e.,
    \[
    \lim_{n \to \infty} \left|\int_{\bX} (p^n_y -p_y)\phi(y)\pi(x,y)\mu(dy) \right| = 0.
    \]
\end{proof}

\begin{proof}[Proof of Proposition \ref{prop:continuous_wc}]
    Thanks to Lemma~\ref{lemma:r_star}, for each $p \in \cR_0$, the follower's continuation value $W^{\lambda}_C(x,p)$ admits the representation
    \begin{equation}\label{eq:wc_decompse_1}
        W^{\lambda}_C(x, p) = \sup_{q \in \cR_0} \lambda \cH(q_x) +  q_xf_2(x) + (1-q_x)\sum_{t = 1}^{\infty} \delta_2^t H_t(x, p, q).
    \end{equation}
    Here $H_t(x, p, q)$ is defined by \begin{equation*}
        H_t(x,p,q) = \bE_x\left[\left(\prod_{k = 1}^{t-1} (1-q_{X_k})(1-p_{X_k})\right)\left(p_{X_t}W^{\lambda}_S(x) + (1-p_{X_t})\left(q_x f_2(x) +\lambda \cH(q_x)\right)\right)\right],
    \end{equation*}
    with the convention that the empty product $\prod_{k = 1}^{t-1} (1-q_{X_k})(1-p_{X_k})$ equals one when $t = 1$.

    To analyze continuity in $p$, we introduce a finite-horizon truncation of $W^{\lambda}_C$ as follows. For $T \ge 1$, define
    \begin{equation*}
        W^{\lambda,T}_C(x,p) := \sup_{q_t\in \cR_0, t \in [[0, T-1]]}\lambda \cH(q_{0,x}) +  q_{0,x}f_2(x) + (1-q_{0,x})\left( \delta_2^T H_T(x) + \sum_{t = 1}^{T-1} \delta_2^t H_t(x, p, q_t) \right),
    \end{equation*}
where $H_T(x) := \bE_x\left[\left(\prod_{k = 1}^{T-1} (1-q_{X_k})(1-p_{X_k})\right) h_2(X_T)\right]$.

In the finite-horizon problem, the follower's policy $q_t$ is allowed to depend on time. In the infinite-horizon setting, since the follower's problem is stationary when the leader adopts a Markov strategy $p\in \cR_0$, the optimal value \eqref{eq:wc_decompse_1} can equivalently be characterized as
\begin{equation*}
        W^{\lambda}_C(x, p) = \sup_{q_t \in \cR_0, t\in \bN_0} \lambda \cH(q_{0,x})) +  q_{0,x}f_2(x) + (1-q_{0,x})\sum_{t = 1}^{\infty} \delta_2^t H_t(x, p, q_{t ,x}).
    \end{equation*}

Since $f_2,g_2$ and $h_2$ are bounded functions on $\bX$, there exists a constant $M>0$ such that, for any $T \in \bN$, \[
\left|W^{\lambda,T}_C(x,p) - W^{\lambda}_C(x,p) \right| \le \sum_{t = T}^{\infty} \delta_2^t M = \frac{\delta_2^T M}{1-\delta_2}, \quad \forall x \in \bX, p \in \cR_0.
\]
Consequently, to establish the continuity of $W^{\lambda}_C(x,p)$ w.r.t. $p$, it suffices to show that for each $x \in \bX$ and  $T \in \bN$,
\begin{equation}\label{eq:WT_continuous}
    p^n \xrightarrow{w*} p \quad \Longrightarrow \quad W^{\lambda,T}_C(x, p^n) \xrightarrow{ } W^{\lambda,T}_C(x,p).
\end{equation}

We prove \eqref{eq:WT_continuous} by induction. When $T = 1$,\[
W^{\lambda,T}_C(x,p) = \sup_{q_{0,x} \in [0,1]}\lambda \cH(q_{0,x}) +  q_{0,x}f_2(x) + (1-q_{0,x}) \delta_2 \bE_x[h_2(X_1)], 
\]
which is independent of $p$. Thus \eqref{eq:WT_continuous} holds trivially.
Assume that \eqref{eq:WT_continuous} holds for $T = 1, \cdots, l$. Now consider $T = l+1$ for $l\ge 1$.

By classical DPP, $W^{\lambda,l+1}_C(x,p)$ satisfies the following Bellman equation.
\begin{equation*}
    W_C^{\lambda,l+1}(x,p) = \sup_{q_{0,x} \in [0,1]}\lambda \cH(q_{0,x}) +  q_{0,x}f_2(x) + (1-q_{0,x}) \delta_2 \bE_x\left[p_{X_1}W^{\lambda}_S(X_1) + (1-p_{X_1})W^{\lambda,l}_C(X_1,p)\right].
\end{equation*}
The above supremum is attained at 
\begin{equation*} 
q^*_{0,x} :=q^*_{0,x} (p):= \frac{1}{1+ \exp \left(\frac{\delta_2 \bE_x[p_{X_1}W^{\lambda}_S(X_1) + (1-p_{X_1})W^{\lambda,l}_C(X_1,p)]- f_2(x)}{\lambda}\right)}.
\end{equation*} 
Then we have 
\begin{equation*}
W^{\lambda,l+1}_C(x, {p})  = f_2(x) + \lambda \log\left(1+\exp\left(\frac{\delta_2 \bE_x[p_{X_1}W^{\lambda}_S(X_1) + (1-p_{X_1})W^{\lambda,l}_C(X_1,p)]- f_2(x)}{\lambda}\right)\right).
\end{equation*}
Thanks to Lemma~\ref{lemma:convergence} and the induction hypothesis, $\bE_x[p_{X_1}W^{\lambda}_S(X_1)]$ and $ \bE_x[(1-p_{X_1})W^{\lambda,l}_C(X_1,p)]$ are continuous w.r.t.~$p$. Consequently, $W^{\lambda,l+1}_C(x, {p})$ is continuous w.r.t.~$p$.
\end{proof}

\begin{proof}[Proof of Proposition \ref{prop:equilibrium_characterization}]
    Thanks to \eqref{eq:v_lambda_decompose}, for each $x \in \bX$, the condition \[
    V^{\lambda}(x,p) \ge V^{\lambda}(x, p' \oplus_1 p), \,\, \forall p' \in \cR_0
    \]  
    holds,
    if and only if  \begin{equation*}
        p_x V_S^{\lambda}(x) + (1-p_x)V^{\lambda}_C(x,p) = \max_{p'_x \in [0,1]} p'_x V_S^{\lambda}(x) + (1-p'_x)V^{\lambda}_C(x,p), 
    \end{equation*}
    which is equivalent to 
    \[
    (1- p_x )\one_{\{V_S^{\lambda}(x) > V_C^{\lambda}(x,p\}} = p_x \one_{\{V_S^{\lambda}(x) < V_C^{\lambda}(x,p\}}  = 0.
    \]
     
    If $p\in \Psi(p)$, then
    \[
    (1- p_x )\one_{\{V_S^{\lambda}(x) > V_C^{\lambda}(x,p\}} = p_x \one_{\{V_S^{\lambda}(x) < V_C^{\lambda}(x,p\}} = 0, \quad \mu\text{-a.s.},
    \]
    since $(1- p_x )\one_{\{V_S^{\lambda}(x) > V_C^{\lambda}(x,p\}} \ge 0$ and $ p_x \one_{\{V_S^{\lambda}(x) < V_C^{\lambda}(x,p\}} \ge 0$.
    \end{proof}

\begin{proof}[Proof of Proposition \ref{prop:continuous_vc}]
Given the follower's best response \eqref{eq:q_star}, the leader's continuation value $V^{\lambda}_C$ admits the following representation. For each $x \in \bX, p \in \cR_0$,
\begin{equation*}
    V^{\lambda}_C(x,p) = q^*_x(p)g_1(x) + (1-q^*_x(p))\sum_{t = 1}^{\infty} \delta_1^t I_t(x,p),
\end{equation*}
where $I_t$ is defined by
\begin{equation*}
    I_t(x,p) := \bE_x\left[\left(\prod_{k = 1}^{t-1}\left(1-q^*_{X_k}(p)\right)\left(1-p_{X_k}\right)\right) \left(p_{X_t} V_S^{\lambda}(X_t) + (1-p_{X_t})q^*_{X_t}(p) g_1(X_t)\right)\right].
\end{equation*}
To establish continuity of $V^{\lambda}_C$ w.r.t.~$p$, it suffices to show that for each $x \in \bX$ and  $t \in \bN$,
\begin{equation}\label{eq:It_continuous}
    p^n \xrightarrow{w*} p \quad \Longrightarrow \quad I_t(x, p^n) \xrightarrow{ } I_t(x,p).
\end{equation}

We prove \eqref{eq:It_continuous} by induction. When $t = 1$,\[
I_t(x,p) = \bE_x[p_{X_1}V^{\lambda}_S(X_1) + (1-p_{X_1})q^*_{X_1}(p)g_1(X_1)].
\]
By Lemma~\ref{lemma:convergence} and Corollary~\ref{cor:continuous_qstar}, the map $p \mapsto I_1(x, p)$ is continuous.
Assume that \eqref{eq:It_continuous} holds for $t = 1, \cdots, l$. Now consider $t = l+1$ for $l\ge 1$.

Conditioning on the $\sigma$-algebra generated by $X_1$, we obtain
\begin{equation*}
I_{l+1}(x,p) = \bE_x\left[\left(1-q^*_{X_1}(p)\right)(1-p_{X_1})I_{l}(X_1,p)\right].
\end{equation*}
Due to the induction hypothesis and Corollary~\ref{cor:continuous_qstar}, \[
p^n\xrightarrow{w*}p \Longrightarrow I_{l}(x,p^n)\left(1-q^*_{X_1}(p^n)\right) \to I_l(x, p)\left(1-q^*_{X_1}(p)\right), \, \forall x \in \bX.
\]
Applying Lemma~\ref{lemma:convergence} again completes the proof. 
\end{proof}

\begin{proof}[Proof of Theorem \ref{thm}]
By Proposition~\ref{prop:equilibrium_characterization} and Remark~\ref{rmk:mu_null}, any fixed-point $p \in \Psi(p)$ can be modified on a $\mu$-null set to yield a regular randomized equilibrium in the sense of Definition~\ref{def:infinite_equilibrium}. Henceforth, it suffices to prove the existence of a fixed point of $\Psi$.
Since $\cR_0$ is a non-empty, compact and convex subset of $L^{\infty}(\bX,\mu)$, we can apply Kakutani's theorem to prove the existence of fixed point.

For each fixed $p \in \cR_0$, $V^{\lambda}_S(\cdot)$ and $V^{\lambda}_C(\cdot,p)$ are measurable  and bounded functions on $\bX$, set $\tilde p_{\cdot} = \one_{\{V_S^{\lambda}(\cdot) \ge V_C^{\lambda}(\cdot,p) \}} \in \cR_0$. Then $\tilde p \in \Psi(p)$ and hence $\Psi(p)$ is non-empty. It is straightforward that if $\tilde p, \hat p \in \Psi(p)$, then \[\alpha \tilde p + (1-\alpha)\hat p \in \Psi(p), \quad \alpha \in [0,1].\] which means that $\Psi(p)$ is convex for all $p \in \cR_0$.

Next we show that for any $p^n, \tilde{p}^n, p, \tilde{p}  \in  \cR_0$ with $p^n \xrightarrow{w*} p$ and $\tilde{p}^n \xrightarrow{w*} \tilde{p}$, if $\tilde{p}^n \in \Psi(p^n)$, then $\tilde{p} \in \Psi(p)$. Equivalently, we need to show that \begin{equation} \label{eq:condition1}
    \int_{\bX} \tilde p_x \one_{\{V_S^{\lambda}(x) < V^{\lambda}_C(x,p)\}} \mu(dx) = 0,
\end{equation}
and 
\begin{equation} \label{eq:condition2}
    \int_{\bX} (1-\tilde p_x) \one_{\{V_S^{\lambda}(x) > V^{\lambda}_C(x,p)\}} \mu(dx) = 0.
\end{equation}

For the fixed $p \in \cR_0$, define\[
A := A_p := \left\{y: V_S^{\lambda}(y) < V^{\lambda}_C(y,p) \right\},
\]
which is a measurable subset of $\bX$.

Since $\lim_{n \to \infty} V^{\lambda}_C(x, p^n) = V^{\lambda}_C(x, p)$, if $V_S^{\lambda}(x) < V^{\lambda}_C(x,p)$, then for large $n$, $V_S^{\lambda}(x) < V^{\lambda}_C(x,p^n)$. Consequently, for $ x \in A$,
\[
\lim_{n \to \infty}\tilde p^n_x\left(\one_{\{V_S^{\lambda}(x) < V^{\lambda}_C(x,p^n)\}} - \one_{\{V_S^{\lambda}(x) < V^{\lambda}_C(x,p)\}} \right)=  0, 
\]
By Dominant Convergence Theorem,
\[
\lim_{n\to \infty} \int_A \tilde p^n_x\left(\one_{\{V_S^{\lambda}(x) < V^{\lambda}_C(x,p^n)\}} - \one_{\{V_S^{\lambda}(x) < V^{\lambda}_C(x,p)\}} \right) \mu(dx) = 0.
\]
For $ x \notin A$, by definition,
\[
p^n_x\left(\one_{\{V_S^{\lambda}(x) < V^{\lambda}_C(x,p^n)\}} - \one_{\{V_S^{\lambda}(x) < V^{\lambda}_C(x,p)\}}\right) \ge 0, \quad \forall n\in \bN.
\]
By Fatou's Lemma,
\begin{align*}
&\liminf_{n \to \infty} \int_{A^c}\tilde p^n_x\left(\one_{\{V_S^{\lambda}(x) < V^{\lambda}_C(x,p^n)\}} - \one_{\{V_S^{\lambda}(x) < V^{\lambda}_C(x,p)\}}\right)  \mu(dx)\\
&\ge \int_{A^c} \liminf_{n \to \infty}\tilde p^n_x\left(\one_{\{V_S^{\lambda}(x) < V^{\lambda}_C(x,p^n)\}} - \one_{\{V_S^{\lambda}(x) < V^{\lambda}_C(x,p)\}}\right)  \mu(dx) 
\ge 0.
\end{align*}

Therefore, we obtain\begin{equation}\label{eq:liminf}
\liminf_{n \to \infty} \int_{\bX}\tilde p^n_x\left(\one_{\{V_S^{\lambda}(x) < V^{\lambda}_C(x,p^n)\}} - \one_{\{V_S^{\lambda}(x) < V^{\lambda}_C(x,p)\}}\right)  \mu(dx) \ge 0
\end{equation}
Since $\tilde p^n \in \Psi(p^n)$,
\[
\int_{\bX} \tilde p^n_x \one_{\{V_S^{\lambda}(x) < V^{\lambda}_C(x,p^n)\}} \mu(dx) = 0, \quad \forall  n \in \bN.
\]
Using $\tilde p^n \xrightarrow{w*} p$, the above inequality~\eqref{eq:liminf} leads to \[
0 \leq \int_{\bX} \tilde p_x \one_{\{V_S^{\lambda}(x) < V^{\lambda}_C(x,p)\}} \mu(dx)=\limsup_{n \to \infty}\int_{\bX} \tilde p^n_x \one_{\{V_S^{\lambda}(x) < V^{\lambda}_C(x,p)\}} \mu(dx) \le 0,
\]
which proves condition~\eqref{eq:condition1} must hold. By similar arguments, condition~\eqref{eq:condition2} also holds.
\end{proof}

\begin{proof}[Proof of Proposition \ref{prop:approximation}]

For any fixed $\epsilon >0$, choose $\lambda <\frac{(1-\delta_2)\epsilon}{\log 2}$. By Theorem~\ref{thm}, there exists a regular equilibrium $p \in \cR_0$ for the entropy regularized Stackelberg stopping game \eqref{eq:game_regular_equilibrium} with parameter $\lambda$. 
By Definition \ref{def:regular_equilibrium}, the strategy pair $(p, (r,q))$ with $(r,q) := (r^{*,\lambda}, q^{*\lambda}(p))$ as defined in \eqref{eq:r_star} and \eqref{eq:q_star},  satisfies that for all $ x \in \bX$,
\begin{align*}
 &J^{\lambda}_2(x, {p}, (r, q)) \ge J^{\lambda}_2(x, p, (r',q')), \quad \forall (r',q') \in \cR_0^2;\\
  &J_1(x, {p}, (r, q)) \ge J_1(x, p' \oplus_1 p, (r, q)), \quad \forall p' \in \cR_0.
\end{align*}

Therefore, condition \eqref{eq:j1condition_epsilon} is satisfied. It remains to show that condition \eqref{eq:j2condition_epsilon}
also holds.

Since the difference between $J_2$ and $J_2^{\lambda}$, the regularization term, is bounded by $\lambda\sum_{t  = 0}^{\infty}\delta_2^t |\cH(q_t) -\cH(q'_t)| \le \epsilon$, we obtain
\begin{equation*}
J_2(x, p, (r, q)) \ge J_2(x, p, (r', q')) - \epsilon, \quad \forall x \in \bX, \forall (r',q') \in \cR_0^2,
\end{equation*}
which implies that $(p,(r,q))$ is an $\epsilon$-equilibrium.
\end{proof}

\subsection{Proof of Section \ref{sec:special_case}}

\begin{proof}[Proof of Proposition \ref{prop:feedback}]
Thanks to \eqref{eq:r_star_new}, \eqref{eq:vs_limit}, and Proposition~\ref{prop:feedback_char}, it suffices to show that  the sequence $\left(p^n,\; q^{*,\lambda_n}(p^n)\right)$ admits a limit point $(p^*, q^*) \in [0,1]^N \times [0,1]^N$ that satisfies  \eqref{eq:j2condition} and \eqref{eq:j1condition}.

Consider a sequence $\{\lambda_n\}_{n \in \bN}$ that satisfies $\lim_{n \to \infty} \lambda_n = 0$. By Theorem~\ref{thm}
 {\footnote{Assumption~\ref{asp:reference_prob} holds when the state space is finite. We can simply choose the reference distribution to be $\mu(x) = 1/N, \forall x \in \bX$ and the kernel to be $\pi(x,y) = N\bP(X_{t+1}= y | X_t = x ), \forall x,y \in \bX$.}}
 ,
 for each $\lambda_n$, there exists a regular randomized equilibrium $p^n:=p^{\lambda_n} \in [0,1]^N$ for the entropy-regularized Stackelberg game with parameter $\lambda_n$.

  Let us denote the corresponding best response function of the follower by $q^n := q^{*, \lambda_n}(p^n)$. Then for each $x \in \bX$,
\begin{equation} \label{eq:qn}
q^n_x = \frac{1}{1+ \exp\left( \frac{\delta_2 \bE_x \left[ W^{\lambda_n}(X_1, p^n) \right] - f_2(x)}{\lambda_n} \right)}, \end{equation}
and 
\begin{equation} \label{eq:pn}
p^n_x = 
\begin{cases}
1, & \text{ if } V_S^{\lambda^n}(x) > V_C^{\lambda^n}(x, p^n);\\
0, & \text{ if } V_S^{\lambda^n}(x) < V_C^{\lambda^n}(x, p^n).
\end{cases}
\end{equation}
Recall that $V_S^{\lambda_n}(x)=r^n_xh_1(x) + (1-r^n_x) f_1(x)$, where $r^n = r^{*,\lambda_n}$ is given by \eqref{eq:r_star}, and \[
V^{\lambda_n}_C(x,p^n) = q^n_x g_1(x)+(1-q^n_x) \delta_1 J_1(X_1, {p^n}, (r^{*, \lambda_n},q^n)).
\]  
Since $\bX$ is finite, we can define $p^* := \lim_{n \to \infty} p^n$, up to a proper subsequence. 
Next we select a limit point $q^*$ of $\{q^n\}_{n \in \bN}$ such that the pair $(p^*,q^*)$ satisfies conditions \eqref{eq:j2condition}
and \eqref{eq:j1condition}.

Recall that for any $p \in \cR_0$,
\begin{align*}
&W^{\lambda}(x,p) = \sup_{q,r \in [0,1]^N} J_2(x, p, (r,q)) + \lambda \bE_x\left[ \sum_{t = 0}^{\tau^p \wedge \tau^{q,r}} \delta_2^t \cH(q_{X_t}  \one_{\{\tau^p>t\}} + r_{X_t}  \one_{\{\tau^p= t\}} ) \right];\\
&W(x,p) = \sup_{q,r \in [0,1]^N} J_2(x, p, (r,q)).
\end{align*}
Then\[
\left| W^{\lambda}(x,p)-W(x,p) \right| \le \sup_{q,r \in [0,1]^N} \lambda \bE_x\left[ \sum_{t = 0}^{\tau^p \wedge \tau^{q,r}} \delta_2^t \cH(q_{X_t}  \one_{\{\tau^p>t\}} + r_{X_t}  \one_{\{\tau^p= t\}} ) \right] \le \lambda \frac{2}{1-\delta_2}.
\]
Consequently, $\lim_{n \to \infty}W^{\lambda_n}(x,p) = W(x,p)$ uniformly in $p$.

Moreover, one can easily show that the Bellman operator associated with $W(x, p)$ is a contraction under the supremum norm. Then a Banach fixed-point argument implies that $W(x, p)$ is continuous w.r.t.~ $p$, where the set $[0,1]^N$ is equipped with the supremum norm $\|p\|_{\infty} := \sup_{x\in \bX} |p_x|$.

Since \[
\left|  W^{\lambda_n}(x, p^n)  - W(x, p^*)  \right| \le \left| W^{\lambda_n}(x, p^n) - W(x, p^n)  \right| + \left| W(x, p^n) - W(x, p^*)  \right|,
\]
using the uniform convergence of $W^{\lambda_n}$ and the continuity of $W(x,\cdot)$, we obtain \[
\lim_{n \to \infty} W^{\lambda_n}(x, p^n)  = W(x, p^*).
\]
Consequently, thanks to  \eqref{eq:qn}, if $\delta_2 \bE_x \left[ W(X_1, p^*) \right]  > f_2(x)$,  $\lim_{n \to \infty} q^n_x = 0$, and if $\delta_2 \bE_x \left[ W(X_1, p^*) \right]  < f_2(x)$,  $\lim_{n \to \infty} q^n_x = 1$.
If  $\delta_2 \bE_x \left[ W(X_1, p^*) \right] = f_2(x)$, by choosing a convergent subsequence of $\{q^n_x\}$, we have $\lim_{n \to \infty} q^{n}_x \in [0,1]$. Hence the limit point \[
\quad q^*_x =  \lim_{n \to \infty} q^n_x, \quad \forall x \in \bX,
\]
satisfies that 
\begin{equation*}
q^*_x  f_2(x)+ (1-q^*_x )\delta_2 \bE_x \left[ W(X_1, p^*) \right]  = \max_{q_x\in [0,1]} q_x f_2(x)+ (1-q_x)\delta_2 \bE_x \left[ W(X_1, p^*) \right]  , \quad \forall x \in \bX,
\end{equation*}
which implies that  the pair $(p^*,q^*)$ satisfied condition~\eqref{eq:j2condition}. It remains to verify that the pair $(p^*,q^*)$ satisfies condition~\eqref{eq:j1condition}.

By definition, $J_1(x, \cdot, \cdot)$ is continuous w.r.t.~$p, q$ and $r$. Therefore,\[
J_1(x, {p^n}, (r^{*, \lambda_n}, q^n) )\to  J_1(x, {p^*}, (r^*,q^*)),
\]
and 
\[
\lim_{n \to \infty} V_C^{\lambda_n}(x, p^n) =\lim_{n \to \infty}  q^n_x g_1(x)+(1-q^n_x) \delta_1 J_1(x, {p^n}, (r^{*, \lambda_n}, q^n)) = q^*_x g_1(x)+(1-q^*_x) \delta_1 J_1(x, {p^*}, (r^*, q^*))  = V_C(x, p^*). 
\]
 Using $\lim_{n \to \infty} V_C^{\lambda_n}(x, p^n) = V_C(x, p^*), \lim_{n \to \infty} V_S^{\lambda_n}(x) = V_S(x)$ (see \eqref{eq:vs_limit}) and \eqref{eq:pn}, we conclude that 
\begin{equation*}
p^*_x  = \lim_{n \to \infty}p^n_x = 
\begin{cases}
1, & \text{ if } V_S(x) > V_C(x, p^*);\\
0, & \text{ if } V_S(x) < V_C(x, p^*),
\end{cases}
\end{equation*}
which implies that the pair $(p^*,q^*)$ satisfies condition~\eqref{eq:j1condition}.
\end{proof}

\begin{proof}[Proof of Theorem \ref{thm:strict_condition}]
For each $p \in \bX$, define\[
q^*_x(p) = \one_{f_2(x)\ge \delta_2\bE_x[p_{X_1}W_S(X_1) + (1-p_{X_1})W_C(X_1,p)]}.
\]
Then we have that for a.e. $x \in \bX$, $q^*_x \in \{0,1\}$. Specifically, \[
q^*_x(p) = \begin{cases}
    1, & \text{ if } f_2(x) > \delta_2\bE_x[p_{X_1}W_S(X_1) + (1-p_{X_1})W_C(X_1,p)]\\
    0, & \text{ if } f_2(x) < \delta_2\bE_x[p_{X_1}W_S(X_1) + (1-p_{X_1})W_C(X_1,p)]
\end{cases}\,\,, \quad \text{a.e. } x \in \bX.
\]
Following the same argument, the continuity result in Proposition \ref{prop:continuous_wc} also holds for $W_C$. Consequently, \[
p^n \xrightarrow{w*} p \quad \Rightarrow  \quad q^{*}_x(p^n) \to q^{*,\lambda}_x(p), \quad \text{a.e. } x \in \bX. 
\]
Again following the same fixed-point arguments in the proof of Theorem \ref{thm}, we obtain the existence of the exact equilibrium.
\end{proof}

\begin{proof}[Proof of Proposition \ref{cor:example}]
Thanks to Theorem \ref{thm:strict_condition}, it suffices to show that $\mu(A(p)) = 0$ for all $p \in \cR_0$.
Denote \[
H^p(y):= p_yW_S(y)+ (1-p_y)W_C(y,p), \quad \forall p\in \cR_0, y \in \bX.
\]
Then we have that \[
W_C(x,p) = \max\{f_2(x), \delta_2 \bE_x[H^p(X_1)]\},
\]
where
\begin{equation}\label{ft}
\bE_x[H^p(X_1)] = \bE[H^p(xe^{\beta + \sigma Z_1})] = \int_{-\infty}^{\infty} H^p(e^y) \frac{1}{ \sqrt{2\pi}\sigma}e^{-\frac{(y - \log x -\beta)^2}{2\sigma^2}} dy
\end{equation}
can be viewed as the convolution of $H^p(e^y)$ and the Gaussian density function w.r.t.~$\log x + \beta$. Consequently, it is real analytic on $(0, \infty)$.

Now suppose for some $p \in \cR_0$,
$$\mu(A(p))=\mu(\{x: C_0 = \delta_2\bE_x[H^p(X_1)]\}) > 0.$$
Then by Identity theorem, $\delta\bE_x[H^p(X_1)]=C_0$ for any $x\in\X$ and 
\[
W_C(x,p) = \max\{C_0, \delta_2\bE_x[H^p(X_1)]\}= C_0, \quad \forall x \in \bX.
\]
As $\bE_x[H^p(X_1)] = C_0/\delta_2$ for any $x \in \bX$, from \eqref{ft} and the theory of Fourier transform, $H^p(y) = C_0/\delta_2$ for all $y \in \bX$.
By the definition of $H^p$, we have \[
p_y W_S(y)  + (1-p_y) C_0 = C_0/\delta_2, \quad \forall y\in \bX.
\]
We obtain that for all $y \in \bX$,\[
p_y = \frac{C_0/\delta_2 - C_0}{W_S(y) - C_0} > 1,
\]
since $W_S(y) < C_0/\delta_2$. This is a contradiction to $p_y \in [0,1]$.
\end{proof}

\bibliographystyle{plain}
\bibliography{ref}

\end{document}